\newcommand{\E}{\mathbb{E}}
\DeclareMathOperator{\tr}{\rm tr}
\DeclareMathOperator{\adj}{\rm adj}
\newtheorem{definition}{Definition}[section]
\newtheorem{proposition}[definition]{Proposition}
\newtheorem{theorem}[definition]{Theorem}
\newtheorem{remark}[definition]{Remark}
\newtheorem{lemma}[definition]{Lemma}
\begin{document}

\title{A critical drift-diffusion equation: intermittent behavior}
\author[F. Otto]{Felix Otto}
\email[F. Otto]{felix.otto@mis.mpg.de}
\author[C. Wagner]{Christian Wagner}
\email[C. Wagner]{christian.wagner@mis.mpg.de}

\maketitle

\begin{abstract}
We consider a drift-diffusion process with a time-independent and divergence-free random drift 
that is of white-noise character.
We are interested in the critical case of two space dimensions, where one has to impose a 
small-scale cut-off for well-posedness, and is interested in the marginally 
super-diffusive behavior on large scales.

\medskip

In the presence of an (artificial) large-scale cut-off at scale L, 
as a consequence of standard stochastic homogenization theory, there exist harmonic coordinates 
with a stationary gradient $F_L$; the merit of these coordinates being that
under their lens, the drift-diffusion process turns into a martingale.

\medskip

It has recently been established that the second moments diverge as 
$\mathbb{E}|F_L|^2\sim\sqrt{\ln L}$ for $L\uparrow\infty$.
We quantitatively show that in this limit, and in the regime of small P\'eclet number,
$|F_L|^2/\mathbb{E}|F_L|^2$ is not equi-integrable, 
and that $\mathbb{E}|{\rm det}F_L|/\mathbb{E}|F_L|^2 $ is small. 
Hence the Jacobian matrix of the harmonic coordinates is very peaked and non-conformal.

\medskip

We establish this asymptotic behavior by characterizing a proxy $\tilde F_L$ 
introduced in previous work as the solution of an It\^{o} SDE w.~r.~t.~the variable $\ln L$, 
and which implements the concept of a scale-by-scale homogenization 
based on a variance decomposition and
admits an efficient calculus. For this proxy, 
we establish $\mathbb{E}|\tilde F_L|^4\gg(\mathbb{E}|\tilde F_L|^2)^2$ 
and $\mathbb{E}({\rm det}\tilde F_L-1)^2\ll 1$. 
In view of the former property,
we assimilate this phenomenon to intermittency. In fact, $\tilde F_L$
behaves like a tensorial stochastic exponential, and as a field can be assimilated
to multiplicative Gaussian chaos.
\end{abstract}

\tableofcontents

\section{Introduction}

\subsection{Drift-diffusion process $X$, Gaussian ensembles of divergence-free drifts $b$, 
and critical dimension $d=2$ for decorrelated ensembles}

We are interested in a drift-diffusion process in $d$-dimensional space,
\begin{align}\label{cw01}
dX_t=b(X_t)dt+\sqrt{2}dW_t\quad\mbox{and}\quad X_{t=0}=0
\end{align}
with a time-independent and divergence-free drift $b$. The factor of $\sqrt{2}$ is chosen such that
the expectation of an observable $u$ evolves according to $\frac{d}{dt}\mathbb{E}u(X_t)$
$=\mathbb{E}{\mathcal L}u(X_t)$, where
\begin{align}\label{cw03}
{\mathcal L}u:=b\cdot\nabla u+\triangle u
\end{align}
is the generator of the Markov process $X$. 
The assumption $\nabla\cdot b=0$ ensures that the uniform distribution
$\rho=const$ is in the null space of the adjoint
${\mathcal L}^*\rho=-\nabla\cdot\rho b+\triangle\rho$.
This in particular rules out traps, i.~e.~local configurations of $b$ that make it unlikely
for $X$ to escape within moderate time.

\medskip

We are interested in vector fields $b$ 
that are randomly chosen from a stationary and centered Gaussian ensemble.
Such an ensemble is characterized by the (positive semi-definite)
Fourier transform\footnote{where we use the physicist's normalization 
$\mathcal{F}c(k)=\int{\frac{dx}{\sqrt{2\pi}^d}} 
\exp ( - i k \cdot x )c(x)$} 
${\mathcal F}C(k)$ of the covariance tensor 
$C(x-y)=\mathbb{E}b(x)\otimes b(y)$.
Loosely speaking, the most decorrelated choice of ensemble is given by
%
\begin{align}\label{cw02}
{\mathcal F}C(k)=\varepsilon^2({\rm id}-\frac{k\otimes k}{|k|^2}),
\end{align}
which can be interpreted as the Leray projection\footnote{the orthogonal projection
onto the space of a divergence-free fields w.~r.~t.~$L^2(\mathbb{R}^d)^d$,
which is the Cameron-Martin space of vectorial white noise} of vectorial white noise,
and where we give ourselves the freedom of choosing some amplitude $\varepsilon\ge 0$. 

\medskip

Does this process share the parabolic scale invariance in law
of the Wiener process $W$, namely $\lambda W_t=_{law}W_{\lambda^2 t}$?
We note that we have\footnote{In this context, ``annealed law'' refers to the expectation 
w.~r.~t.~both the environment $b$ and the thermal noise $W$.} 
$\lambda X_t=_{annealed\;law}X_{\lambda^2 t}$
provided $b$ transforms in law like a velocity under this parabolic rescaling, that is, 
$b(\lambda x)$ $=_{law}\frac{\lambda}{\lambda^2} b(x)$ $=\lambda^{-1} b(x)$.
On the other hand, it follows from (\ref{cw02}) that $b$ scales with the inverse square-root
of the volume\footnote{Treating $\varepsilon^2$ as non-dimensional (as will soon be justified),
${\mathcal F}C$ is non-dimensional, hence $C$ has units of inverse volume, and thus
$b$ has units of its square-root.}, that is, 
$b(\lambda x)$ $=_{law}\frac{1}{\sqrt{\lambda^d}} b(x)$ $=_{law}\lambda^{-d/2} b(x)$.
Hence we learn that for the ensemble (\ref{cw02}), $d=2$ is a critical dimension.

\medskip

However, for $d\ge 2$, the ensemble (\ref{cw02}) of $b$'s is too rough to give a sense to
the generator (\ref{cw03}). Indeed, the above scaling in law indicates that every
realization of $b$ is a Schwartz distribution of regularity no better than $-\frac{d}{2}$; in fact,
by the nature of the Kolmogorov criterion, its quenched regularity is slightly worse\footnote{
like Brownian motion $W$ has a H\"older regularity slightly worse than $\frac{1}{2}$},
as indicated by writing $-\frac{d}{2}-$. Hence even if the function $u$ were smooth,
$b\cdot\nabla u$ would have typical regularity of $-\frac{d}{2}-$.
As a consequence, a typical solution of ${\mathcal L}u=0$ would have at best regularity 
of $2-\frac{d}{2}-$.
Even appealing to $\nabla\cdot b=0$ to rewrite $b\cdot\nabla u$ $=\nabla\cdot ub$,
these regularities would not be sufficient to give a sense to the product $ub$, since the sum
$2-d-$ of the regularities of the factors, $2-\frac{d}{2}-$ and $-\frac{d}{2}-$, 
is negative. Also this argument reveals the criticality of $d=2$.

\subsection{UV cut-off, P\'eclet number $\varepsilon$, annealed second moments,
and super-diffusive behavior for $d=2$} 

In view of the preceding discussion, 
the ensemble (\ref{cw02}) has to be regularized for (\ref{cw01}) to make sense.
The simplest way is to introduce an ultra-violet (UV) cut-off scale. W.~l.~o.~g.~we 
may non-dimensionalize s.~t.~this scale is unity, so that (\ref{cw02})
is replaced by\footnote{In our notation, $I(A)\in\{0,1\}$ vanishes unless the statement $A$ holds.} 
\begin{align}\label{cw04}
{\mathcal F}C(k)=\varepsilon^2I(|k|\le 1)({\rm id}-\frac{k\otimes k}{|k|^2}).
\end{align}
As a consequence, every realization $b$ is smooth; more precisely,
it varies little on length scales $\ll 1$ and has a typical amplitude of order $\varepsilon$. 
Now that $b$ has acquired a characteristic length scale, we may associate a characteristic 
time scale to both diffusion and convection. 
In view of the unit diffusivity, and the convection with a drift of 
order $\varepsilon$, these characteristic time scales are of the order $1$ and $\varepsilon^{-1}$, 
respectively. Hence the parameter $\varepsilon$ plays the role of the P\'eclet number,
which is defined to be the ratio of both time scales. The P\'eclet number is the 
traditional measure of the relative strength of convection w.~r.~t.~diffusion.

\medskip

The presence of the UV cut-off breaks any scale invariance in law. 
It introduces non-universal behavior on length scales $\lesssim 1$ so that
in order to see whether $X$ behaves like a diffusion, we should consider $t\gg 1$.
The simplest question is to monitor the annealed second moments
$\mathbb{E}(\xi\cdot X_t)^2$ in some direction $\xi\in\mathbb{R}^d$ with $ | \xi | = 1 $;
the word ``annealed'' is to indicate that  the expectation is taken both with respect
to the thermal noise $W$ and the environmental noise $b$. Diffusive behavior means that
$\mathbb{E}(\xi\cdot X_t)^2$ scales as Brownian motion
$\mathbb{E}(\xi\cdot W_t)^2=t$.

\medskip

We digress by giving a heuristic argument that for the regularized ensemble (\ref{cw04}),
diffusive behavior of $X$ is (only) to be expected for $d>2$. 
Arguing by self-consistency, if $X$ behaves diffusively, we may think of it 
as a random walk on a unit lattice, unity being the characteristic length of $b$ 
discussed above (and the only characteristic length scale present). 
Such a random walk is transient\footnote{meaning that it does not return to the same 
lattice point after some time} iff $d>2$.
Since in view of (\ref{cw04}), $b$ decorrelates over distances $\gg 1$,
the contribution $b(X_t)dt$ in (\ref{cw01}) thus decorrelates over time. 
Hence the drift acts as a (statistically independent) diffusion, 
increasing the overall (effective) diffusivity.
This indeed can be made rigorous by the approach of Subsection \ref{ss:hom},
as done in \cite{KozmaToth17,Toth18}. 

\medskip 
 
Let us return to the critical dimension $d=2$ and
the asymptotic behavior of annealed second moments $\mathbb{E}(\xi\cdot X_t)^2$,
this time for the ensemble (\ref{cw04}).
In the physics literature,~cf.~\cite{FisherEtAl}, the (borderline) super-diffusive behavior
\begin{equation}\label{in12}
\mathbb{E}(\xi\cdot X_t)^2\sim t\sqrt{ \ln t } \quad { \rm as } ~ t \rightarrow \infty
\end{equation}
was informally derived by a renormalization group argument, 
which is reproduced in \cite[Appendix]{CMOW}. A first rigorous proof of some super-diffusive
behavior was given in \cite{TothValko12}. Up to double logarithms and modulo the Laplace
transform, the scaling (\ref{in12}) was
rigorously established in \cite{CannizaroHaunschmidtSibitzToninelli}.
The double logarithms and the Laplace transform were eliminated in \cite{CMOW}.
Moreover, precise asymptotics were identified in the regime of small P\'eclet number:
\begin{equation}\label{cw11}
\frac{\mathbb{E}(\xi\cdot X_t)^2}{2 t\sqrt{1+\frac{1}{2}\varepsilon^2\ln t}} 
\approx 1\quad\mbox{for all}\;t \ge 1
\quad\mbox{provided\footnotemark}\;\varepsilon\ll 1.
\end{equation}
\footnotetext{By the statement $A\approx B$
for $\varepsilon\ll 1$ we mean that that for every $\delta>0$ there exists an $\varepsilon_0>0$
such that $|\frac{A}{B}-1|\le\delta$ provided $\varepsilon \le \varepsilon_0$.}

\medskip

The method of \cite{TothValko12,CannizaroHaunschmidtSibitzToninelli} 
lifts the analysis to the infinite-dimensional 
Markov process of the environment as seen from the particle $X$ (as the jargon goes), 
which is induced by the generator (\ref{cw03}) when replacing spatial derivatives by what 
occasionally is called horizontal derivatives.
This has the advantage of naturally extending to a time-dependent Markovian environment 
\cite{DeLimaFeltesWeber24}. Based on the Gaussian nature of the underlying (white) noise,
the analysis of the generator relies on
the Wiener chaos decomposition of observables. This functional-analytic approach has
also been employed for stochastic PDE that define a (field-valued) stationary Markov process. 
Next to the Gaussianity of the underlying stationary measure on configuration space,
it relies on the control of the skew-symmetric part of the generator
by its symmetric one. This has been carried out for the stochastic Burgers equation\footnote{the 
KPZ equation formulated in terms of the slope} in \cite{MR4168394}.   
It has also been implemented in scaling-critical cases,
like the anisotropic KPZ equation in dimension $d=2$, see \cite{MR4642815}. 
Very recently, \cite{ABK24} reports a quenched version of (\ref{cw11})
valid for all $\varepsilon$. The method is closer to ours, see Subsection \ref{ss:scale} for a (short)
discussion.


\subsection{Intermittency of Lagrangian coordinates on large time scales}

Let us consider 
\begin{align}\label{so06}
\bar X_t(x):=E[ X_t ] \quad\mbox{where $X$ solves}\quad X_t= x + \int_0^t ds \, b(X_s)+\sqrt{2}W_t
\end{align}
and where $E[ \cdot ] $ denotes the expectation w.~r.~t.~Brownian motion $W$. 
Hence $\bar X_t(x)$ is the position of a particle at time $t$ that started at $x$,
averaged w.~r.~t.~to the thermal noise $W$.
We like to think of $\bar X$ as Lagrangian coordinates; in fact, $\bar X$ arises from
considering the random dynamical system where all particles are subject to the same
thermal noise (and of course the same environment $b$), 
and then taking the expectation w.~r.~t.~the thermal noise. 

\medskip

We are interested in the Jacobian matrix $\nabla\bar X_t$ of these coordinates, 
which is a stationary tensor field, and monitor its squared Frobenius norm $|\nabla\bar X_t|^2$.
A version of (\ref{cw11}) reads\footnote{In this text, it can be inferred from
(\ref{so13}) and (\ref{mr05}) via definition (\ref{tl01}).} 
\begin{align}\label{so11}
\mathbb{E}\int_T^{2T}dt|\nabla\bar X_t|^2\approx 2
T \sqrt{1+{\textstyle\frac{1}{2}}\varepsilon^2\ln T}
\quad\mbox{for}\quad T\ge 1\;\mbox{and}\;\varepsilon\ll 1.
\end{align}
The main insight of this work is that $|\nabla\bar X_t|^2$ and even its time average
$\int_T^{2T}dt|\nabla\bar X_t|^2$ develop high peaks,
in the sense that when normalized, it is not equi-integrable w.~r.~t.~the ensemble:

\begin{theorem}\label{th:forFabio}We have for $ \varepsilon \ll 1 $ and $ \varepsilon^2 \ln T \gg 1 $
\begin{align}\label{ff03}
\E \int_{T}^{2T} dt | \det \nabla \bar X_t | \ll \E \int_T^{2T} dt | \nabla \bar X_t |^2
\end{align}
and
\begin{align}\label{so10}
\frac{\int_T^{2T}dt|\nabla\bar X_t|^2}{\mathbb{E}\int_T^{2T}dt|\nabla\bar X_t|^2}\quad
\mbox{is not equi-integrable}.
\end{align}
\end{theorem}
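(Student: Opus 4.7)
The plan is to pass from the Lagrangian gradient $\nabla\bar X_t$ to the stationary harmonic gradient $F_L$ at the diffusive scale $L\sim\sqrt T$, and then from $F_L$ to the proxy $\tilde F_L$, at which point the announced properties $\E|\tilde F_L|^4\gg(\E|\tilde F_L|^2)^2$ and $\E(\det\tilde F_L-1)^2\ll 1$ do the actual work. The proof thus factors into two reductions plus a deterministic use of the proxy's statistics.

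For the first reduction, since the harmonic coordinates turn $X$ into a martingale, one has (up to boundary effects at the cut-off scale $L$) an identification of the form $\E\int_T^{2T}dt\,|\nabla\bar X_t|^2\approx T\,\E|F_L|^2$ and $\E\int_T^{2T}dt\,|\det\nabla\bar X_t|\approx T\,\E|\det F_L|$, with $L^2\sim T$. This is the mechanism already behind (\ref{so11}) via the identities referenced at (\ref{so13})--(\ref{mr05}) and the definition (\ref{tl01}); I would re-use it in a sharpened form that also tracks tails, not just second moments. The second reduction is the proxy comparison from the authors' earlier work: for $\varepsilon\ll 1$ and $\varepsilon^2\ln L\gg 1$, $F_L-\tilde F_L$ is of lower order in the relevant norms, so $\E|F_L|^2\approx\E|\tilde F_L|^2$, $\E|\det F_L|\approx\E|\det\tilde F_L|$, and analogous tail statements.

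Estimate (\ref{ff03}) is then essentially immediate: Jensen and Cauchy--Schwarz give $\E|\det\tilde F_L|\le 1+\sqrt{\E(\det\tilde F_L-1)^2}\lesssim 1$, whereas $\E|\tilde F_L|^2\sim\sqrt{\ln L}\to\infty$; multiplying each side by $T$ and invoking the two reductions yields (\ref{ff03}). For (\ref{so10}) a bit more is required, because the inequality $\E|\tilde F_L|^4\gg(\E|\tilde F_L|^2)^2$ alone does not imply non-equi-integrability in general. Here I would use the paper's stronger description of $\tilde F_L$ as a tensorial stochastic exponential in the variable $\ln L$: in that picture $Y_L:=|\tilde F_L|^2/\E|\tilde F_L|^2$ is essentially a positive martingale whose typical value tends to $0$ in probability while $\E Y_L=1$, which is precisely the failure of equi-integrability (one has $\E[Y_L\,\mathbf{1}_{Y_L>M}]\to 1$ for every fixed $M$).

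The main obstacle is the first reduction: turning $\int_T^{2T}dt\,|\nabla\bar X_t|^2\rightsquigarrow T|F_L|^2$ and its determinant analogue into tail-level statements requires going beyond the identities used to derive (\ref{so11}) and controlling the long-time behaviour of the environment as seen from the particle on the very rare event where $|F_L|^2$ is huge. The second reduction inherits the same difficulty: standard $L^p$ comparison between $F_L$ and $\tilde F_L$ must be applied carefully, because it is precisely the rare peaks of $|F_L|^2$ that drive (\ref{so10}), so the error $F_L-\tilde F_L$ has to be controlled on those peaks rather than in an $L^2$-average sense.
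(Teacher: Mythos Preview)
Your overall strategy is right, but you are creating an obstacle that does not exist, and the concern you flag as the ``main obstacle'' is precisely the point where the paper has a clean trick you are missing.

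First, the paper does \emph{not} go through $F_L$: it compares $\nabla\bar X_t$ directly to the proxy $\tilde F_L$ at scale $L^2=T$ via the $L^2$-in-time bound~(\ref{so13}). Your two-step reduction $\nabla\bar X_t\to F_L\to\tilde F_L$ is an unnecessary detour.

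Second, and more importantly, you worry that passing to tails requires controlling the error ``on those peaks rather than in an $L^2$-average sense''. This is exactly what is \emph{not} needed. Non-equi-integrability of $D_T/\E D_T$ is equivalent to showing that for some large threshold $r$,
\[
\E\bigl[D_T\,I(D_T\le r\,\E D_T)\bigr]\le\tfrac12\,\E D_T,
\]
a \emph{lower}-tail statement. The paper transfers this from $\tilde D_T:=T|\tilde F_L|^2$ to $D_T$ via the elementary pointwise inequality
\[
D_T\,I(D_T\le\tfrac12 r)\;\le\;\tilde D_T\,I(\tilde D_T\le r)\;+\;|D_T-\tilde D_T|,
\]
which holds because if $D_T\le r/2$ but $\tilde D_T>r$ then $|D_T-\tilde D_T|>r/2\ge D_T$. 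Taking expectations, the last term is controlled by Cauchy--Schwarz and the $L^2$ bound~(\ref{so13}), giving $\E|D_T-\tilde D_T|\lesssim\varepsilon\tilde\lambda T\ll\E D_T$. No control on rare peaks is required. The first term on the right is then handled by Proposition~\ref{prop:proxy-intermittency-2}, i.e.\ (\ref{ao13}), which is the quantitative lower-tail statement for $|\tilde F_L|^2$ obtained from the backward Kolmogorov equation for the SDE of $|\tilde F|^2$; the fourth-moment asymptotic~(\ref{mr06}) alone is, as you correctly note, insufficient.

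For~(\ref{ff03}) your argument is essentially the paper's: combine~(\ref{mr04}) with the $L^2$ comparison~(\ref{so13}) (again directly to $\tilde F_L$, not via $F_L$), using Cauchy--Schwarz in $t$ to control $\E\int_T^{2T}|\det\nabla\bar X_t-\det\tilde F_L|\,dt$.
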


In fact, we obtain a more quantitative statement, see (\ref{ao13}) for a related quantity. 
We stress that this phenomenon of deterioration of the Lagrangian coordinates
can only be observed on sufficiently large time scales: 
More precisely, $\ln t$ not only has to be large w.~r.~t.~$\varepsilon^{-2}$ but it has to vary
substantially over the period of observation\footnote{An example of a sufficiently large
time interval is $t\in(\exp(\varepsilon^{-\alpha}),\exp(\varepsilon^{-\beta}))$ for $\varepsilon\ll 1$ and $ \beta > \alpha > 2 $.}. 
In fact, we also have\footnote{as a side effect of (\ref{so13}) and (\ref{mr05})}
\begin{align}\label{so12}
\mathbb{E}\int_T^{2T}dt |\nabla(\bar X_t-\fint_T^{2T}ds \bar X_s)|^2\lesssim
\varepsilon^2\mathbb{E}\int_T^{2T}dt|\nabla\bar X_t|^2,
\end{align}
which shows that to leading order in $\varepsilon\ll 1$ and in an $L^2_t$-averaged sense,
$\nabla\bar X_t$ does not depend on $t$ on any time range $(T,MT]$ with large but fixed $M$.
The discussion around (\ref{so14}) indicates that this phenomenon takes place
on spatial scales $\ell$ that are exponentially large, too: $\ln\ell\gg\varepsilon^{-2}$ . 
Incidentally, the Levy area does not seem to be a quantity where this phenomenon is
more easily detectable.

\medskip

The proof of Theorem \ref{th:forFabio} is based on the standard representation
$\xi\cdot\bar X_T(x)=u(t=0,y=x)$, where $u(t,y)$ solves the 
backwards Kolmogorov equation $-\partial_tu-{\mathcal L}u=0$
with terminal data $u(t=T,y)=\xi\cdot y$. We rewrite this as
\begin{align}\label{so09}
\xi\cdot(\bar X_t(x)-x)=v(t,x)\quad\mbox{so that}\quad\nabla(\xi\cdot\bar X_t)
=\xi+\nabla v(t,\cdot)
\end{align}
where $v(t,x) = u(T-t,x) - \xi \cdot x $ is the solution of
\begin{align}\label{so07}
\partial_tv-{\mathcal L}v=\xi\cdot b\quad\mbox{and}\quad v(t=0)=0,
\end{align}
and thus has the advantage of being stationary. The second identity in (\ref{so09}) 
translates (\ref{so11}), (\ref{so10}),
and (\ref{so12}) into statements on the solution $v$ of the PDE (\ref{so07}).

\subsection{Stream function $\psi$, elliptic coefficient field $a$, 
harmonic coordinates, corrector $\phi$}\label{ss:hom}

The method for (\ref{cw11}) from \cite{CMOW}, on which this paper is based, is much more specific to
the drift-diffusion process (\ref{cw01}). It appeals to stochastic homogenization theory for
elliptic equations in divergence form, which we review now. To the best of our knowledge, 
\cite{CMOW} was the first work to implement scale-by-scale homogenization reminiscent
of the informal renormalization group argument in \cite{FisherEtAl} for the critical problem
at hand.
The condition $\nabla\cdot b=0$ means that
the vector field $b$, seen as a $(d-1)$-form, is closed. Hence there exists a $(d-2)$-form
$\psi$ such that $b$ can be written as the exterior derivative of $\psi$. 
As a consequence of (exterior) calculus, we may represent the generator
${\mathcal L}$ as a purely second-order operator in divergence form:
\begin{align}\label{cw09}
{\mathcal L}u=\nabla\cdot a\nabla u\quad\mbox{where}\quad a:={\rm id}+\psi;
\end{align}
note that (the non-symmetric) $a$ is uniformly positive definite, 
so that ${\mathcal L}$ is elliptic.
In the case of $d=2$, $\psi$ can be identified with a scalar known as stream function, 
and (\ref{cw09}) turns into
\begin{align}\label{cw06}
b=J\nabla\psi\quad\mbox{and}\quad a:={\rm id}+\psi J\quad
\mbox{where}\quad J:=\left(\begin{array}{rr}0&-1\\1&0\end{array}\right).
\end{align}
%
%
%

\medskip

It can be easily seen that iff $d>2$, the stationary and centered Gaussian ensemble (\ref{cw04})
of $b$'s is such that there exists 
a stationary and centered Gaussian ensemble of skew symmetric matrices $\psi$.
Then it follows from qualitative homogenization theory for
stationary and ergodic ensembles of uniformly elliptic coefficient fields $a$, 
as initiated in \cite{Kozlov79,PapanicolaouVaradhan81}, 
and adapted to the present case in \cite{Oelschlaeger88},
that $X$ displays diffusive behavior. More precisely, there exists
a $\lambda\in(0,\infty)$ such that for all $\xi$
\begin{align}\label{cw08}
\lambda|\xi|^2=\lim_{t\uparrow\infty}\frac{\mathbb{E}(\xi\cdot X_t)^2}{2t}.
\end{align}
The statement (\ref{cw08}) remains valid almost surely in $b$ 
when only the expectation w.~r.~t.~$W$ is taken,
as a consequence of what is called quenched invariance principle.

\medskip

In order to establish (\ref{cw08}), one deforms
the affine coordinate $x\mapsto\xi\cdot x$ into what is called a harmonic coordinate $u$
solving ${\mathcal L}u=0$. The merit of $u$ is that $u(X) $ becomes a martingale:
\begin{align}\label{cw16}
d u(X) = \nabla u(X) \cdot dW,
\end{align}
which follows from an application of It\^{o} calculus to (\ref{cw01}).
Making the Ansatz $u(x)=\xi\cdot x+\phi(x)$, this leads to the 
PDE $ \nabla\cdot a(\xi+\nabla\phi)=0$ for what is called the corrector $\phi$.
By an adaptation of the Lax-Milgram argument, see \cite{Fehrman23} for a modern presentation,
the above-mentioned theory establishes
that there exists a unique stationary gradient field $\nabla\phi$ of finite second moments with
\begin{align}\label{cw05}
\nabla\cdot a(\xi+\nabla\phi)=0\quad\mbox{and}\quad\mathbb{E}\nabla\phi=0;
\end{align}
by stationarity of $\nabla\phi$ we may suppress the argument $x$ in
$\mathbb{E}\nabla\phi=0$.
This theory also gives a representation for $\lambda$ in (\ref{cw08}) as the expectation
of the (stationary) flux $a(\xi+\nabla\phi)$
\begin{align}\label{cw10}
\lambda\xi=\mathbb{E}a(\xi+\nabla \phi).
\end{align}
With help of (\ref{cw05}), (\ref{cw10}) can be reformulated as
$\lambda|\xi|^2$ $=\mathbb{E}(\xi+\nabla\phi)\cdot a(\xi+\nabla\phi)$. 
In view of the form (\ref{cw09}), and the second item in (\ref{cw05}), this 
yields 
\begin{align}\label{cw17}
\lambda=\mathbb{E}|\xi+\nabla\phi|^2=1+\mathbb{E}|\nabla\phi|^2\quad\mbox{provided}\;|\xi|=1.
\end{align}
In particular, a divergence-free
drift always increases the diffusivity. The stochastic homogenization theory for a 
non-divergence-free drift is much more subtle, and only established for $d>2$ and
$\varepsilon\ll 1$, see \cite{BricmontKupiainen91} for a somewhat informal and
\cite{SznitmanZeitouni06} for the rigorous proof.

\subsection{IR cut-off $L$, Gaussian free field, asymptotics of $\lambda_L$} 
As explained, in $d=2$, the ensemble (\ref{cw04}) does not allow for a
straightforward application of stochastic homogenization. In \cite{Fannjiang98},
it was suggested to quantify the super-diffusivity by monitoring the scaling
of the effective diffusivity $\lambda_L$ in an artificial large-scale (infra-red) cut-off $L$.
This approach was carried out in \cite{CMOW} in the following way:
The infra-red (IR) cut-off is implemented on the level of the ensemble by replacing 
(\ref{cw04}) by
\begin{align}\label{cw15}
{\mathcal F}C_L(k)=\varepsilon^2I(L^{-1}<|k|\le 1)({\rm id}-\frac{k\otimes k}{|k|^2}).
\end{align}
Since $|k|^2 { \rm id } -k\otimes k$ $=Jk\otimes Jk$,
this ensemble of drifts $b_L$ arises from a (stationary and centered) 
ensemble of stream functions $\psi_L$ characterized by its (scalar) covariance function 
$c_L(x-y)=\mathbb{E}\psi_L(x)\psi_L(y)$ given by its Fourier transform
\begin{align}\label{cw19}
{\mathcal F}c_L(k)=\varepsilon^2I(L^{-1}<|k|\le 1)\frac{1}{|k|^2}.
\end{align}
This ensemble is known as the Gaussian free field, with UV and IR cut-off. The necessity
of both cut-offs for stationarity becomes obvious from the logarithmic divergence in
\begin{align}\label{cw20}
\mathbb{E}\psi^2_L=c_L(0)=\varepsilon^2\int\frac{dk}{2\pi}{\mathcal F}c_L(k)
=\varepsilon^2\ln L.
\end{align}

\medskip

In the presence of a stationary ensemble of stream functions,
stochastic homogenization applies, see Subsection \ref{ss:hom}.
Setting
\begin{align}\label{ho15}
a_L:={\rm id}+\psi_L J,
\end{align}
cf.~(\ref{cw06}), there exists a unique stationary gradient field\footnote{There is
no reason the believe that $\phi_L$ itself is stationary.} $\nabla\phi_L$  
of finite second moments with
\begin{align}\label{cw14}
\nabla\cdot a_L(\xi+\nabla\phi_L)=0\quad\mbox{and}\quad\mathbb{E}\nabla\phi_L=0,
\end{align}
cf.~(\ref{cw05}). Monitoring
\begin{align*}
\lambda_L\xi=\mathbb{E}a_L(\xi+\nabla \phi_L),
\end{align*}
cf.~(\ref{cw10}), it is established in \cite{CMOW} that
\begin{align}\label{cw13}
\lambda_L\approx \sqrt{1+\varepsilon^2\ln L}
\quad\mbox{provided}\;\varepsilon\ll 1. 
\end{align}
Loosely speaking, in \cite{CMOW}, (\ref{cw11}) is derived from (\ref{cw13}) via
approximating $\mathbb{E}(\xi\cdot X_t)^2$ by $\lambda_L t$ for $L=\sqrt{t}$.


\section{Statement of result on Jacobian matrix $F={\rm id}+\nabla\phi$
of harmonic coordinates}

Let us denote by $\nabla\phi_L^i$ the solution of (\ref{cw14}) with $\xi=e^i$,
where $e^i$ denotes the Cartesian vector in direction $i=1,2$.
In this paper, we analyze the harmonic coordinates $x\mapsto u_L(x):=x+(\phi_L^1(x),\phi_L^2(x))$
of the cut-off ensemble (\ref{cw15}). More precisely, we study its stationary
Jacobian matrix field
\begin{align}\label{cw17b}
F_L:=\nabla u_L={\rm id}+(\nabla\phi_L^1,\nabla\phi_L^2)
\end{align}
%
in the limit $L\uparrow\infty$ and regime $\varepsilon\ll 1$. We already know from
(\ref{cw17}) and (\ref{cw13}) that
\begin{align}\label{cw18}
\mathbb{E}|F_L|^2=2\lambda_L\approx 2\sqrt{1+\varepsilon^2\ln L}
\quad\mbox{for}\;\varepsilon\ll 1,
\end{align}
where $|F_L|^2={\rm tr}F_L^*F_L$ denotes the squared Frobenius norm of $F_L$.

\medskip

Analogous to (\ref{cw16}), 
the harmonic coordinates $x\mapsto u_L(x)$ 
convert the process $X_L$, defined via (\ref{cw01}) with $b$ replaced by $b_L$, 
into a two-component martingale.
In view of (\ref{cw16}), the quadratic expression $ F_L^*F_L $ of its Jacobian matrix 
$F_L=\nabla u_L$ governs the quadratic variation of this martingale. 
We thus consider the study of the asymptotics of $F_L$ as a first step towards 
understanding the finer structure of the process $X$.

\medskip


\begin{theorem}[Intermittency]\label{thm:intermittency}
For any $\alpha<\frac{1}{2}$, provided\footnote{
The regime of $\varepsilon$ and $L$ in which (\ref{ao17}) holds shrinks as $\alpha \uparrow \frac{1}{2}$.} $\varepsilon\ll1$ and $\lambda_L \gg 1$ 
we have\footnote{Here and in the sequel, $A\lesssim B$ means $A\le CB$ for some
universal constant $C$, i.~e.~independent of the parameters $\varepsilon$ and $L$.} 
\begin{align}
\E | \det F_L | \lesssim\varepsilon \E | F_L |^2+1 &\ll \E | F_L |^2,\label{mr02}\\
\E I( | F_L |^2\le\lambda_L^\alpha
\E | F_L |^2) | F_L |^2 &\ll \E | F_L |^2.\label{ao17}
\end{align}
\end{theorem}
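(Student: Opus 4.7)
The plan is to route everything through the proxy $\tilde F_L$ introduced in previous work, which satisfies an Itô SDE in the scale variable $\ln L$ encoding scale-by-scale homogenization. From the SDE, Itô's formula generates linear ODEs in $\ln L$ for $\E$ of polynomial expressions in $\tilde F_L$, which in turn yield three inputs: (i) the near-isochoric bound $\E(\det\tilde F_L-1)^2\ll 1$; (ii) a fractional-moment intermittency
\begin{equation*}
\E|\tilde F_L|^{2\epsilon}\lesssim\lambda_L^{-\epsilon(1-\epsilon)/2}\,(\E|\tilde F_L|^2)^\epsilon\quad\text{for }\epsilon\in(0,1),
\end{equation*}
reflecting the stochastic-exponential / multiplicative-chaos nature of $\tilde F_L$ (for a genuine log-normal with matching mean and variance of order $\ln\lambda_L$, this is an equality up to constants); and (iii) an $L^2$-comparison $\E|F_L-\tilde F_L|^2\lesssim\varepsilon^2\,\E|F_L|^2$ that ties the proxy back to $F_L$.

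Granted (i)--(iii), the determinant bound (\ref{mr02}) reduces to a Cauchy--Schwarz computation. Starting from the algebraic identity $\det(A+B)=\det A+\tr(\adj(A)B)+\det B$ with $A=\tilde F_L$ and $B=F_L-\tilde F_L$, one obtains
\begin{equation*}
\E|\det F_L-\det\tilde F_L|\le(\E|\tilde F_L|^2)^{1/2}(\E|F_L-\tilde F_L|^2)^{1/2}+\E|F_L-\tilde F_L|^2\lesssim\varepsilon\,\E|F_L|^2,
\end{equation*}
while $\E|\det\tilde F_L|\le 1+(\E(\det\tilde F_L-1)^2)^{1/2}\lesssim 1$, and the triangle inequality closes. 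For (\ref{ao17}) I would use the pointwise inequality $Y\,I(Y\le M)\le M^{1-\epsilon}Y^\epsilon$ for $\epsilon\in(0,1)$, applied with $Y=|F_L|^2$ and $M=\lambda_L^\alpha\E|F_L|^2$. The fractional moment bound (ii), transferred from $\tilde F_L$ to $F_L$ via the sub-additivity $|y_1^\epsilon-y_2^\epsilon|\le|y_1-y_2|^\epsilon$ together with (iii), then yields
\begin{equation*}
\E|F_L|^2\,I(|F_L|^2\le\lambda_L^\alpha\E|F_L|^2)\lesssim\lambda_L^{(1-\epsilon)(\alpha-\epsilon/2)}\,\E|F_L|^2,
\end{equation*}
whose exponent is strictly negative for any $\epsilon\in(2\alpha,1)$, a range that is non-empty precisely when $\alpha<\frac{1}{2}$.

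The main obstacle is producing the fractional-moment bound (ii). The SDE is matrix-valued and coupled with $\det\tilde F_L$, so the scalar log-normal intuition does not apply verbatim; one has to exploit the stochastic-exponential structure tensorially, controlling the quadratic variation of an appropriate logarithmic martingale by $\ln\lambda_L$ and then exponentiating this gain at the level of the fractional exponent $\epsilon$. A secondary technical issue is the transfer of (ii) to $F_L$: because $y\mapsto y^\epsilon$ is only Hölder, the cheap $L^2$ estimate (iii) likely needs to be upgraded at a fourth-moment level, which in turn must be read off from the SDE.
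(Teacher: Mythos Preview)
Your determinant argument for (\ref{mr02}) is correct and essentially identical to the paper's. For (\ref{ao17}) you diverge from the paper in two places: the intermittency input for the proxy, and the transfer from $\tilde F$ to $F$. The paper does not use fractional moments; it proves the tail bound $\E I(|\tilde F|^2\le\hat r\,\E|\tilde F|^2)\,|\tilde F|^2\ll\E|\tilde F|^2$ directly via a backward-Kolmogorov duality argument: one plugs a general observable $\zeta(|\tilde F|^2)$ into the SDE, reads off the generator $\frac{r}{2}\partial_r+\frac{r^2}{4}\partial_r^2$ in $\tau=\ln\tilde\lambda^2$, and chooses $\zeta$ to solve the associated backward heat equation with cutoff terminal data. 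Your fractional-moment route corresponds to the special choice $\zeta(r)=r^\epsilon$; it is plausible and would yield the exponent you state, but establishing it rigorously still passes through the same generator identification and error control, so it is not a genuine shortcut.

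The real gap is your transfer step. From $\E|F-\tilde F|^2\lesssim\varepsilon^2\tilde\lambda$ and the sub-additivity $|y_1^\epsilon-y_2^\epsilon|\le|y_1-y_2|^\epsilon$ you obtain at best $\E|F|^{2\epsilon}\lesssim\E|\tilde F|^{2\epsilon}+(\varepsilon^2\tilde\lambda)^\epsilon$, and after multiplying by $M^{1-\epsilon}$ with $M\approx\tilde\lambda^{1+\alpha}$ the error contributes $\varepsilon^{2\epsilon}\tilde\lambda^{1+\alpha(1-\epsilon)}$, which is \emph{larger} than $\E|F|^2\approx\tilde\lambda$ once $\tilde\lambda$ is large. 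Your proposed fix, a fourth-moment bound on $F-\tilde F$, is not available: the only control on $F-\tilde F$ comes from the energy estimate for the corrector equation, which is intrinsically $L^2$. The paper sidesteps this by transferring \emph{before} any nonlinear operation, via the pointwise inequality
\[
\tfrac{1}{2}|F|^2\,I(|F|^2\le \tfrac{1}{4}r)\le|\tilde F|^2\,I(|\tilde F|^2\le r)+|F-\tilde F|^2,
\]
which upon taking expectations produces an error $\E|F-\tilde F|^2\lesssim\varepsilon^2\tilde\lambda\ll\E|F|^2$ uniformly in $\tilde\lambda$. Had you combined this transfer with your fractional-moment bound on $|\tilde F|^2$, the argument would close.
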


Note that in view of (\ref{cw18}), $\lambda_L \gg 1$ amounts to 
$L$ being exponentially large in $\frac{1}{ \varepsilon^2 }$.

\medskip

Let us comment upon Theorem \ref{thm:intermittency}.
Its first part (\ref{mr02}) amounts to the observation that typically,
the Jacobian matrix $F_L(x)$ has two singular values of very different size.
This indicates an extreme non-conformality of the harmonic coordinates.
We note that since $\det F_L=\partial_1(u_L^1\partial_2 u_L^2)
-\partial_2(u_L^1 \partial_1 u_L^2)$ is a null-Lagrangian, (\ref{mr02}) is
only interesting because of the absolute values.
The second part (\ref{ao17})
implies that for typical point $x$, $|F_L(x)|^2$ (which is the sum of the squares
of the singular values) is much below its average (which in view of (\ref{cw18}) is $2\lambda_L$).
Hence the field $|F_L|^2$ features high peaks, suggesting an intermittent behavior of the process.
In other words, (\ref{ao17}) means that the random variable $|F_L|^2/\mathbb{E}|F_L|^2$
has bounded expectation but is not equi-integrable as $L\uparrow\infty$.

\medskip

Let us now interpret Theorem \ref{thm:intermittency}.
It is consistent with the following picture of the process $X$:
Since a realization $\psi$ of the Gaussian free field with UV cut-off
has closed stream lines, the process (\ref{cw01}) would be trapped in a finite region 
if it were not for diffusion. Incidentally, this
collection of loops is an object of active research 
in view of its emergent conformal invariance.
Diffusion allows the process $X$ to move from one level set to the next.
As $X$ explores larger and larger neighborhoods of its starting point,
it occasionally hits loops of large diameter, which introduces a ballistic element.
In the absence of an IR cut-off, there are loops of larger and larger diameter,
which is at the origin of super-diffusivity. This picture suggests an intermittent behavior
of $X$, with bursts of ballistic motion interrupting longer portions of more
diffusive behavior. The burst-like character is mirrored by non-equi-integrability (\ref{ao17}),
the ballistic nature is reflected by the dominance of one singular value (\ref{mr02}).
Thus Theorem \ref{thm:intermittency} 
is in contrast (but not in contradiction) with the quenched invariance
principle for $X$ reported in \cite{ABK24}. Theorem \ref{thm:intermittency}
seems to capture more of
the fine structure of the level sets of the Gaussian free field.


\section{Strategy of proof}\label{sec:proof-strategy}

\subsection{Variance decomposition of $\psi$ into a martingale
in $\tilde\lambda^2=1+\varepsilon^2\ln L$}\label{ss:scale}
Rather than estimating $F={\rm id}+\nabla\phi_L$ directly, we use a stationary proxy 
$\tilde\phi_L$ that is built on the idea of a scale-by-scale homogenization, 
see the next Subsection \ref{ss:defproxy}.
In the mathematical community,
the idea of a scale-by-scale homogenization for drift-diffusion processes with random drift
seems to originate in \cite{BricmontKupiainen91}. 
In an setting of anomalous diffusion for such processes due to failing homogenization, 
this strategy was first implemented in \cite{CMOW} for the problem at hand.
Shortly afterwards, scale-by-scale homogenization has also been employed for the construction
of (time-dependent) drifts that produce anomalous dissipation 
\cite{ArmstrongVicol23,BurczakSzekelyhidiWu23}, and very recently has been refined
in \cite{ABK24} to a quenched invariance principle for the problem at hand.

\medskip

In this paper, we follow the continuum version of the scale-by-scale
homogenization \cite{CMOW} very recently introduced in \cite{SuppCMOW}.
This construction relies on the fact that the family of stationary centered Gaussian fields 
$\psi_L$ characterized by (\ref{cw19}) can be coupled to form a process $L\mapsto\psi_L$
with independent increments,~i.~e.~
\begin{equation}\label{in07}
\psi_{L_+} - \psi_{L} ~ \text{is~independent~of} ~ \psi_{L} 
\end{equation}
for any $ L_+ \ge L $. Note that \eqref{in07} implies
$$
\E ( \psi_{L_+} - \psi_L ) (x) ( \psi_{L_+} - \psi_L ) (y) 
= \E  \psi_{L_+} (x) \psi_{L_+} (y) - \E \psi_L (x) \psi_L (y)
$$
so that by \eqref{cw19} the increments have covariance characterized by
\begin{equation}\label{in08}
\mathcal{F} c_{ \psi_{L+} - \psi_L } (k) = \varepsilon^2 I ( L_+^{-1} <
|k|\le L^{-1} ) \frac{1}{|k|^2}.
\end{equation}
This amounts to a variance decomposition of $\psi$.

\medskip

In view of (\ref{cw20}), it is natural to replace the independent variable $L\in[1,\infty)$ by
$\tilde\lambda^2-1\in[0,\infty)$, where
\begin{equation}\label{tl01}
\tilde\lambda^2 \coloneq 1 + \varepsilon^2 \ln L
\end{equation}
so that $\E \psi^2$ $=\tilde\lambda^2-1$; the notation (\ref{tl01}) is motivated by (\ref{cw13}).
Hence the martingale $\tilde\lambda^2-1\mapsto\psi_L$ shares features with Brownian motion;
we denote its distributional derivative\footnote{which can also be interpreted as 
It\^{o} differential}
by $ d \psi $. 
Due to the independence of increments and \eqref{in08}, 
the differential\footnote{\label{footnote:quadvar}That is, we have
$$
\int_{\ell_1}^{\ell_2} [ d \psi \, d \psi ]
\coloneq \lim_{ { \rm fineness }( \{ L_k \} ) \rightarrow 0 } \sum_{ k } ( \psi_{ L_{k} }(x) - \psi_{ L_{k-1} }(x) )^2,
$$
where the limit is taken over partitions of $ [\ell_1, \ell_2] $ with mesh size tending to $ 0 $. 
It is well know, that by Gaussianity, the limit is deterministic, and hence by stationarity 
also independent of $ x $. For a more detailed description, we refer the reader to the 
Sections 4.1 and 4.3 in \cite{LeGall}, with the warning that a different notation for the 
quadratic variation is used within that reference.} of the quadratic variation of $ \psi $ 
is given by
\begin{equation}\label{in09}
[ d \psi \, d\psi] = d \tilde\lambda^2.
\end{equation}
Relation (\ref{in09}) does not contain the explicit information
on the full quadratic variation, since both copies of 
$\psi$ are (implicitly) evaluated at the same point $x$.
Information on the covariation at different points is for instance contained in (\ref{qvarpsi})
below. In fact, Lemma \ref{lem:quad-var-nabla-phi} and Lemma \ref{lem:quad-var-nabla-nabla-phi}
can be seen as characterizing $[d\psi \, Ad\psi]$ for specific pseudo-differential operators $A$.

\subsection{Definition of the proxy corrector $\tilde\phi$ via an It\^{o} SDE in $\tilde\lambda^2$}\label{ss:defproxy}

Since our main result deals with the behavior of nonlinear coordinates, 
we follow \cite{SuppCMOW} in adopting a rather strict differential geometric language:
It is natural to interpret the coordinate directions $\xi$ as a linear form,
that is, a cotangent vector. Likewise, it is natural to interpret the differential $\nabla$ as
converting a scalar field into a cotangent vector field. 
It follows from (\ref{cw14}) that the cotangent vector field $\nabla\phi$ is linear
in $\xi$, so that we interpret $\nabla\phi$ as field with values in (linear) 
endomorphisms of cotangent space. Hence (\ref{cw14}) may be rewritten as
\begin{equation}\label{in10}
\nabla \cdot a_L ( { \rm id } + \nabla \phi_L ) = 0,
\end{equation}
where the product in the flux $a_L ( { \rm id } + \nabla \phi_L )$ is the composition
of endomorphisms.
In line with this, we will interpret $\phi$ as a tangent vector field with 
$\phi=\phi^i e_i $, where $\{e_i\}_{i=1,2}$ denotes the Cartesian basis of the tangent 
space\footnote{Upper indices enumerate the components of a tangent vector, 
whereas lower indices enumerate those of a cotangent vector, 
and we employ Einstein's convention of summation
over repeated indices provided one is up and the other down.} 
of the plane. 
On the other hand, the matrix representation of $\nabla\phi$ w.~r.~t.~the
(dual) Cartesian basis $\{e^j\}_{j=1,2}$ of cotangent space,
defined through $(F\xi)_j=F_j^i\xi_i$ where $\xi=\xi_je^j$, 
is given by  $(\nabla\phi)_j^i=\partial_j\phi^i$.

\medskip

We now recall the definitions from \cite[(2), (3) \& (4)]{SuppCMOW}. 
We shall often suppress the explicit dependence on $L$ 
and rather consider the objects as functions or Schwartz distributions of the
variable $\tilde\lambda^2$. The random Schwartz distributions 
(in $\tilde\lambda^2$ and in space)
$d\phi$ and $d\sigma$ with values in tangent vectors are defined\footnote{\label{footnote:dphi}$d\phi$
is not to be confused with the distributional derivative of $\phi$ w.~r.~t.~$\tilde\lambda^2$;
it is an approximation thereof, as we explain below. In the continuum approach from 
\cite{SuppCMOW} that we use here, it corresponds to the incremental quantity $ \phi' $ 
in \cite{CMOW}.} via the spatial 
Helmholtz decomposition
\begin{align}\label{wr01}
\tilde\lambda \nabla d\phi + d\psi J = J\nabla d\sigma
\quad\mbox{and}\quad
\mathbb{E} d\phi = \mathbb{E} d\sigma = 0.
\end{align}
Since by linearity, the random $d\phi$ and $d\sigma$ inherit stationarity 
in space from $d\psi$, the second part of (\ref{wr01}) ensures uniqueness\footnote{Alternatively,
the definition could be formulated in Fourier space, as done in Appendix \ref{ss:var-nabla-phi}.}. 
In Appendix \ref{ss:qv1} we derive the two local relations
\begin{align}\label{ho36}
\tr \nabla d\phi=0\quad\mbox{and}\quad
\tilde\lambda \tr J\nabla d\phi=-d\psi.
\end{align}
Since $\nabla d\phi$ and $\nabla d\sigma$ are Schwartz distributions with 
values in the endomorphisms of cotangent space, $\tilde\lambda$ is a smooth scalar,
and $J$ a constant endomorphisms, (\ref{wr01}) makes pathwise sense. 
Since $\tilde\lambda$ is deterministic, $d\phi$ and $d\sigma$ inherit the property of
having independent increments (in $\tilde\lambda^2$) from $d\psi$, which allows us to
consider them as drivers (in the jargon of SDEs).

\medskip

Equipped with these drivers, we define the proxy corrector $\tilde\phi$ and the
proxy flux corrector\footnote{in this paper, $\tilde\sigma$ does not
play a role} $\tilde\sigma$ as a process in 
$\tilde\lambda^2$ with values in stationary (smooth) tangent vector fields in $x$, 
via the Itô\footnote{As opposed to (\ref{wr01}), It\^{o} calculus is required
in (\ref{wr04}) in order to give sense to the product $\tilde\phi^i\partial_id\phi$ because
of the lack of regularity in $\tilde\lambda^2$.} differential equation
\begin{align}
d\tilde\phi&=(1+\tilde\phi^i\partial_i ) d\phi, 
\quad\tilde\phi|_{\tilde\lambda^2=1}=0,\label{wr04}\\
d\tilde\sigma&= d\sigma+\tilde\sigma^i\partial_id\phi+d\psi\,\tilde\phi
+\frac{1}{2\tilde\lambda} J\tilde\phi  \,d\tilde\lambda^2,
\quad\tilde\sigma|_{\tilde\lambda^2=1}=0.\label{wr02}
\end{align}
Note that (\ref{wr01}) implies $\tilde\lambda\triangle d\phi$ $=\nabla\cdot d\psi J$, 
an equation that amounts to the homogenization, and the linearization in $\varepsilon$, 
of the equation for the (distributional) derivative of $\phi$ w.~r.~t.~$\tilde\lambda^2$. 
Note also that the operation $1+\tilde\phi^i\partial_i$ 
amounts to taking the two-scale expansion. Hence (\ref{wr04}) implements the concept of a
scale-by-scale homogenization, on the level of the corrector itself.

\medskip

Recall from homogenization theory that the pair $(\phi,\sigma)$ 
of corrector and flux corrector is characterized 
by the Helmholtz decomposition $a({\rm id}+\nabla\phi)=\lambda{\rm id}+J\nabla\sigma$.
It was shown -- on a few pages and in a self-contained way --
in \cite[(5)]{SuppCMOW} that the residuum $f$ arising from
the approximation $(\tilde\phi,\tilde\sigma)$, defined through
\begin{align}\label{wr07}
a({\rm id}+\nabla\tilde\phi)=\tilde\lambda\,{\rm id}+J\nabla\tilde\sigma + f,
\end{align}
which is a process in $\tilde\lambda^2$ with values in stationary 
endomorphism fields in $x$,
satisfies
\begin{align}\label{wr03} 
\E f = 0
\quad { \rm and } \quad
\E | f |^2 \lesssim \varepsilon^2 \tilde\lambda,
\end{align}
see \cite[Proposition 1]{SuppCMOW}. 
We remark that (\ref{wr03})
states that the residuum $f$ is $O(\varepsilon\tilde\lambda^{-\frac{1}{2}})$ 
times smaller than the effective diffusivity $\lambda\approx\tilde\lambda$, 
to which it should be compared in view of (\ref{wr07}). 
Since in view of Lemma \ref{lem:evolution-proxy-phi}, 
the proxy corrector $\tilde\phi$ is $O(\varepsilon\tilde\lambda^{-1})$ times smaller
than linear growth $L$, the approach of \cite{CMOW}
amounts to a quantitative stochastic homogenization result
of order $\varepsilon\tilde\lambda^{-\frac{1}{2}}$ 
$\sim\varepsilon^\frac{1}{2}\ln^{-\frac{1}{4}}L$, cf.~(\ref{tl01}). 
Based on (\ref{wr07}) \& (\ref{wr03}), 
it was shown in \cite[(100)]{CMOW} that on the entire
time interval $(0,L^2)$, $\nabla v$ defined in (\ref{so07}) is approximated
by $\nabla\tilde\phi$: 
\begin{align}\label{so08}
\mathbb{E}\int_0^{L^2}dt|\nabla v-\nabla\tilde\phi|^2\lesssim\varepsilon^2\tilde\lambda L^2.
\end{align}
Apart from (\ref{wr03}) and (\ref{so08}), 
the present paper is self-contained (and reasonably short).

\medskip

A continuum variance decomposition, i.~e.~a continuum decomposition of 
an underlying Gaussian field,
often the Gaussian free field $\psi$ like here,
according to spatial scales $L$ is also used in quantum field theory
(where the underlying noise is thermal instead of environmental).
For instance, in case of the $\Phi^4_d$ model for $d=2,3,
$\cite{BG} use it to control (exponential) moments of the
field $\phi$ with help of suitable martingales in $\ln L$, analogous to $\tilde\phi$.
The variance decomposition also induces a
Hamilton-Jacobi-type evolution equation for the effective Hamiltonian in $\ln L$,
known as Polchinski equation. In \cite[Section 2.3]{GM}, to cite another recent
work, proxys for the solution of the Polchinski equation are constructed by truncation
of an expansion (analogous to an expansion in $\varepsilon$ here),
and the evolution of the residuum is monitored (like $f$ in \cite[(7)]{SuppCMOW}).


\subsection{Intermittency of the proxy Jacobian matrix $\tilde F={\rm id}+\nabla\tilde\phi$}

In line with (\ref{cw17b}), we set
\begin{align}\label{cw17bis}
\tilde F:={\rm id}+\nabla\tilde\phi;
\end{align}
following \cite[(52) \& (79)]{CMOW}, 
we shall argue in the proof of Theorem \ref{thm:intermittency} that
\begin{align}\label{wr03bis}
\E|F-\tilde F|^2\le\mathbb{E}|f|^2\stackrel{(\ref{wr03})}{\lesssim}\varepsilon^2\tilde\lambda.
\end{align}
Note also that in view of the relation (\ref{so09})
and definition (\ref{cw17bis}), the result (\ref{so08}) translates into
\begin{align}\label{so13}
\mathbb{E}\int_0^{L^2}dt|\nabla\bar X_t-\tilde F|^2\lesssim\varepsilon^2\tilde\lambda L^2.
\end{align}
Intermittency is more drastically seen on the level of the proxies:
\begin{theorem}[Intermittency for the proxies]\label{thm:proxy-intermittency}
We have
\begin{align}
\E ( \det \tilde F - 1 )^2 &\lesssim \varepsilon^2, \label{mr04} \phantom{ \frac{1}{2} } \\
\E | \tilde F |^2 &\approx 2 \tilde\lambda, \label{mr05} \phantom{ \frac{1}{2} } \\
\E | \tilde F |^4 &\approx \frac{8}{3} \tilde\lambda^3 + \frac{4}{3} \label{mr06}
\end{align}
as $\varepsilon\ll 1$ uniformly in $L\ge 1$, and thus in particular in our regime
$\tilde\lambda \gg 1 $ and $\varepsilon\ll 1$.
\end{theorem}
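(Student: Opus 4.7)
The plan is to apply Itô's formula in the time variable $\tilde\lambda^2$ to the three scalar observables $|\tilde F|^2$, $|\tilde F|^4$, and $(\det\tilde F-1)^2$, turning each of (\ref{mr04})--(\ref{mr06}) into a first-order ODE in $\tilde\lambda^2$ after taking expectations. Since $\tilde F=\mathrm{id}$ at $\tilde\lambda^2=1$, the initial values are $2$, $4$, and $0$ respectively, so integration of these ODEs will produce the claimed asymptotics.

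To set things up, I first differentiate (\ref{wr04}) in $x$ to obtain
\[
d\tilde F \;=\; \tilde F\,\nabla d\phi \;+\; \tilde\phi^i\,\partial_i\nabla d\phi,
\]
a pure Itô martingale in $\tilde\lambda^2$ since $\tilde F$ and $\tilde\phi$ are adapted and $d\phi$ has independent increments. Applying Itô to a smooth $M(\tilde F)$ and taking expectations, the martingale part drops out and only the Itô correction survives: the contraction of $M''(\tilde F)$ against the pointwise matrix-valued quadratic variation $[d\tilde F\otimes d\tilde F]$. In view of the Helmholtz decomposition (\ref{wr01}) and the basic identity (\ref{in09}), this pointwise covariation reduces to a deterministic $4$-tensor depending only on $\tilde\lambda$, which is precisely the content of Lemma \ref{lem:quad-var-nabla-phi} and, for the contributions involving $\tilde\phi^i\partial_i\nabla d\phi$, of Lemma \ref{lem:quad-var-nabla-nabla-phi}.

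Specialising $M$ to $|\tilde F|^2$ yields, to leading order in $\varepsilon$, an ODE of the form $\tfrac{d}{d\tilde\lambda^2}\mathbb{E}|\tilde F|^2 \approx \tilde\lambda^{-1}$, integrating to $\mathbb{E}|\tilde F|^2\approx 2\tilde\lambda$ and hence (\ref{mr05}); applying Itô to $|\tilde F|^4$, with (\ref{mr05}) feeding the Itô correction, gives $\tfrac{d}{d\tilde\lambda^2}\mathbb{E}|\tilde F|^4\approx 4\tilde\lambda$ and thus (\ref{mr06}) after integration from the initial value $4$. For (\ref{mr04}) I exploit the two-dimensional decomposition $\det\tilde F-1=\tr\nabla\tilde\phi+\det\nabla\tilde\phi$; the trace identity $\tr\nabla d\phi=0$ from (\ref{ho36}) eliminates the would-be leading martingale contribution, so that the increment of $\tr\nabla\tilde\phi$ is driven only by the small transport term $\tilde\phi^i\partial_i\nabla d\phi$. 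Combined with the smallness of $\tilde\phi$ itself (via Lemma \ref{lem:evolution-proxy-phi}), this produces an ODE for $\mathbb{E}(\det\tilde F-1)^2$ whose right-hand side is $O(\varepsilon^2)$ uniformly in $\tilde\lambda$, yielding the desired uniform bound.

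The main obstacle I expect is the two-scale transport term $\tilde\phi^i\partial_i\nabla d\phi$ in the SDE for $\tilde F$. It introduces cross-contributions to $[d\tilde F\otimes d\tilde F]$ involving second spatial derivatives of $d\phi$, which must be handled carefully: stationarity together with the divergence-free structure $\nabla\cdot d\phi=0$ (first part of (\ref{ho36})) lets one shift derivatives in $x$ onto $\tilde\phi$ by integration by parts, but the resulting factors of $\nabla\tilde\phi$ must then either be absorbed into $O(\varepsilon^2)$ errors or combined cleanly with the leading $\tilde F\,\nabla d\phi$ contribution. Tracking these cancellations is what pins down the exact constants $\tfrac{8}{3}$ and $\tfrac{4}{3}$ in (\ref{mr06}) and keeps the determinant ODE of size $\varepsilon^2$ rather than $\varepsilon$.
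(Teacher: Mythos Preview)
Your overall strategy---derive It\^o SDEs for $|\tilde F|^2$, $|\tilde F|^4$, and $\det\tilde F$ from (\ref{wr36}), take expectations, integrate the resulting ODEs---is exactly the paper's. But two of your specific claims are wrong, and each would break the proof.

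\textbf{The $|\tilde F|^4$ equation does not close through (\ref{mr05}).} It\^o applied to $|\tilde F|^4=(|\tilde F|^2)^2$ produces, after the dust settles (see (\ref{wr42b})), a drift $\frac{3}{2}|\tilde F|^4-2(\det\tilde F)^2$ times $\tilde\lambda^{-2}\,d\tilde\lambda^2$. So with $x=\tilde\lambda^2$, $B=\mathbb E|\tilde F|^4$, $C=\mathbb E(\det\tilde F)^2$, the ODE is $B'=\tfrac{3}{2}\tfrac{B}{x}-\tfrac{2C}{x}$, \emph{not} something fed by $\mathbb E|\tilde F|^2$. The constant $\tfrac{8}{3}$ arises precisely from the balance between the $\tfrac{3}{2}$ coefficient, the source $C\approx 1$, and the initial value $B(1)=4$; if you drop the $C$-term you get $B=4x^{3/2}$, the wrong constant. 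Note the circularity: to know $C\approx 1$ you first need a rough bound $B\lesssim x^{3/2}$, which the paper gets from the differential inequality $B'\le\tfrac{3}{2}\tfrac{B}{x}+O(\varepsilon^2 x^{-2}B^{1/2})$, then feeds back.

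\textbf{The decomposition $\det\tilde F-1=\tr\nabla\tilde\phi+\det\nabla\tilde\phi$ does not help.} Each summand is individually of size $\tilde\lambda$ in $L^2$ (since $\mathbb E|\nabla\tilde\phi|^2=\mathbb E|\tilde F|^2-2\approx 2\tilde\lambda-2$), so you cannot bound them separately. Moreover, your claim that $\tr\nabla d\phi=0$ kills the leading increment of $\tr\nabla\tilde\phi$ is false: differentiating (\ref{wr04}) gives $d\tr\nabla\tilde\phi=(\nabla\tilde\phi)^t.\nabla d\phi$, which is of leading order. The actual mechanism is that the \emph{full} leading martingale contribution to $d\det\tilde F$ is $(\adj\tilde F)^t.\tilde F\nabla d\phi=(\det\tilde F)\tr\nabla d\phi=0$, and the It\^o correction \emph{also} vanishes---the latter is a nontrivial algebraic fact (Lemma \ref{lem:detharm}) saying $\det$ is harmonic with respect to both $\diamond$ and $\bullet$. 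What survives is $d\det\tilde F=\tilde\phi^i(\adj\tilde F)^t.\nabla\partial_i d\phi$, whose quadratic variation carries the factor $|\tilde\phi|^2/L^2\lesssim\varepsilon^2\tilde\lambda^{-2}$; combined with the rough bound $\mathbb E|\tilde F|^4\lesssim\tilde\lambda^3$ this gives $|\frac{d}{dx}\mathbb E(\det\tilde F)^2|\lesssim\varepsilon^2 x^{-5/4}$, integrable. No integration by parts in $x$ is involved---the $\nabla^2 d\phi$ covariation is handled directly via Lemma \ref{lem:quad-var-nabla-nabla-phi} and the scaling $[\nabla^2 d\phi\otimes\nabla^2 d\phi]\sim L^{-2}$.
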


Clearly, the fourth-moment asymptotics (\ref{mr06}) is an
intermittency statement for the proxy $\tilde F$ of much stronger form than the statement 
(\ref{ao17}) on second-moment tails of the harmonic coordinates $ F $. 
In view of the error estimate (\ref{wr03bis}), 
which is on the level of second moment, only the estimate in the
following proposition transfers from $\tilde F$ to $F$.

\begin{proposition}\label{prop:proxy-intermittency-2}
We have
\begin{align}\label{ao13}
\E I( | \tilde F |^2 \leq \hat r \, \E | \tilde F |^2) | \tilde F |^2 \ll \E | \tilde F |^2
\end{align}
provided $ \varepsilon^2 \ll 1$, $ \tilde\lambda \gg 1$ and
\begin{align}\label{ao12}
\hat r \ll  e^{ - \sqrt{2\ln \tilde\lambda}} \sqrt{\tilde\lambda} .
\end{align}
\end{proposition}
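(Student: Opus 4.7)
\emph{Approach.} For bounding a truncated expectation $\E[|\tilde F|^2 I(|\tilde F|^2 \le R)]$ from above, the natural tool is the fractional-moment inequality: for any $p \in (0,1)$, on the event $\{|\tilde F|^2 \le R\}$ we have $|\tilde F|^{2(1-p)} \le R^{1-p}$, and hence
\begin{align*}
\E\bigl[|\tilde F|^2 \, I(|\tilde F|^2 \le R)\bigr] \le R^{1-p} \, \E|\tilde F|^{2p}.
\end{align*}
Setting $R = \hat r\, \E|\tilde F|^2 \approx 2\hat r \tilde\lambda$ reduces the proposition to establishing the fractional-moment bound
\begin{align*}
(\star) \qquad \E|\tilde F|^{2p} \lesssim (2\tilde\lambda)^p \, \tilde\lambda^{-p(1-p)/2} \qquad \text{for every } p \in (0,1),
\end{align*}
which strictly improves on the trivial Jensen bound $\E|\tilde F|^{2p} \le (2\tilde\lambda)^p$ and encodes the approximately log-normal / multiplicative-chaos character of $|\tilde F|^2$ with variance parameter $\sigma^2 \sim \ln\tilde\lambda$. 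It is consistent with \eqref{mr05}--\eqref{mr06} at the endpoints $p = 1,2$.

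\emph{Optimization.} Granting $(\star)$, the fractional-moment inequality gives
\begin{align*}
\E\bigl[|\tilde F|^2 \, I(|\tilde F|^2 \le \hat r\, \E|\tilde F|^2)\bigr] \lesssim 2\tilde\lambda \, \exp\!\Bigl((1-p)\ln\hat r - \tfrac{1}{2} p(1-p)\ln\tilde\lambda\Bigr).
\end{align*}
Minimizing the exponent over $q = 1-p \in (0,1)$ yields $q^{\ast} = (\tfrac{1}{2}\ln\tilde\lambda - \ln\hat r)/\ln\tilde\lambda$ and minimum value $-\bigl(\tfrac{1}{2} - \ln\hat r/\ln\tilde\lambda\bigr)^2 \ln\tilde\lambda/2$. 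Writing $\ln\hat r = \tfrac{1}{2}\ln\tilde\lambda - c\sqrt{\ln\tilde\lambda}$, this equals $-c^2/2$; the borderline value $c = \sqrt{2}$ in \eqref{ao12} corresponds precisely to exponent $-1$, and the strict hypothesis $\hat r \ll e^{-\sqrt{2\ln\tilde\lambda}}\sqrt{\tilde\lambda}$ drives $c \to \infty$ and the bound to zero compared to $\E|\tilde F|^2 \approx 2\tilde\lambda$, yielding the conclusion.

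\emph{Main obstacle.} The core work is to prove $(\star)$ for non-integer $p$, which I would do by mimicking the Itô-calculus underlying \eqref{mr05} and \eqref{mr06}. Differentiating \eqref{wr04} spatially yields
\begin{align*}
d\tilde F^i_j = \tilde F^k_j \, \partial_k d\phi^i + \tilde\phi^k \, \partial_k \partial_j d\phi^i;
\end{align*}
applying Itô's formula to the smooth matrix function $A \mapsto |A|^{2p}$, invoking the quadratic-variation identity \eqref{in09} together with \eqref{ho36}, and taking expectation produces an ODE in $\tilde\lambda^2$ for $\E|\tilde F|^{2p}$, whose solution — up to errors controlled by the residuum \eqref{wr03} — reproduces $(\star)$. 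The subtle point is the transport term $\tilde\phi^k \partial_k \partial_j d\phi^i$ in $d\tilde F$: contracted against $|\tilde F|^{2(p-1)}$ it couples $\tilde F$ back to $\tilde\phi$ itself and has to be re-expressed via spatial integration by parts using stationarity; keeping the $\varepsilon$-dependent corrections subdominant to the intermittency asymptotics uniformly in $p$ is the most delicate bookkeeping aspect.
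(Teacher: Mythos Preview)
Your fractional--moment strategy is a natural alternative, and the optimization is set up correctly: the leading-order backward generator for $r=|\tilde F|^2$ is $\mathcal{A}=\tfrac{r}{2}\partial_r+\tfrac{r^2}{4}\partial_r^2$, the powers $r^p$ are its eigenfunctions with eigenvalue $\tfrac{p(p+1)}{4}$, and this is exactly what makes $(\star)$ plausible and the threshold \eqref{ao12} emerge. The paper takes a different route: rather than fractional moments, it picks a \emph{bounded} observable $\zeta$ solving the backward equation $\partial_\tau\zeta+\mathcal{A}\zeta=0$ with terminal data a smoothed cut-off, changes variables $(r,\zeta)\mapsto(\ln(r/\tilde\lambda),\zeta/r)$ to reduce to a constant-coefficient drift-diffusion on $\mathbb{R}$, and reads off the Gaussian tail. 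The two approaches are dual---yours tests against eigenfunctions, the paper tests against the exact dual flow---and they produce the same asymptotic window for $\hat r$.

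The genuine gap in your sketch is the error control needed for $(\star)$. Applying It\^o's formula to $r^p$ with $p<1$ yields, besides the $\tfrac{p(p+1)}{4}\,\E r^p$ drift, two perturbations: a term $+p(1-p)\,\E\big[r^{p-2}(\det\tilde F)^2\big]$ and the $\tilde\phi$-terms $\sim\E\big[r^{p-1}|\tilde\phi|^2/L^2\big]$. For $p<1$ both carry \emph{negative} powers of $r$, for which you have no a~priori bound. The $\det\tilde F$ contribution can in fact be saved via the pointwise inequality $r^{p-2}(\det\tilde F)^2\le 2^{p-2}|\det\tilde F|^p$ (using $|\det\tilde F|\le r/2$) together with \eqref{mr04}; you do not mention this term at all. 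The $\tilde\phi$-term is harder: your proposed fix, ``spatial integration by parts using stationarity'', applies to first moments of linear expressions, not to the quadratic variation contributions entering It\^o's formula, and invoking the residuum bound \eqref{wr03} is beside the point---\eqref{wr03} controls $\tilde\phi$ versus $\phi$, whereas here the issue is intrinsic to the SDE \eqref{wr36} for $\tilde F$ itself. The paper sidesteps the whole difficulty because for its bounded $\zeta$ one has $\sup_r r|\partial_r^2\zeta|<\infty$ and $\sup_r|\partial_r\zeta|<\infty$, so the error terms decouple from the unknown lower tail of $|\tilde F|^2$ and are estimated directly by \eqref{mr04}, \eqref{mr05}, \eqref{ode06a}; this is precisely what fails for $\zeta=r^p$.
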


The statement \eqref{ao13} is not an immediate consequence of
Theorem \ref{thm:proxy-intermittency} but follows from an analysis
of the backward Kolmogorov equation for $|\tilde F|^2$. We will derive the SDE for
$|\tilde F|^2$ in the proof of (\ref{wr42}).
\medskip

The strategy that leads to Theorem \ref{thm:proxy-intermittency} is in the spirit of 
\cite{CMOW} (in its continuum version \cite{SuppCMOW}): 
It is based on the It\^{o} SDE
\begin{align}\label{wr36}
d \tilde F = \tilde F \nabla d\phi + \tilde\phi^i\nabla\partial_id\phi,
\quad\tilde F|_{\tilde\lambda^2=1}={\rm id},
\end{align}
which follows by applying $\nabla$ to (\ref{wr04}),
and noting that
$\partial_i d\phi\otimes\nabla\tilde\phi^i$ 
$= ( \nabla\tilde\phi ) \nabla d\phi$,
where the product on the r.~h.~s.~denotes the composition of endomorphisms
(of cotangent space).
In particular, $\tilde F$ is a martingale.
First, we use It\^{o} calculus
on (\ref{wr36}) to derive SDEs on $\det\tilde F$ and $|\tilde F|^2$ 
as functions of $\tilde\lambda^2$, and then use these to derive ODEs for
$\E\det\tilde F$, $ \E(\det\tilde F)^2$, 
$\E |\tilde F|^2 $, and $ \E |\tilde F|^4$.
Finally, we use these ODEs to derive the precise
asymptotics for $\tilde\lambda\gg 1$.

\medskip

The outcome of our rigorous analysis allows for the following
informal interpretation: As $\varepsilon\ll 1$, 
the first r.~h.~s.~term in (\ref{wr36}) is dominant, that is,
$\tilde F$ is close to $\tilde F_{exp}$, where $\tilde F_{exp}$ is the 
stochastic exponential defined through
\begin{align}\label{ho34}
d\tilde F_{exp}=\tilde F_{exp}\nabla d\phi\;\;\mbox{and}\;\;
\tilde F_{exp}|_{\tilde\lambda^2=1}={\rm id}.
\end{align}
More precisely, $\tilde F_{exp}$ is the stochastic exponential of the martingale obtained from integrating $ \nabla d\phi$. 
Since by (\ref{wr01}) $\tilde\lambda^{-1}\nabla d\phi$ is the Helmholtz projection of $d\psi J$, 
and $\psi=\int_1^{ \cdot }d\psi$ is the Gaussian free field,
(see the second item in (\ref{ho36}) for a more direct relation between
$\tilde\lambda^{-1}\nabla d\phi$ and $d\psi$), 
we may assimilate $\tilde F_{exp}$ to a Gaussian multiplicative chaos. 

\medskip

In fact, in Fourier space, definition (\ref{wr01}) 
assumes the simple form of ${\mathcal F}d\phi$ 
$=i\frac{{\mathcal F}d\psi}{\tilde\lambda}\frac{(J k)^*}{|k|^2}$,
where the wave vector $k$ naturally is a co-tangent vector, and for a
cotangent vector $\xi$, $\xi^*$ denotes the associated tangent vector via the
inner product. The driving (Gaussian) martingale  
$\nabla\phi':=\int_1^{\cdot} \nabla d\phi$ in (\ref{ho34})
is characterized by its covariance tensor $C'$, 
which is a four-tensor; its Fourier transform is explicit:
\begin{align}\label{so14}
{\mathcal F}C'(k)=\frac{\varepsilon^2}{1-\varepsilon^2\ln|k|}I(L^{-1}<|k|\le1)
\underbrace{\frac{1}{|k|^6}((Jk)^*\otimes k)\otimes((Jk)^*\otimes k)}.
\end{align}
We remark that the underbraced part scales as for the Gaussian free field, that is, $|k|^{-2}$.
For very large scales, i.~e.~$\varepsilon^2\ln\frac{1}{|k|}\gg 1$, 
the prefactor is $\approx-\frac{1}{\ln|k|}$
and thus independent of $\varepsilon$; in this range,
the overall scaling is $(|k|^2\ln\frac{1}{|k|})^{-1}$. Since this expression is
just not integrable for $|k|\downarrow 0$, this logarithmic discount does
not prevent an IR-divergence as $L\uparrow\infty$.

\medskip

However, the tensor $\nabla d\phi$ has a special structure, as reflected by
its quadratic variation\footnote{which is a four-tensor} $\diamond$, 
see definition (\ref{wr51}). Since $\tr\nabla d\phi=0$, 
which is the first item in (\ref{ho36}), and in view of the property (\ref{detharm})
of $\diamond$, we have 
\begin{align}
d\det\tilde F_{exp}=0\quad\mbox{so that}\quad\det\tilde F_{ exp }=1,\label{ho35}
\end{align}
see Section \ref{s:proofs} for all details.
Likewise, the property (\ref{wr38d}) of $\diamond$ 
yields\footnote{see below for the standard  definition of $F^*$}
\begin{align}
d|\tilde F_{exp}|^2=
\frac{1}{2}|\tilde F_{exp}|^2\frac{d\tilde\lambda^2}{\tilde\lambda^2}
+2\tr\tilde F_{exp}^*\tilde F_{exp}\nabla d\phi.\label{ho32}
\end{align}
In a second stage this implies, appealing to the finer structure (\ref{wr45bis})
of $\diamond$,
\begin{align}\label{ho33}
d|\tilde F_{exp}|^4 = \big( \frac{3}{2}|\tilde F_{exp}|^4- 2 ( \det \tilde F_{ exp } )^2 \big) \frac{d\tilde\lambda^2}{\tilde\lambda^2} 
+\mbox{martingale},
\end{align}
where $\frac{3}{2}$ arises as $2\times\frac{1}{2}+2^2\times\frac{1}{8}$ with 
$\frac{1}{2}$ coming from (\ref{ho32}) and $\frac{1}{8}$
coming from (\ref{wr45bis}).
It is the integrating factor $\frac{1}{2\tilde\lambda^2}$ in (\ref{ho32}) 
that is responsible for the growth of order $ (\tilde\lambda^2)^\frac{1}{2}= \tilde\lambda$
in (\ref{mr05}); this was already clear from \cite{CMOW}.
Hence the factor $\frac{1}{2}$ in (\ref{ho32}) matters.
Intermittency arises from the fact that the factor $\frac{3}{2}$ in (\ref{ho33})
is (strictly) larger than twice the factor $\frac{1}{2}$ of (\ref{ho32}).
Hence the factor $\frac{3}{2}$
in (\ref{ho33}) matters (relative to the one in (\ref{ho32})).
In this sense, the tensorial stochastic exponential defined
through (\ref{ho33}) behaves as if
$\tilde F_{exp}$ and $\nabla d\phi$ were scalar. 
In fact, $\tilde F_{exp}$ is close to being rank-one 
in view of the (near-)degeneracy of $\tilde F_{exp}$ in form of $|\det\tilde F_{exp}|=1
\ll\tilde\lambda^2= \E|\tilde F_{exp}|^2$,
which in turn is a consequence of (\ref{ho35}) \& (\ref{ho32}).
Presently, our analysis is limited to monitoring quadratic and quartic
expressions in $\tilde F_{exp}(x)$ for the same point $x$ (which we suppress in our notation
because of stationarity). However, (\ref{ho34}) also allows us to identify evolution equations for multi-point covariance functions of $\tilde F_{exp}$; likewise, (\ref{wr04}) allows to characterize the evolution of increments of $\tilde\phi$, which we also expect to be intermittent.

\medskip

In the upcoming Lemma \ref{lem:evolution-f}, the endomorphism $F^t$ 
of tangent space denotes the transpose of the endomorphism $F$ of cotangent space
(transposition is a purely linear algebra operation), whereas the endomorphism $F^*$ 
on cotangent space denotes its adjoint
w.~r.~t.~the inner product $\cdot$.
It also features the adjugate\footnote{In coordinates $ F = F_j^i e_i \otimes e^j $ we have $ \adj F = F_2^2 e_1 \otimes e^1 + F_1^1 e_2 \otimes e^2 - F_1^2 e_1 \otimes e^2 - F_2^1 e_2 \otimes e^1 $.}
$\adj F:=(\det F)F^{-1}$, which in $d=2$ is linear in $F$.

\begin{lemma}[ODEs for moments of $ \tilde F $ and $ det\tilde F $]\label{lem:evolution-f}
We have 
\begin{align}
d\mathbb{E}\det \tilde F &= 0,\label{wr44} \\
d\mathbb{E}(\det \tilde F)^2&=\mathbb{E}
((\adj \tilde F)^t \otimes\tilde\phi)
\bullet((\adj \tilde F)^t \otimes\tilde\phi) 
\frac{ d \tilde\lambda^2 }{ L^2 \tilde\lambda^2 }, \label{wr45}\\
d\mathbb{E}| \tilde F|^2&=\big(\frac{1}{2}\mathbb{E}| \tilde F|^2+\frac{\mathbb{E}|\tilde\phi|^2}{2 L^2}\big) 
\frac{d\tilde\lambda^2}{\tilde\lambda^2},\label{wr43}\\
d\mathbb{E}| \tilde F|^4&=\big(\frac{3}{2}\mathbb{E}| \tilde F|^4-2\mathbb{E}(\det \tilde F)^2\big)\frac{d\tilde\lambda^2}{\tilde\lambda^2} \label{wr42} \\
& + \big( \mathbb{E} | \tilde\phi |^2 | \tilde F |^2 + 4 \mathbb{E} (\tilde F^{*t}\otimes\tilde\phi) 
\bullet (\tilde F^{*t}\otimes\tilde\phi) \big) 
\frac{d\tilde\lambda^2}{L^2 \tilde\lambda^2}, \nonumber
\end{align}
where 
$\bullet$ is a universal symmetric positive semi-definite bilinear form on 
the tensor space $(\mbox{endomorphisms of tangent space})\otimes(\mbox{tangent space})$.
\end{lemma}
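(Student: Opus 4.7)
My plan is to apply Itô's formula to the scalar quantities $\det \tilde F$, $(\det \tilde F)^2$, $|\tilde F|^2$, and $|\tilde F|^4$ using the SDE (\ref{wr36}) for $\tilde F$, and then take expectation. Since (\ref{wr36}) asserts that $\tilde F$ is a martingale in the parameter $\tilde\lambda^2$, only the Itô (second-order, quadratic-variation) terms survive upon averaging. The two drivers in (\ref{wr36}), namely $\tilde F\nabla d\phi$ and $\tilde\phi^i\nabla\partial_i d\phi$, produce two qualitatively different contributions: variations built from $\tilde F\nabla d\phi$ alone give the terms of order $d\tilde\lambda^2/\tilde\lambda^2$, because by (\ref{wr01}) the covariance of $d\phi$ is inherited from $d\psi$ with $[d\psi\,d\psi]=d\tilde\lambda^2$ by (\ref{in09}); variations involving the extra derivative in $\nabla\partial_i d\phi$ carry an additional $L^{-2}$ factor, since the further $\partial_i$ contributes a factor of $L^{-1}$ through the IR cutoff in the covariance of $d\phi$.

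At the heart of the computation are the universal algebraic identities (\ref{detharm}), (\ref{wr38d}), (\ref{wr45bis}) for the quadratic variation $\diamond$ of $\nabla d\phi$, together with the trace-free property $\tr\nabla d\phi=0$ from (\ref{ho36}). For (\ref{wr44}), I would apply $d\det\tilde F=\tr(\adj\tilde F\cdot d\tilde F)+\tfrac12\,(\text{Hessian of }\det)[d\tilde F,d\tilde F]$; the first-order piece is a martingale, the Itô correction coming from $\tilde F\nabla d\phi$ vanishes by the null-Lagrangian identity (\ref{detharm}), and the correction involving $\nabla\partial_i d\phi$ vanishes similarly using $\tr\nabla\partial_i d\phi=0$. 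For (\ref{wr43}), I would apply Itô to $|\tilde F|^2=\tr\tilde F^*\tilde F$: the self-variation of $\tilde F\nabla d\phi$ produces $\tfrac12\mathbb E|\tilde F|^2\,d\tilde\lambda^2/\tilde\lambda^2$ via (\ref{wr38d}) (this is exactly (\ref{ho32})), the self-variation of $\tilde\phi^i\nabla\partial_i d\phi$ produces the $\mathbb E|\tilde\phi|^2/(2L^2)$ correction, and the cross term averages to zero by an odd/even symmetry of the covariance of $\nabla d\phi$ at the origin.

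The identities (\ref{wr45}) and (\ref{wr42}) then follow analogously by applying Itô's formula to $(\det\tilde F)^2$ and $(|\tilde F|^2)^2$ and reusing (\ref{wr44}) and (\ref{wr43}) for the first-order pieces. The leading coefficient $\tfrac32$ in (\ref{wr42}) arises, as explained around (\ref{ho33}), as $2\cdot\tfrac12+4\cdot\tfrac18$ — the first factor from (\ref{wr43}) and the second from the refined identity (\ref{wr45bis}) — while the $-2\mathbb E(\det\tilde F)^2$ correction is forced by the two-dimensional polarization identity relating $|F|^4$ and $(\det F)^2$ within the fourth-order $\diamond$-identity. The bilinear form $\bullet$ is then identified, once and for all, as the covariance structure of the Gaussian increment of $\nabla\partial_i d\phi$ on the tensor space $(\text{endomorphisms of tangent space})\otimes(\text{tangent space})$, via the definition of $\diamond$ in (\ref{wr51}); its symmetry and positive semi-definiteness are manifest from this construction. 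I expect the main obstacle to be the tensor bookkeeping in (\ref{wr42}): several distinct contractions arising from the mixed and pure $\tilde\phi$-translational quadratic variations must be checked to reorganize precisely into $\bullet$ applied to $\tilde F^{*t}\otimes\tilde\phi$, and the $-2(\det\tilde F)^2$ correction must be produced with the correct coefficient by the two-dimensional algebra, both of which require a careful identification of the fourth-order $\diamond$-identity beyond its leading scalar content.
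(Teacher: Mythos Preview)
Your strategy matches the paper's proof essentially step for step: It\^{o}'s formula applied to (\ref{wr36}) for each of the four scalars, the cross-covariations of $\nabla d\phi$ and $\nabla^2 d\phi$ discarded by parity (the paper's Remark~\ref{rmk:vanishing-covariation}), the $\diamond$-identities (\ref{wr38d}) and (\ref{wr45bis}) producing the coefficients $\tfrac12$ and $\tfrac32-2(\det\tilde F)^2/|\tilde F|^4$, and the $\bullet$-form from (\ref{wr80}) collecting the $L^{-2}$ contributions.

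One correction: for (\ref{wr44}) you say the It\^{o} correction from $\tilde\phi^i\nabla\partial_i d\phi$ vanishes ``using $\tr\nabla\partial_i d\phi=0$''. That is not the right reason. The Hessian of $\det$ in $d=2$ is the bilinear form $(M,N)\mapsto M^1_1 N^2_2+M^2_2 N^1_1-M^1_2 N^2_1-M^2_1 N^1_2$, which is not controlled by the trace of either argument; you need the $\bullet$-analogue of (\ref{detharm}), namely the identity (\ref{detharm02}) in Lemma~\ref{lem:detharm}, which is a separate (though elementary) computation with the basis $\textit{\textbf{E}}^n$. With that fix your outline is complete.
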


Here, universal means that $\bullet$ is independent of any parameter
like $\varepsilon$ and $L$; $\bullet$ is defined in (\ref{wr80}), and characterized
as being universal in Appendix \ref{ss:var-nabla-nabla-phi}.
The identities of Lemma \ref{lem:evolution-f}
are much more explicit than needed for Theorem \ref{thm:proxy-intermittency}.
As we know from \cite{CMOW}, see Lemma \ref{lem:evolution-proxy-phi}, 
$\tilde\phi$ is of order $\frac{\varepsilon L}{\tilde\lambda}$;
hence in view of (\ref{wr45}) and (\ref{wr42}), 
$\det\tilde F$ is of lower order w.~r.~t.~$|\tilde F|^2$.
For the same reason,
the constants in front of the terms involving $\tilde\phi$ are irrelevant; 
so is the specific form of $\bullet$.
However, for this we need to capture 
$\tilde\phi=O(\frac{\varepsilon L}{\tilde\lambda})$ 
up to fourth moments, as already done in
\cite[(72)]{CMOW} and \cite[(30) \& (38)]{SuppCMOW} 
based on (\ref{wr04}) and once more It\^{o} calculus.
Again, we are more explicit here,
although the factors don't matter for the asymptotic scaling.

\begin{lemma}[Evolution of moments of $ \tilde \phi $]\label{lem:evolution-proxy-phi}
We have
\begin{align}
d\mathbb{E}|\tilde\phi|^2&=(\frac{1}{2}\mathbb{E}|\tilde\phi|^2+L^2)
\frac{d\tilde\lambda^2}{\tilde\lambda^2},\label{wr67}\\
d\mathbb{E}|\tilde\phi|^4&=(\frac{3}{2}\mathbb{E}|\tilde\phi|^4+4L^2\mathbb{E}|\tilde\phi|^2)
\frac{d\tilde\lambda^2}{\tilde\lambda^2}.\label{wr70}
\end{align}
\end{lemma}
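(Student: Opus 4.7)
The plan is to derive both ODEs by Itô calculus in $\tilde\lambda^2$ applied to the SDE (\ref{wr04}). Upon taking expectation, the martingale contributions drop out and only the quadratic covariation remains. By (\ref{wr04}) this covariation expands as
$[d\tilde\phi^i, d\tilde\phi^j] = [d\phi^i, d\phi^j] + 2\tilde\phi^k [d\phi^i, \partial_k d\phi^j] + \tilde\phi^k \tilde\phi^l [\partial_k d\phi^i, \partial_l d\phi^j]$,
and by the independent-increment property (\ref{in07}) together with Gaussianity of $d\psi$, each bracket on the right is a deterministic tensor, encoded in the spatial covariance density $K^{ij}(h) := [d\phi^i(0), d\phi^j(h)]/d\tilde\lambda^2$ and its first two derivatives at $h=0$.

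The core computation is the evaluation of these three tensor quantities. I would use the Fourier representation $\mathcal{F}d\phi = i(\mathcal{F}d\psi/\tilde\lambda)(Jk)^{\ast}/|k|^2$ implicit in (\ref{wr01}), together with (\ref{in09}), which localizes the spectral mass of $d\psi$ per unit $d\tilde\lambda^2$ on the shell $|k|=L^{-1}$ with total integral $1$. Writing $\hat m := J\hat k$, averaging on $S^1$ then gives: (i) $K^{ij}(0) = \frac{L^2}{2\tilde\lambda^2}\delta^{ij}$, from $\int_{S^1} \hat m^i \hat m^j \, d\theta/(2\pi) = \frac{1}{2}\delta^{ij}$; (ii) $\partial_k K^{ij}(0)$ vanishes when contracted with any symmetric tensor in $i,j$, because the symmetric part of $K^{ij}$ is even in $h$; and (iii) the isotropic four-tensor $-\partial_k \partial_l K^{ij}(0) = \frac{1}{\tilde\lambda^2}\bigl(\frac{3}{8}\delta^{kl}\delta^{ij} - \frac{1}{8}(\delta^{ki}\delta^{lj} + \delta^{kj}\delta^{li})\bigr)$, arising from $\int_{S^1}\hat k_k \hat k_l \hat m^i \hat m^j \, d\theta/(2\pi)$.

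Given (i)--(iii), Itô applied to $|\tilde\phi|^2 = \delta_{ij}\tilde\phi^i \tilde\phi^j$ and taking expectation yields (\ref{wr67}) at once, since contracting the three brackets with $\delta_{ij}$ produces $(L^2 + \tfrac{1}{2}|\tilde\phi|^2)/\tilde\lambda^2$. For (\ref{wr70}) I apply Itô to $(|\tilde\phi|^2)^2$, which gives the drift $2|\tilde\phi|^2 \delta_{ij}[d\tilde\phi^i, d\tilde\phi^j] + 4\tilde\phi_i\tilde\phi_j [d\tilde\phi^i, d\tilde\phi^j]$ after the quadratic-variation term $[d|\tilde\phi|^2, d|\tilde\phi|^2] = 4\tilde\phi_i\tilde\phi_j[d\tilde\phi^i, d\tilde\phi^j]$. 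The first piece is read off from the second-moment computation; for the second, the quartic contraction of (iii) against $\tilde\phi^{\otimes 4}$ collapses to $\bigl(\tfrac{3}{8} - \tfrac{1}{8} - \tfrac{1}{8}\bigr)|\tilde\phi|^4 = \tfrac{1}{8}|\tilde\phi|^4$. Adding the two $\E|\tilde\phi|^4$ contributions gives the key coefficient $\frac{3}{2} = 1 + \frac{1}{2}$ of (\ref{wr70}), while the $L^2\E|\tilde\phi|^2$ contributions sum to $4L^2\E|\tilde\phi|^2$ as required.

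The main obstacle is part (iii): one must identify the full isotropic four-tensor --- three independent constants in 2D, reducing in the end to two by parity --- and verify that the complete contraction against $\tilde\phi^{\otimes 4}$ yields a \emph{strictly positive} residual coefficient $\tfrac{1}{8}$ rather than zero. It is precisely this surplus over twice the second-moment factor $\tfrac{1}{2}$ that upgrades the linear ODE (\ref{wr67}) into the super-linear (\ref{wr70}), and that furnishes the quantitative engine for the intermittency discussed around (\ref{ho32})--(\ref{ho33}). By contrast, the vanishing of the cross term in (ii) and the isotropic reduction of one trace of (iii) to $\delta^{kl}/(2\tilde\lambda^2)$ are essentially automatic from parity and rotation invariance.
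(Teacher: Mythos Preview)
Your argument is correct and follows the same overall It\^o strategy as the paper: apply It\^o's formula to (\ref{wr04}) for $|\tilde\phi|^2$, then once more for $|\tilde\phi|^4$, identifying the quadratic covariations along the way. The difference lies only in how those covariations are computed. The paper packages them into the abstract bilinear form $\diamond$ of Definition~\ref{defn:quad-var}, which it diagonalizes in the basis $E^1,\ldots,E^4$ of (\ref{wr82}) via rotation-invariance arguments (Appendix~\ref{ss:var-nabla-phi}), and then reads off the needed contractions from Lemma~\ref{lem:quad-var-nabla-phi}, specifically (\ref{wr75bis}), (\ref{wr65bis}), and (\ref{wr72}). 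You instead compute the spatial covariance kernel $K^{ij}(h)$ and its second derivatives directly as $S^1$-averages of $\hat k_k\hat k_l\hat m^i\hat m^j$, obtaining the explicit isotropic four-tensor $\tfrac{3}{8}\delta^{kl}\delta^{ij}-\tfrac{1}{8}(\delta^{ki}\delta^{lj}+\delta^{kj}\delta^{li})$. Your route is more elementary and self-contained for this particular lemma; the paper's $\diamond$ formalism is an investment that pays off when the same machinery is reused for the harder $\tilde F$ computations in Lemma~\ref{lem:evolution-f} and for $\bullet$ in Appendix~\ref{ss:var-nabla-nabla-phi}. The key numerical identities --- your $\tfrac{1}{8}$ from the quartic contraction, the paper's (\ref{wr72}) --- are of course the same.
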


Equipped with these evolution equations for
$ \E |\tilde\phi |^2 $ and $ \E |\tilde\phi |^4 $, we derive the 
precise asymptotics for these quantities, 
which already appeared in \cite[(69) \& (70)]{CMOW} and \cite[(31) \& (39)]{SuppCMOW}.

\begin{lemma}[Integration for $ \tilde \phi $]\label{lem:growth-proxy-phi}
We have
\begin{align}
\mathbb{E}|\tilde\phi|^2 & \approx \frac{ \varepsilon^2 }{ 2 } \frac{ L^2 }{ \tilde\lambda^2 }, \label{ode02a} \\
\mathbb{E}|\tilde\phi|^4 & \approx  \frac{ \varepsilon^4 }{ 2 } \frac{ L^4 }{ \tilde\lambda^4 } \label{ode03a}
\end{align}
as $L\gg 1$ and uniformly in $\varepsilon\le 1$.
\end{lemma}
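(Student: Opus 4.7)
The plan is to solve the linear first-order ODEs (\ref{wr67}) and (\ref{wr70}) in $s:=\tilde\lambda^2$ explicitly by integrating factors, and then to extract the leading-order asymptotics from the resulting integrals by Laplace's method, exploiting that $L(s)^2=e^{2(s-1)/\varepsilon^2}$ is exponentially concentrated at the upper endpoint of integration.

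Starting with (\ref{ode02a}) and setting $m_2(s):=\mathbb{E}|\tilde\phi|^2$, equation (\ref{wr67}) becomes $m_2'=m_2/(2s)+L(s)^2/s$ with $m_2(1)=0$. The integrating factor $s^{-1/2}$ reduces this to $(s^{-1/2}m_2)'=L(s)^2/s^{3/2}$, so
\begin{align*}
m_2(\tilde\lambda^2)=\tilde\lambda\int_1^{\tilde\lambda^2}\frac{L(t)^2}{t^{3/2}}dt.
\end{align*}
The substitution $t=\tilde\lambda^2-\varepsilon^2\rho/2$ (so that $L(t)^2=L^2 e^{-\rho}$) rewrites this as
\begin{align*}
m_2(\tilde\lambda^2)=\frac{\varepsilon^2 L^2}{2\tilde\lambda^2}\int_0^{2\ln L}\frac{e^{-\rho}}{(1-\varepsilon^2\rho/(2\tilde\lambda^2))^{3/2}}d\rho,
\end{align*}
and (\ref{ode02a}) reduces to showing that the remaining integral tends to $1$ as $L\to\infty$ uniformly in $\varepsilon\le 1$. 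The key quantitative input is the elementary bound $\varepsilon^2/\tilde\lambda^2=\varepsilon^2/(1+\varepsilon^2\ln L)\le 1/\ln L$: for any small $\delta>0$, on $[0,\delta\ln L]$ the prefactor $(1-\varepsilon^2\rho/(2\tilde\lambda^2))^{-3/2}$ lies in $[1,(1-\delta/2)^{-3/2}]$ while $\int_0^{\delta\ln L}e^{-\rho}d\rho=1-L^{-\delta}$, and the tail on $[\delta\ln L,2\ln L]$ is bounded by $\tilde\lambda^3 L^{-\delta}$, which vanishes uniformly in $\varepsilon\le 1$ thanks to $\tilde\lambda^3\le(1+\ln L)^{3/2}$.

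The same recipe handles (\ref{ode03a}). The integrating factor $s^{-3/2}$ applied to (\ref{wr70}) gives
\begin{align*}
m_4(\tilde\lambda^2)=\tilde\lambda^3\int_1^{\tilde\lambda^2}\frac{4L(t)^2\,m_2(t)}{t^{5/2}}dt,
\end{align*}
where $m_4:=\mathbb{E}|\tilde\phi|^4$. Plugging in the already-established $m_2(t)\approx\varepsilon^2 L(t)^2/(2t)$ converts the integrand into $2\varepsilon^2 L(t)^4/t^{7/2}$ up to a multiplicative factor $1+o(1)$, and the same Laplace argument, now applied with the exponential rate $4/\varepsilon^2$ of $L^4$, yields $\varepsilon^2 L^4/(4\tilde\lambda^7)$ for the integral and hence $m_4\approx\varepsilon^4 L^4/(2\tilde\lambda^4)$.

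The main delicacy is securing these Laplace approximations uniformly in $\varepsilon\in(0,1]$, because the two natural regimes behave qualitatively differently: for $\varepsilon$ bounded below, $\tilde\lambda^2\to\infty$ drives all correction terms to zero, whereas for $\varepsilon\to 0$ the scale $\tilde\lambda^2$ may remain close to $1$ but $\varepsilon^2$ itself is small. The uniform estimate $\varepsilon^2/\tilde\lambda^2\le 1/\ln L$ covers both regimes simultaneously, and it is also this estimate that ensures substituting the leading-order expression for $m_2$ into the integrand for $m_4$ introduces only a $1+o(1)$ multiplicative error, so that the asymptotic formulas of Lemma \ref{lem:growth-proxy-phi} compound consistently.
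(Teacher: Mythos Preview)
Your proof is correct and follows essentially the same approach as the paper: the same integrating factors $s^{-1/2}$ and $s^{-3/2}$, the same change of variables near the upper endpoint (your $\rho$ is the paper's $2\hat y$), and the same uniformity mechanism $\varepsilon^2/\tilde\lambda^2\le 1/\ln L$ to make the Laplace approximation hold for all $\varepsilon\le 1$. The only cosmetic difference is that for $m_4$ the paper inserts the exact integral representation of $m_2$ and treats the resulting double integral, whereas you insert the already-established asymptotic; both routes are valid and lead to the same conclusion.
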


Note that while (\ref{wr67}) \& (\ref{wr70}) determine the evolution
of $\E|\tilde\phi|^2$ and $\E|\tilde\phi|^4$, the ODEs of Lemma \ref{lem:evolution-f}
do not close on the level of
$\E\det\tilde F$, $\E(\det\tilde F)^2$, $\E|\tilde F|^2$, $\E|\tilde F|^4$.
However, both lemmas combined provide precise asymptotics for these quantities.
The rest of the paper is organized as follows. In order to prove Lemma \ref{lem:evolution-f} 
and Lemma \ref{lem:evolution-proxy-phi}, we identify a number of quadratic variations for 
the drivers $ d \phi $ and its derivatives. We introduce the necessary notation in 
Section \ref{sec:algebraic-prelim}. Therein we also collect the necessary identities, 
while the main computational work is deferred to Appendix 
\ref{ref:quadratic-variation}. Proofs of the above results are given in Section 
\ref{section:evolution} (where we derive the differential equations for moments), 
in Section \ref{section:proofs} (where identify the asymptotic), 
and Section \ref{section:proof-mr} (where we 
analyze a backward Kolmogorov equation for $|\tilde F|^2$ as needed for
(\ref{ao13})).


\section{Proofs}\label{s:proofs}

\subsection{Identification of quadratic variations}\label{sec:algebraic-prelim}

In order to establish Lemmas \ref{lem:evolution-f} and 
\ref{lem:evolution-proxy-phi}, 
in view of (\ref{wr04}) and (\ref{wr36}) and It\^{o} calculus,
we need to identify the quadratic variations of the drivers $ \nabla d \psi $, $ d\phi $, 
$ \nabla d \phi $, and $\nabla^2 d\phi$ by relating them to
the one of $d\psi$, see (\ref{in09}). 
The underlying principle that a derivative gains a factor of $ L^{-1} $ is best seen for $ \psi $,~i.~e.~
\begin{equation}\label{qvarpsi}
[ \nabla d \psi \cdot \nabla d \psi ]
= \frac{ [ d \psi \, d \psi ] }{ L^2 }.
\end{equation}
Indeed, on the level of covariance functions and their Fourier-transforms we have
$$
c_{ \nabla \psi_{ L_+ } -  \nabla \psi_{ L } } = - \nabla^2 c_{ \psi_{ L_+ } - \psi_{ L } } 
\quad { \rm so ~ that } \quad
\tr \mathcal{F} c_{ \nabla \psi_{ L_+ } -  \nabla \psi_{ L } } = | k |^2 \mathcal{F} c_{ \psi_{ L_+ } - \psi_{ L } } 
~ { \rm distributionally }.
$$
From the latter identity, it follows that
$$ 
\frac{1}{L_+^2} \E ( \psi_{ L_+ } - \psi_{ L } )^2
\leq  \E | \nabla \psi_{ L_+ } - \nabla \psi_{ L } |^2
\leq  \frac{1}{L^2} \E ( \psi_{ L_+ } - \psi_{ L } )^2
$$
for $ L+ \ge L $. Using the characterization of $ [ d \psi \, d \psi ] $ and $ [ \nabla d \psi \cdot \nabla d \psi ] $ in Footnote \ref{footnote:quadvar}, we have
$$
\begin{aligned}
\int_{\ell_1}^{\ell_2} [ \nabla d \psi \cdot \nabla d \psi ]
&= \lim_{ { \rm fineness } ( \{ L_k \} ) \rightarrow 0 }  \sum_{ k } \E | \nabla \psi_{ L_{k} } - \nabla \psi_{ L_{k-1} } |^2 \\
&= \lim_{ { \rm fineness } ( \{ L_k \} ) \rightarrow 0 }  \sum_{ k } \E  \frac{1}{L_k^2}( \psi_{ L_{k} } - \psi_{ L_{k-1} } )^2 \\
&= \int_{ \ell_1 }^{ \ell_2 } \frac{ [ d \psi \, d \psi ] }{ L^2 },
\end{aligned}
$$
which establishes (\ref{qvarpsi}).

\medskip

We will now collect the necessary formulas for $ d \phi $, $ \nabla d \phi $ and 
$ \nabla^2 d \phi $. In view of (\ref{wr01}), the most
natural object out of these three is 
$ \nabla d \phi $ as it is directly related to $ d \psi $. Since we
move most computations to the appendix, 
we however start the presentation with $ d \phi $.

\begin{lemma}[Quadratic variation of $ d\phi $]\label{lem:quad-var-phi}
Given a cotangent vector $ \xi $, it holds\footnote{We use the symbol $ . $ to denote a 
canonical pairing between primal and dual space, 
here $ \xi . \dot x = \xi_i \dot x^i $, to be distinguished from the inner
product $\dot x\cdot \dot y=\sum_{i=1}^2\dot x^i\dot y^i$.}
\begin{align}\label{wr75}
[ \xi.  d\phi \, \xi. d\phi ] =
| \xi |^2 \frac{L^2 d\tilde\lambda^2}{2\tilde\lambda^2}.
\end{align}
As a special case of this identity, we have
\begin{align}\label{wr75bis} 
[d\phi \cdot d\phi] = L^2 \frac{d \tilde\lambda^2}{\tilde\lambda^2}.
\end{align}
\end{lemma}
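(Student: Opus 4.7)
The plan mirrors the verification of \eqref{qvarpsi} carried out in the paragraphs preceding the lemma: I compute $\E | \xi . (\phi_{L_+} - \phi_L) |^2$ in closed form via the Fourier representation of $d\phi$, and then identify the quadratic variation by passing to the limit in Footnote \ref{footnote:quadvar}.

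The key input is the Fourier formula $\mathcal{F} d\phi(k) = i \tilde\lambda^{-1} \mathcal{F} d\psi(k)\,(Jk)^*/|k|^2$ stated in the paragraph just before \eqref{so14}; it follows directly from \eqref{wr01} by splitting the tensor equation into its Helmholtz pieces (contracting the cotangent index with $k^j$ annihilates the divergence-free term $J \nabla d\sigma$). Pairing with $\xi$ and invoking the increment covariance \eqref{in08} yields
\begin{align*}
\mathcal{F} c_{ \xi . (\phi_{L_+} - \phi_L) }(k) = \frac{\varepsilon^2}{\tilde\lambda^2}\,I(L_+^{-1} < |k| \le L^{-1})\,\frac{(\xi \cdot Jk)^2}{|k|^6}.
\end{align*}

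Evaluating the corresponding variance at $x = 0$ in polar coordinates, the angular integral $\int_0^{2\pi}(\xi\cdot Jk)^2\,d\theta = \pi |\xi|^2 |k|^2$ (since $Jk \perp k$ with $|Jk| = |k|$) reduces the radial integral over the thin shell $L_+^{-1}<|k|\le L^{-1}$ to $\int d|k|/|k|^3 = \tfrac{1}{2}(L_+^2 - L^2)$, giving
\begin{align*}
\E | \xi . (\phi_{L_+} - \phi_L) |^2 = \frac{|\xi|^2 \varepsilon^2}{4 \tilde\lambda^2}(L_+^2 - L^2).
\end{align*}
Inserting this into the Riemann sum from Footnote \ref{footnote:quadvar} (the limit there is deterministic by Gaussianity), and using $L_+^2 - L^2 \approx 2L\,dL$ together with $d\tilde\lambda^2 = \varepsilon^2\, dL/L$, produces \eqref{wr75}. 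The special case \eqref{wr75bis} is then immediate: take $\xi$ to run over the dual orthonormal basis $\{e^1, e^2\}$ and sum, which supplies the extra factor $d = 2$.

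The main bookkeeping concern is keeping the Fourier conventions and the tangent/cotangent identification consistent — in particular, reading $(Jk)^*$ as the tangent vector associated to $Jk$ via the Euclidean inner product — but once that is fixed every step is elementary and exactly parallels the derivation of \eqref{qvarpsi}.
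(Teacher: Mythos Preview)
Your argument is correct but takes a genuinely different route from the paper. You compute the increment variance $\E|\xi.(\phi'_{L_+}-\phi'_L)|^2$ directly from the Fourier representation of $d\phi$ and the increment covariance \eqref{in08}, then integrate in polar coordinates; this is elementary and self-contained. The paper instead relates $[\xi.d\phi\;\xi.d\phi]$ to the quadratic variation of $\nabla d\phi$ by the same covariance-function argument that produced \eqref{qvarpsi}, obtaining $L^2[\xi.d\phi\;\xi.d\phi]=\sum_i(\xi\otimes e_i)\diamond(\xi\otimes e_i)\,\frac{d\tilde\lambda^2}{\tilde\lambda^2}$, and then appeals to the contraction identity \eqref{wr65} for $\diamond$ established in Lemma~\ref{lem:quad-var-nabla-phi}. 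Your approach avoids the $\diamond$-machinery entirely, at the cost of redoing a Fourier integral that the $\diamond$-calculus packages once and for all; the paper's approach keeps all quadratic-variation identities anchored to the single form $\diamond$, which is why Lemma~\ref{lem:quad-var-phi} is stated before but proved after Lemma~\ref{lem:quad-var-nabla-phi}. One cosmetic point: the process you denote $\phi_L$ is what the paper writes $\phi'_L$ (the integral of $d\phi$, cf.~Footnote~\ref{footnote:dphi}); and the factor $\tilde\lambda^{-2}$ in your increment formula is only exact in the infinitesimal limit since $\tilde\lambda$ varies with $L$, but the Riemann-sum passage handles this.
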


A proof of Lemma \ref{lem:quad-var-phi} is given after the next lemma. 
Identifying the other variations is more involved because the processes are tensor
valued (fields). We thus consider general linear forms $G$ on these tensor spaces,
which act as observables. All quadratic variations
come as positive semi-definite bilinear forms on the linear space of $G$'s; 
like all quadratic variations of processes with independent increments, 
these forms are deterministic. Since the processes arise from evaluating
a spatially stationary field in $x$, these forms are independent of $x$,
see Footnote \ref{footnote:quadvar}.
As a consequence, since $\nabla d\phi$ is 
the increment of such a process with values in the space
of endomorphisms of cotangent space,
the corresponding quadratic variation is such a deterministic
form on the dual space, which is canonically isomorphic\footnote{
via the pairing $(\xi\otimes\dot x).F$
$:=(F\xi).\dot x$ for a tangent vector $\dot x$, a cotangent vector $\xi$, and 
an endomorphism $F$ of cotangent space, and where the $.$ on the r.~h.~s.~denotes 
the natural pairing between cotangent and tangent vectors} to
\begin{align*}
\mbox{endomorphisms of tangent space}\cong
(\mbox{cotangent space}) \otimes (\mbox{tangent space}).
\end{align*}
Likewise, 
the quadratic variation of $\nabla^2d\phi$ is a deterministic and $x$-independent
positive semi-definite bilinear form on 
$(\mbox{cotangent space}) \otimes (\mbox{tangent space}) \otimes (\mbox{tangent space})$.
Let us summarize this in the following definition.

\begin{definition}\label{defn:quad-var}
\begin{enumerate}[\hspace{0cm}i)]
\item For $G$ $\in(\mbox{cotangent space}) \otimes (\mbox{tangent space})$, we write
\begin{align}\label{wr51}
G \diamond G \frac{d\tilde\lambda^2}{\tilde\lambda^2} \coloneqq [G.\nabla d\phi \, G.\nabla d\phi ],
\end{align}
where $ G . \nabla d \phi $ denotes the natural pairing
which componentwise is given by $ G_j^i \partial_i d \phi^j $.

\vspace{5pt}

\item For $ \textit{\textbf{G}} \in (\mbox{cotangent space}) \otimes(\mbox{tangent space}) \otimes(\mbox{tangent space}) $, we let
\begin{align}\label{wr80}
\textit{\textbf{G}} \bullet \textit{\textbf{G}} \frac{d\tilde\lambda^2}{L^2\tilde \lambda^2}
\coloneqq [ \textit{\textbf{G}}.\nabla^2d\phi \, \textit{\textbf{G}}.\nabla^2d\phi ],
\end{align}
where $ \textit{\textbf{G}}. \nabla^2d\phi $ denotes the natural pairing
which componentwise is given by  $ \textit{\textbf{G}}^{ij}_{k} \partial_i\partial_j d \phi^k $.
\end{enumerate}
\end{definition}

Note that via polarization, (\ref{wr51}) and (\ref{wr80}) indeed define 
symmetric bilinear forms.  
There is no need to track the mixed covariation of zeroth, first, 
and second derivative:

\begin{remark}\label{rmk:vanishing-covariation}
For every $ \xi \in \mbox{cotangent space} $, 
$G$ $\in(\mbox{cotangent space}) \otimes (\mbox{tangent space})$ and
$ \textit{\textbf{G}} \in (\mbox{cotangent space}) \otimes(\mbox{tangent space}) \otimes(\mbox{tangent space}) $
the mixed covariations $ [ \xi . d \phi \, G.\nabla d\phi] $ and 
$[G.\nabla d\phi\,\textit{\textbf{G}}.\nabla^2 d\phi]$
vanish because of parity. By this we mean that via \eqref{wr01}, 
the process $\nabla d\phi$ inherits the evenness (in law) of $\psi$ under point reflections 
in space; which implies that $d\phi$ and $ \nabla \partial_i d \phi $ are odd.
Since their co-variation is deterministic by independence of increments, it must vanish.
\end{remark}

In Subsections \ref{ss:var-nabla-phi} and \ref{ss:var-nabla-nabla-phi} 
we identify natural bases that diagonalize these forms, based on their invariance under
the action of rotations:
In (\ref{wr82}), we introduce such a basis
$ (E^n)_{ n = 1,2,3,4 } $ of $(\mbox{cotangent space})\otimes(\mbox{tangent space})$.
In (\ref{wr85}) we define a suitable basis $ ( \textit{\textbf{E}}^n )_{ n = 1, \hdots, 6 } $ 
of $(\mbox{cotangent space})\otimes(\mbox{tangent space})
\otimes_{sym}(\mbox{tangent space})$, where the subscript $sym$ indicates the symmetry in
the last two factors, to which we may restrict in view 
of $\nabla^2d\phi$ in view of the symmetry of second derivatives.
In the rest of this section we record some contractions of these forms for later use.

\medskip

We start with the contractions of $\diamond$ that are relevant for
computing $d|\tilde F|^2$ and $d|\tilde F|^4$.  For $d|\tilde F|^2$, we
represent the (squared) Frobenius norm $|F|^2= \tr F^*F=\tr FF^*$ 
of an endomorphism as the following sum of quadratic form
\begin{align}\label{ho38}
|F|^2=\sum_{i,j = 1}^2 (e^i\otimes e_j.F)(e^i\otimes e_j.F).
\end{align}
In view of the first r.~h.~s.~term of (\ref{wr36}), we apply (\ref{ho38}) to
$F=\tilde F\nabla d\phi$, so that we actually need to monitor the more general quadratic form
on the space of endomorphisms of cotangent space
\begin{align}\label{ho38bis}
F\mapsto \sum_{i,j = 1}^2 (Ge^i\otimes e_j.F)(Ge^i\otimes e_j.F),
\end{align}
where the endomorphism $G$ on tangent space is the placeholder for $\tilde F^t$;
the outcome is given in (\ref{wr38d}).
We now turn to $d|\tilde F|^4$;
in order to identify the contribution of the martingale term in (\ref{ho32}) to (\ref{ho33}),
we need to characterize the quadratic variation of $G.\nabla d\phi$ 
for a symmetric endomorphism $G$ on tangent space, which plays the role of $(F^*F)^t$;
see (\ref{wr45bis}) for the outcome.
Two contractions of $\diamond$ are relevant for $d|\tilde\phi|^2$ and $d|\phi|^4$:
In order to identify the contribution of the second r.~h.~s.~term of (\ref{wr04})
to $|\tilde\phi|^2$, we need to characterize the quadratic variation of 
$\dot x\cdot\nabla d\phi^i$ $=(e^i\otimes\dot x).\nabla d\phi$,
for a tangent vector $\dot x$ playing the role of $\tilde\phi$, see (\ref{wr65bis})
for the result.
For $|\tilde\phi|^4$ we need (\ref{wr72}).
Finally, the contraction (\ref{wr65}) is needed to pass from the quadratic variation of 
$\nabla d\phi$ to the one of $d\phi$. The factor $\frac{1}{2}$ in (\ref{wr38d}), (\ref{wr65bis}),
and (\ref{wr65}) has the same origin. The factors $\frac{1}{2}$ in (\ref{wr38d}) and
$\frac{1}{8}$ in (\ref{wr45bis}) are crucial.

\begin{lemma}[Quadratic variation of $\nabla d\phi$ for $|\tilde F|^2$,
$|\tilde F|^4$, $|\tilde\phi|^2$, $|\tilde\phi|^4$]
\label{lem:quad-var-nabla-phi}
For an endomorphism $G$ of tangent space we have in terms of the trace-free part
$G_{sym}$ $- {\textstyle\frac{1}{2}} (\tr G_{sym}){\rm id}$
of its symmetric part $G_{sym}=\frac{1}{2}(G+G^*)$ and its skew-symmetric part
$G_{skew}:=\frac{1}{2}(G-G^*)$
\begin{align}\label{wr45base}
G \diamond G= \frac{1}{4}|G_{sym}- {\textstyle\frac{1}{2}} (\tr G_{sym}){\rm id}|^2 +\frac{1}{2} |G_{skew}|^2,
\end{align}
which we use in form of
\begin{equation}\label{wr45bis}
G\diamond G = \frac{1}{8}(\tr G)^2-\frac{1}{2}\det G\quad\mbox{provided}\;G\;\mbox{is symmetric}.
\end{equation}
As a consequence of (\ref{wr45base}) we have\footnote{
The product $Ge^i\otimes e_j$ denotes the composition of endomorphisms (on tangent space),
that is, $G e^i\otimes e_j=e^i\otimes Ge_j$.
In general, we often suppress brackets based on the convention that
$\otimes$ binds more tightly than composition, pairing, and inner products like
$\cdot$, $\diamond$, or $\bullet$.}
%
\begin{equation}\label{wr38d}
\sum_{i,j=1}^2 G e^i \otimes e_j \diamond G e^i\otimes e_j = \frac{1}{2}| G |^2
\end{equation}
and for any cotangent vector $\xi$ and any tangent vector $\dot x$
\begin{align}
\sum_{i=1}^2e^i \otimes \dot x \diamond e^i \otimes \dot x
&= \frac{1}{2} | \dot x  |^2,\label{wr65bis}\\
\dot x^* \otimes \dot x \diamond \dot x^* \otimes \dot x
&= \frac{1}{8} |\dot x|^4,\label{wr72}\\
\sum_{i=1}^2 \xi \otimes e_i \diamond \xi \otimes e_i
&= \frac{1}{2} |\xi|^2,\label{wr65}
\end{align}
where $\dot x^*$ denotes the cotangent vector associated
to $\dot x$ by the 
inner product, in coordinates $ (\dot x^*)_i = \dot x^i $. 
\end{lemma}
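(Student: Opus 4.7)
The plan is to derive the master identity \eqref{wr45base} first, by combining a Fourier-symbol calculation with $SO(2)$-invariance, and then to obtain the remaining identities as linear-algebraic corollaries.

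I would begin by reading off from \eqref{wr01} (equivalently from the Fourier representation $\mathcal{F}d\phi=i\frac{\mathcal{F}d\psi}{\tilde\lambda}\frac{(Jk)^{*}}{|k|^{2}}$ recorded in the excerpt) the symbol
\[
\mathcal{F}\bigl(G.\nabla d\phi\bigr)(k)=-\frac{k^{T}G(Jk)^{*}}{\tilde\lambda\,|k|^{2}}\,\mathcal{F}d\psi(k).
\]
Repeating the radial-plus-angular argument that produced \eqref{qvarpsi}, the radial integral over each dyadic shell $L_{+}^{-1}\le|k|\le L^{-1}$ contributes $\varepsilon^{2}\ln(L_{+}/L)=d\tilde\lambda^{2}$ (since the multiplier is degree-zero homogeneous), so definition \eqref{wr51} reduces to the angular average
\[
G\diamond G=\frac{1}{2\pi}\int_{0}^{2\pi}\bigl|\hat k^{T}G(J\hat k)^{*}\bigr|^{2}\,d\theta,\qquad \hat k=(\cos\theta,\sin\theta).
\]
The integrand is $SO(2)$-invariant under simultaneous rotation of $\hat k$ and conjugation of $G$, and the irreducible decomposition $\mathrm{End}(\text{tan})\cong\mathbb{R}\,\mathrm{id}\oplus\mathbb{R}J\oplus(\text{sym-traceless})$ (two inequivalent trivial one-dimensional summands plus one two-dimensional irreducible; no mixed trace/skew cross-term survives orientation-reversing reflections, under which $J$ changes sign) forces the ansatz
\[
G\diamond G=c_{1}(\tr G)^{2}+c_{2}|G_{skew}|^{2}+c_{3}\bigl|G_{sym}-\tfrac{1}{2}(\tr G_{sym})\mathrm{id}\bigr|^{2}.
\]

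To pin down the three constants I would plug in three test endomorphisms. The first identity of \eqref{ho36} gives $\tr\nabla d\phi=0$, hence $c_{1}=0$. The second identity of \eqref{ho36}, combined with \eqref{in09}, gives $J.\nabla d\phi=-d\psi/\tilde\lambda$ and thus $J\diamond J=1$; since $|J_{skew}|^{2}=|J|^{2}=2$ this forces $c_{2}=\tfrac{1}{2}$. The remaining constant $c_{3}=\tfrac{1}{4}$ follows either from evaluating the angular integral on a single symmetric-traceless test matrix such as $G=\mathrm{diag}(1,-1)$ (where direct computation yields $\tfrac{1}{2}$ against $|G_{sym}^{\circ}|^{2}=2$), or in one shot by expanding $\hat k^{T}G(J\hat k)^{*}$ in the Fourier modes $1,\cos 2\theta,\sin 2\theta$ and matching the resulting coefficients against the entries of $G_{sym}^{\circ}$ and $G_{skew}$. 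This establishes \eqref{wr45base}.

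The rest is linear algebra. For \eqref{wr45bis}, symmetry of $G$ kills the skew term, $|G-\tfrac{1}{2}(\tr G)\mathrm{id}|^{2}=|G|^{2}-\tfrac{1}{2}(\tr G)^{2}$, and the two-dimensional identity $|G|^{2}=(\tr G)^{2}-2\det G$ for symmetric $G$ rewrites this as $\tfrac{1}{2}(\tr G)^{2}-2\det G$; the prefactor $\tfrac{1}{4}$ then produces the stated right-hand side. Identity \eqref{wr72} is the direct instance of \eqref{wr45bis} applied to the symmetric rank-one tensor $\dot x^{*}\otimes\dot x$, whose trace is $|\dot x|^{2}$ and whose determinant vanishes, yielding $\tfrac{1}{8}|\dot x|^{4}$. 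For \eqref{wr38d}, \eqref{wr65bis} and \eqref{wr65} I would expand bilinearly: each contraction is controlled by a single rotation-invariant building block $\sum_{i}(e^{i}\otimes e_{k})\diamond(e^{i}\otimes e_{l})$ (or its cotangent-transpose), which by $SO(2)$-invariance must equal a scalar multiple of $\delta_{kl}$, and that scalar is seen to be $\tfrac{1}{2}$ by testing on $G=\mathrm{id}$, $\dot x=e_{1}$, or $\xi=e^{1}$. The main obstacle I anticipate is not conceptual but notational: one must stay rigorously consistent with the up/down index convention of Definition \ref{defn:quad-var} when identifying endomorphisms of tangent space with elements of $\text{cotangent}\otimes\text{tangent}$, so that $G_{sym}$, $G_{skew}$ and the various pairings with $\nabla d\phi$ refer to the same underlying object throughout.
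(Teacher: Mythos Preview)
Your proof is correct and complete, but it takes a genuinely different route to the master identity \eqref{wr45base} than the paper does.

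You obtain $\diamond$ as an explicit angular average,
\[
G\diamond G=\frac{1}{2\pi}\int_{0}^{2\pi}\bigl(\hat k^{T}G(J\hat k)^{*}\bigr)^{2}\,d\theta,
\]
and then pin down the three constants by evaluating this integral on $\mathrm{id}$, $J$, and $\mathrm{diag}(1,-1)$; the last of these requires actually computing $\frac{1}{2\pi}\int_0^{2\pi}\sin^2 2\theta\,d\theta=\tfrac12$. The paper, by contrast, never writes down the angular integral. In Appendix~\ref{ss:var-nabla-phi} it diagonalizes $\diamond$ with respect to the basis $E^1,\dots,E^4$ of \eqref{wr82} using only: (i) rotation invariance, which gives conformality on the trace-free symmetric block; (ii) the two pointwise identities \eqref{ho36}, which give $E^4$ in the null space and $E^3\diamond E^3=1$; and (iii) the stationarity symmetry $[\partial_i d\phi^k\,\partial_j d\phi^l]=[\partial_j d\phi^k\,\partial_i d\phi^l]$, which yields the relation $E^1\diamond E^1+E^2\diamond E^2=E^3\diamond E^3$ and hence $c_3=\tfrac14$ without any integration. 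So where you compute a trigonometric integral, the paper substitutes a structural argument based on the exchangeability of spatial derivatives in a stationary covariance.

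Two minor remarks. First, your appeal to orientation-reversing reflections to kill the trace--skew cross term is correct (the angular-average representation is visibly $O(2)$-invariant), but it is also redundant: once $\tr\nabla d\phi=0$ gives $\mathrm{id}.\nabla d\phi=0$ identically, $\mathrm{id}$ lies in the null space of $\diamond$, which already annihilates that cross term. Second, your treatment of the corollaries \eqref{wr45bis}--\eqref{wr65} is essentially the same as the paper's: both reduce \eqref{wr65bis} and \eqref{wr65} to a single basis vector by rotation invariance and read off the constant from \eqref{wr45base}, and both derive \eqref{wr38d} from \eqref{wr65bis} via $Ge^i\otimes e_j=e^i\otimes Ge_j$.

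What your approach buys is directness: the angular-average formula makes $\diamond$ completely explicit from the outset, and the constants fall out of elementary calculus. What the paper's approach buys is that it isolates exactly which features of the driver are responsible for each constant (divergence-freeness for $c_1=0$, the link \eqref{ho36} to $d\psi$ for $c_2$, stationarity for the ratio $c_3/c_2$), which is the template reused for the harder three-tensor form $\bullet$ in Appendix~\ref{ss:var-nabla-nabla-phi}.
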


\begin{proof}[Proof of Lemma \ref{lem:quad-var-nabla-phi}.]$\mbox{}$ 
\medskip


\textit{Argument for (\ref{wr45base})}.
According to Appendix \ref{ss:qv1}, both $\diamond$ and the standard inner product given by
\begin{align}\label{ho40}
G:G'= \tr G^* G'
\end{align}
are diagonal w.~r.~t.~the decomposition of endomorphism space
\begin{align}\label{wr50}
\mbox{trace-free symmetric}\oplus\mbox{skew}\oplus\mbox{isotropic}.
\end{align}
More precisely, it follows from (\ref{ho41}) and (\ref{wr52}) \& (\ref{wr53}) that 
$\diamond$ is $\frac{1}{4}$ times $:$ on the first component, 
whereas the factor is $\frac{1}{2}$ on the second, and $0$ on the last.
Since $G_{sym}-\frac{1}{2}(\tr G_{sym}){\rm id}$ and $G_{skew}$ are the $:$-orthogonal
projections on the first and second component, respectively, (\ref{wr45base}) follows.

\medskip

\textit{Argument for (\ref{wr45bis})}. Passing from
(\ref{wr45base}) to (\ref{wr45bis}) reduces to the identity for symmetric matrices $G$
\begin{align*}
2|G-\frac{1}{2}(\tr G){\rm id}|^2=(\tr G)^2-4\det G,
\end{align*}
which by definition of $|\cdot|^2$ is equivalent to
$|G|^2=(\tr G)^2-2\det G$, which in terms of the eigenvalues amounts to
$\lambda_1^2+\lambda_2^2=(\lambda_1+\lambda_2)^2-2\lambda_1\lambda_2$ and thus holds true.

\medskip

\textit{Argument for (\ref{wr65bis})}.
%
%
In view of the invariance of $\diamond$ under the action (\ref{ho28}) 
of rotations we have in particular for the rotation $J$ by $\frac{\pi}{2}$
\begin{align*}
e^i\otimes\dot x\diamond e^i\otimes\dot x
=J.e^i\otimes\dot x\diamond J.e^i\otimes\dot x
=J^{-t}e^i\otimes J\dot x \diamond J^{-t}e^i\otimes J\dot x.
\end{align*}
Since $\{\pm J^{-t}e^i\}_{i=1,2}$ is a permutation of $\{\pm e^i\}_{i=1,2}$,
the quadratic form in $\dot x$ on the
l.~h.~s.~of (\ref{wr65bis}) is invariant under $J$, and thus isotropic.
Hence it is enough to establish (\ref{wr65bis}) for $\dot x=e_j$, that is
\begin{align}\label{ho45}
\sum_{i=1}^2e^i\otimes e_j\diamond e^i\otimes e_j=\frac{1}{2}.
\end{align}

\medskip

We now derive (\ref{ho45}) from (\ref{wr45base}). For this purpose, we rewrite (\ref{wr45base})
as
\begin{align}\label{ho46}
G\diamond G=\frac{1}{4}|G|^2-\frac{1}{8}\big((\tr G)^2-(\tr J^*G)^2\big),
\end{align}
which follows from the $:$-related identities
\begin{align*}
|G_{sym}-\frac{1}{2}(\tr G_{sym}){\rm id}|^2 +|G_{skew}|^2
&=|G-\frac{1}{2}(\tr G){\rm id}|^2
=|G|^2-\frac{1}{2}(\tr G)^2,\\
|G_{skew}|^2&=|\frac{1}{2}(\tr J^*G)J|^2=\frac{1}{2}(\tr J^*G)^2.
\end{align*}
It remains to note that $G^i:=e^i\otimes e_j$ satisfies $|G^i|^2=1$ 
and thus $\sum_{i=1}^2|G^i|^2=2$.
It also satisfies $(\tr G^i)^2=\delta^i_j$ while $(\tr J^* G^i)^2=1-\delta^i_j$ and
thus $\sum_{i=1}^2((\tr G^i)^2-(\tr J^*G^i)^2)=0$.

\medskip

\textit{Argument for (\ref{wr72})}. This follows from
(\ref{ho46}) for $G=\dot x^*\otimes\dot x$ which satisfies $|G|^2=|\dot x|^4$,
$\tr G=|\dot x|^2$ and $\tr J^*G=0$.

\medskip

\textit{Argument for (\ref{wr65})}. We read off 
(\ref{wr45base}) that $\diamond$ is invariant under $G\mapsto G^*$.
Since $(e^i\otimes\dot x)^*=\dot x^*\otimes e_i$, (\ref{wr65}) thus follows from  
(\ref{wr65bis}) for $\dot x^*=\xi$.

\medskip

\textit{Argument for (\ref{wr38d})}.
Noting that
$Ge^i\otimes e_j=e^i\otimes Ge_j$ 
we see that the l.~h.~s.~of (\ref{wr38d}) is given by
\begin{align*}
\sum_{j=1}^2\sum_{i=1}^2e^i\otimes Ge_j\diamond e^i\otimes Ge_j
\stackrel{(\ref{wr65bis})}{=}\frac{1}{2}\sum_{j=1}^2|Ge_j|^2=\frac{1}{2}|G|^2,
\end{align*}
as desired.
\end{proof}

We are now ready to characterize also the quadratic variation of $ d \phi $.

\begin{proof}[Proof of Lemma \ref{lem:quad-var-phi}.]
For the purpose of this argument, 
we introduce the tangent-vector valued
process\footnote{which is not $L\mapsto\phi_L$, see Footnote \ref{footnote:dphi}  }
$L\mapsto\phi_L'$ that is obtained from integrating $d\phi$.
We start as for (\ref{qvarpsi}) and
observe that on the level of covariance functions we have
$$
- \nabla^2 c_{ \xi . (\phi_{L_+}'-\phi_{L}') } 
= c_{ \nabla(\xi . (\phi_{L_+}' - \phi_{L}') ) }
\quad\mbox{and thus}\quad
- \triangle c_{ \xi . (\phi_{L_+}'-\phi_{L}')} 
= \sum_{i=1}^2c_{\xi . \partial_i(\phi_{L_+}' - \phi_{L}') }.
$$
As for $\nabla\tilde\phi_L$,
we interpret $\nabla \phi_L' $ as having values in the space of endomorphisms of cotangent space
$\cong(\mbox{tangent space})\otimes(\mbox{cotangent space})$, which implies
$\xi . \partial_i\phi_L' =( \xi \otimes e_i).\nabla\phi_L' $.
Hence on the Fourier level, the above translates to
$$
| k |^2 \mathcal{F} c_{ \xi . (\phi_{L_+}' - \phi_{L}' ) }
= \sum_{i=1}^2 \mathcal{F} c_{ ( \xi \otimes e_i ) .(\nabla\phi_{L_+}' - \nabla\phi_{L}' ) }.
$$
Therefore, the same argument that lead to (\ref{qvarpsi}) yields
\begin{align*}
L^2 [ \xi . d \phi \, \xi . d \phi ]
&= \sum_{i=1}^2 [ ( \xi \otimes e_i ) . \nabla d \phi 
\, ( \xi \otimes e_i) . \nabla d \phi ]\\
& \stackrel{(\ref{wr51})}{=} \sum_{i=1}^2
\xi \otimes e_i \diamond \xi \otimes e_i
\frac{d \tilde\lambda^2 }{ \tilde\lambda^2 }
\stackrel{(\ref{wr65})}{=}
|\xi|^2 \frac{d \tilde\lambda^2 }{ 2\tilde\lambda^2 },
\end{align*}
as desired.
\end{proof}

The quadratic variation $\bullet$ arises from the second r.~h.~s.~term of the It\^{o}
SDE (\ref{wr36}). The following contraction comes up when computing $d|\tilde F|^2$,
and is easily made explicit.
 
\begin{lemma}[Quadratic variation of $\nabla^2d\phi$ for $d|\tilde F|^2$]
\label{lem:quad-var-nabla-nabla-phi}
For any tangent vector $\dot x$ we have
\begin{equation}\label{dc03}
\sum_{i,j=1}^2 e^i\otimes e_j\otimes\dot x\bullet e^i\otimes e_j\otimes\dot x
= \frac{1}{2}|\dot x|^2.
\end{equation}
\end{lemma}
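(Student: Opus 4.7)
The strategy is to reduce (\ref{dc03}) to the already-established (\ref{wr65bis}) via a general ``differentiation'' identity relating $\bullet$ to $\diamond$, obtained in close analogy with the covariance argument that yielded (\ref{qvarpsi}) (and was reused in the derivation of (\ref{wr75})).

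First I would establish the bridge identity
\[
\sum_{l=1}^{2}\,(G\otimes e_l)\,\bullet\,(G\otimes e_l)\;=\;G\diamond G
\qquad\mbox{for every }G\in(\mbox{cotangent space})\otimes(\mbox{tangent space}).
\]
The observation behind this is that $(G\otimes e_l).\nabla^2 d\phi=\partial_l(G.\nabla d\phi)$, so the scalar process $f:=G.\nabla d\phi$ stands to $(G\otimes e_l).\nabla^2 d\phi$ exactly as $d\psi$ stands to $\nabla d\psi$ in (\ref{qvarpsi}). Repeating that Fourier-level argument: the shell structure $L_+^{-1}<|k|\le L^{-1}$ of the support of $\mathcal F c_{\psi_{L_+}-\psi_L}$, see (\ref{in08}), is inherited by the support of $\mathcal F c_{f_{L_+}-f_L}$ through the pointwise (in $k$) relation (\ref{wr01}); one extra spatial derivative, summed over its index, contributes a factor $|k|^2$, which on this shell is pinched between $L_+^{-2}$ and $L^{-2}$. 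Passing to quadratic variation yields $\sum_l[\partial_l f,\partial_l f]=L^{-2}[f,f]$, and the $L^{-2}$ built into the normalization of $\bullet$ in (\ref{wr80}) precisely cancels the $L^{-2}$ built into $\diamond$ via (\ref{wr51}), delivering the bridge identity.

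Second, I would invoke the symmetry of second derivatives: since $\textit{\textbf{G}}.\nabla^2 d\phi=\textit{\textbf{G}}_k^{ij}\partial_i\partial_j d\phi^k$ is invariant under the swap $i\leftrightarrow j$, the bilinear form $\bullet$ only sees the symmetrization of its arguments in the last two (tangent) factors. In particular,
\[
e^i\otimes e_j\otimes\dot x\,\bullet\,e^i\otimes e_j\otimes\dot x
\;=\;e^i\otimes\dot x\otimes e_j\,\bullet\,e^i\otimes\dot x\otimes e_j.
\]

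Finally, applying the bridge identity with $G=e^i\otimes\dot x$ to carry out the sum in $j$, and then summing in $i$ via (\ref{wr65bis}), one obtains
\[
\sum_{i,j=1}^{2}e^i\otimes\dot x\otimes e_j\,\bullet\,e^i\otimes\dot x\otimes e_j
\;=\;\sum_{i=1}^{2}(e^i\otimes\dot x)\,\diamond\,(e^i\otimes\dot x)\;=\;\tfrac12|\dot x|^2,
\]
which is (\ref{dc03}). The only non-routine ingredient is the bridge identity itself; since it is a word-for-word repetition of the Fourier-support/pinch argument for (\ref{qvarpsi}) with $f=G.\nabla d\phi$ replacing $d\psi$, no genuinely new difficulty arises.
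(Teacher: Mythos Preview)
Your proof is correct and takes a genuinely different route from the paper's. The paper proceeds by first using rotation invariance of $\bullet$ to reduce to $\dot x=e_1$, then expands each $e^i\otimes e_j\otimes e_1$ in the diagonalizing basis $\textit{\textbf{E}}^1,\ldots,\textit{\textbf{E}}^6$ of Appendix~\ref{ss:var-nabla-nabla-phi}, and reads off the answer from the explicit eigenvalues in (\ref{wr83}). Your argument instead establishes the bridge identity $\sum_l(G\otimes e_l)\bullet(G\otimes e_l)=G\diamond G$ by repeating the Fourier-shell pinching argument of (\ref{qvarpsi}) (exactly as the paper does one level down in the proof of Lemma~\ref{lem:quad-var-phi}), then uses the symmetry $\partial_i\partial_j=\partial_j\partial_i$ to swap the two tangent slots, and reduces directly to the already-known contraction (\ref{wr65bis}) of $\diamond$. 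Your route is more conceptual and bypasses the representation-theoretic diagonalization of $\bullet$ entirely; the paper's route, on the other hand, leverages machinery (the full identification of $\bullet$) that is needed anyway for Lemma~\ref{lem:detharm} and for the universality statement used in the proof of (\ref{ho19}).
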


\begin{proof}
We note that the l.~h.~s.~of (\ref{dc03}) defines a (non-negative) quadratic form on tangent space.
Since $\bullet$ is invariant under the action (\ref{wr78}) of rotations, 
and in particular under $J$, we have
\begin{align*}
J.e^i\otimes e_j\otimes\dot x\bullet J.e^i\otimes e_j\otimes\dot x
=J^{-t}e^i\otimes Je_j\otimes J\dot x\bullet J^{-t}e^i\otimes Je_j\otimes J\dot x.
\end{align*}
Since $\{(J^{-t}e^i,Je_j)\}_{i=1,2}$ is a permutation of $\{(e^i,e_j)\}_{i=1,2}$ modulo the sign,
we learn that this quadratic form is invariant under $J$.
Hence it must be isotropic, so that it is enough to show
\begin{align}\label{ho13}
\sum_{i,j=1}^2 e^i\otimes e_j\otimes e_1\bullet e^i\otimes e_j\otimes e_1
=\frac{1}{2}.
\end{align}

\medskip

To this purpose, 
we use (\ref{wr86}) and (\ref{wr86bis}) to express $e^i\otimes e_j\otimes e_1$
in terms of the $\textit{\textbf{E}}^{1},\hdots,\textit{\textbf{E}}^{6}$:
\begin{align*}
4\,e^1 \otimes e_1 \otimes e_1 &= 
- \textit{\textbf{E}}^1 + \textit{\textbf{E}}^3 + \textit{\textbf{E}}^5,\\
4\,e^1 \otimes e_2 \otimes e_1&=_{sym}
\textit{\textbf{E}}^2 - \textit{\textbf{E}}^4 + \textit{\textbf{E}}^6,\\
4\,e^2 \otimes e_1 \otimes e_1&=
\textit{\textbf{E}}^2 + 3\, \textit{\textbf{E}}^4- \textit{\textbf{E}}^6,\\
4\,e^2 \otimes e_2 \otimes e_1&=_{sym}
\textit{\textbf{E}}^1 - \textit{\textbf{E}}^3 + \textit{\textbf{E}}^5,
\end{align*}
where the subscript sym indicates that the identity holds only modulo
symmetrization of the last two factors in the tensor product
$(\mbox{cotangent space})\otimes(\mbox{tangent space})\otimes(\mbox{tangent space})$.
In view of (\ref{wr83}) we have 
\begin{align*}
2(-\textit{\textbf{E}}^1+\textit{\textbf{E}}^3+\textit{\textbf{E}}^5)\bullet
(-\textit{\textbf{E}}^1+\textit{\textbf{E}}^3+\textit{\textbf{E}}^5)&=1+1=2,\\
2( \textit{\textbf{E}}^2-\textit{\textbf{E}}^4+\textit{\textbf{E}}^6)\bullet
( \textit{\textbf{E}}^2-\textit{\textbf{E}}^4+\textit{\textbf{E}}^6)&=1+1=2,\\
2( \textit{\textbf{E}}^2 + 3\textit{\textbf{E}}^4- \textit{\textbf{E}}^6)\bullet
 (\textit{\textbf{E}}^2 + 3\textit{\textbf{E}}^4- \textit{\textbf{E}}^6)&=1+3^2=10,\\
2(\textit{\textbf{E}}^1 - \textit{\textbf{E}}^3 + \textit{\textbf{E}}^5)\bullet
(\textit{\textbf{E}}^1 - \textit{\textbf{E}}^3 + \textit{\textbf{E}}^5)&=1+1=2,
\end{align*}
so that the sum is $16$, which yields (\ref{ho13})
\end{proof}

Finally, we turn to the contributions of both $\diamond$ and $\bullet$ 
to $d\det\tilde F$.
Note that in $d=2$, the determinate $\det$ is a bilinear form on the space of
endomorphisms $F$ of cotangent space, as it can be written as 
\begin{align}\label{ho31}
\det F=F^1_1 F^2_2-F_1^2F_2^1=(e^1\otimes e_1.F)(e^2\otimes e_2.F)
-(e^1\otimes e_2.F)(e^2\otimes e_1.F).
\end{align}
The next lemma implies in particular that the contraction of $\det$ with $\diamond$, 
which is a canonical pairing since they are bilinear forms on dual spaces, 
and important since it arises as It\^{o} correction, vanishes.
In other words, $\det $ is a harmonic bilinear form w.~r.~t.~to the
Euclidean structure defined by $ \diamond $. Since via It\^{o}
calculus, the representation (\ref{ho31}) applied to $F=\tilde F$ will give rise to
(\ref{ho31}) with $F$ replaced by the composition $\tilde F \nabla d\phi$, 
we need (\ref{detharm}) with $G=\tilde F^t$.
 
\begin{lemma}[Quadratic variation of $\nabla d\phi$ and $\nabla^2d\phi$ for $d\det\tilde F$]\label{lem:detharm}
For any endomorphism $G$ of tangent space and any tangent vector $\dot x$ we have
\begin{align}
Ge^1\otimes e_1 \diamond Ge^2\otimes e_2
- Ge^1\otimes e_2 \diamond Ge^2\otimes e_1 = 0,\label{detharm}\\
e^1\otimes e_1\otimes\dot x \bullet e^2\otimes e_2\otimes\dot x
- e^1\otimes e_2\otimes\dot x \bullet e^2\otimes e_1\otimes\dot x =0.\label{detharm02}
\end{align}
\end{lemma}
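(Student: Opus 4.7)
Both identities assert the vanishing of a determinantal combination of the quadratic-variation forms. I would prove (\ref{detharm}) directly from the explicit expansion (\ref{ho46}) for $\diamond$, and (\ref{detharm02}) from the Fourier representation underlying $\bullet$. The common thread is a rank-one factorization of the Fourier symbol of $\nabla d\phi$ (resp.\ $\nabla^2 d\phi$) that makes $2\times 2$ determinant-type contractions degenerate.

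For (\ref{detharm}), I first polarize (\ref{ho46}) to obtain $G_1\diamond G_2 = \tfrac14 G_1:G_2 - \tfrac18((\tr G_1)(\tr G_2) - (\tr J^*G_1)(\tr J^*G_2))$, and then apply this to the four matrices $G_{ab}:=Ge^a\otimes e_b=e^a\otimes Ge_b$, each of which has $Ge_b$ in its $a$-th column and zero elsewhere. From this two features are immediate. First, the Frobenius pairings $G_{11}:G_{22}$ and $G_{12}:G_{21}$ vanish because their nonzero columns are disjoint, so the $G_1:G_2$ term drops out of (\ref{detharm}). Second, $\tr G_{ab}=G^a_b$ and $\tr(J^*G_{ab}) = -(JGe_b)^a$, from which a short computation gives that both $(\tr G_{11})(\tr G_{22}) - (\tr G_{12})(\tr G_{21})$ and $(\tr J^*G_{11})(\tr J^*G_{22}) - (\tr J^*G_{12})(\tr J^*G_{21})$ equal $\det G$. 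These two contributions enter (\ref{ho46}) with opposite signs and cancel, proving (\ref{detharm}).

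For (\ref{detharm02}) I would derive the Fourier representation of $\bullet$. From (\ref{wr01}), $\mathcal{F}(d\phi)^j(k) = i(\tilde\lambda|k|^2)^{-1}(Jk)^j\mathcal{F}(d\psi)(k)$, so $\mathcal{F}(\partial_i\partial_{i'}d\phi^j)(k) = -i\tilde\lambda^{-1}|k|^{-2}k_ik_{i'}(Jk)^j\mathcal{F}(d\psi)(k)$. Combined with the spectral form of $[d\psi\,d\psi]=d\tilde\lambda^2$, which is a uniform shell measure at $|k|=L^{-1}$, this yields
\begin{equation*}
\textit{\textbf{G}}\bullet\textit{\textbf{G}}' = \frac{1}{2\pi}\int_0^{2\pi}P(\textit{\textbf{G}},\hat k(\theta))\,P(\textit{\textbf{G}}',\hat k(\theta))\,d\theta,\qquad P(\textit{\textbf{G}},k):=\textit{\textbf{G}}^{ii'}_j k_ik_{i'}(Jk)^j.
\end{equation*}
For $\textit{\textbf{G}} = e^a\otimes e_b\otimes\dot x$ the symbol factorizes cleanly as $P=k_b(\dot x\cdot k)(Jk)^a$, with the $a$- and $b$-dependence decoupled. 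The integrand in the combination (\ref{detharm02}) then equals $(\dot x\cdot k)^2(Jk)^1(Jk)^2(k_1k_2 - k_2k_1)=0$ pointwise in $k$, and integration gives the identity.

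The main obstacle is justifying the Fourier formula for $\bullet$ rigorously — in particular, interpreting the distributional quadratic variation $[d\psi\,d\psi]=d\tilde\lambda^2$ as a shell spectral measure with the correct normalization. This is essentially the work of Appendix~\ref{ss:var-nabla-nabla-phi}, where the diagonalizing basis $\textit{\textbf{E}}^1,\ldots,\textit{\textbf{E}}^6$ is constructed. An alternative route, closer to the proof of Lemma~\ref{lem:quad-var-nabla-nabla-phi}, would be to expand $e^a\otimes e_b\otimes\dot x$ in that basis and verify the cancellation term by term; the Fourier viewpoint is however cleaner since it directly exhibits the rank-one factorization that drives both vanishings.
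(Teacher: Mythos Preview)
Your proof is correct. For (\ref{detharm}), your approach coincides with the paper's in substance: both rely on the polarized form (\ref{ho48}). The paper first observes that the left side is skew-symmetric in the pair $(Ge_1,Ge_2)$ and hence a multiple of $\det G$, reducing to $G={\rm id}$ before invoking (\ref{ho48}); you carry general $G$ through directly and find the same cancellation, namely that both trace combinations equal $\det G$ and enter (\ref{ho48}) with opposite signs.

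For (\ref{detharm02}) your route is genuinely different. The paper works entirely in the basis $\textit{\textbf{E}}^1,\ldots,\textit{\textbf{E}}^6$ of Appendix~\ref{ss:var-nabla-nabla-phi}: it linearizes in $\dot x$, eliminates the off-diagonal cases $i\neq j$ by a parity argument in that basis, reduces the diagonal ones to a single case via the $J$-action, and verifies the surviving identity numerically from (\ref{wr83}). Your spectral representation bypasses this machinery: once the Fourier symbol of $\nabla^2 d\phi$ factors as $k\otimes k\otimes (Jk)^*$ times a scalar, the determinantal combination of integrands vanishes pointwise by $k_1k_2-k_2k_1=0$, which is both shorter and exposes the structural reason---the rank-one factorization---behind the identity. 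The cost is exactly what you flag: one must set up the angular-average formula for $\bullet$ on the shell $|k|=L^{-1}$ with the correct normalization, whereas the paper absorbs that work into constructing the diagonalizing basis. Both routes rest on the same spectral input; yours makes the mechanism more transparent, the paper's stays within the algebraic framework already in place.
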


\begin{proof}[Proof of Lemma \ref{lem:detharm}.]$\mbox{}$ 
\medskip

\textit{Argument for (\ref{detharm}).}
Note that the l.~h.~s.~of (\ref{detharm}), which we rewrite in terms of $G_i:=Ge_i$ as
\begin{align*}
e^1\otimes G_1 \diamond e^2\otimes G_2- e^1\otimes G_2 \diamond e^2\otimes G_1,
\end{align*}
is a skew-symmetric bilinear form in the pair of tangent vectors $(G_1,G_2)$,
and thus a scalar multiple of $\det G$. Hence it is enough to check (\ref{detharm})
for $G={\rm id}$:
\begin{align}\label{ho47}
e^1\otimes e_1 \diamond e^2\otimes e_2=e^1\otimes e_2 \diamond e^2\otimes e_1.
\end{align}

\medskip

In order to establish (\ref{ho47}), we appeal to the polarized version of (\ref{ho46}),
that is
\begin{align}\label{ho48}
G\diamond G'=\frac{1}{4}G:G'-\frac{1}{8}\big((\tr G)(\tr G')-(\tr J^*G)(\tr J^*G')\big),
\end{align}
which we use for $(G,G')=(e^1\otimes e_1,e^2\otimes e_2)$ and
$(G,G')=(e^1\otimes e_2,e^2\otimes e_1)$. For both couples, the first r.~h.~s.~term
of (\ref{ho48}) vanishes. For the first couple, $(\tr G)(\tr G')=1\times 1$ while
$(\tr J^*G)(\tr J^* G')=0\times 0$. Since for the second couple
$(J^*G,J^*G')=(e^1\otimes J^*e_2,e^2\otimes J^*e_1)
=(e^1\otimes e_1,-e^2\otimes e_2)$, we have
$(\tr G)(\tr G')=0\times 0$ and $(\tr J^*G)(\tr J^* G')=1\times(-1)$. Hence both
sides of (\ref{ho47}) are identical.

\medskip

\textit{Argument for (\ref{detharm02}).} 
By linearity, we need to show
\begin{align}
e^1\otimes e_1\otimes e_i \bullet e^2\otimes e_2\otimes e_j
= e^1\otimes e_2\otimes e_i \bullet e^2\otimes e_1\otimes e_j.\label{ho05}
\end{align}
For $i\not= j$, both sides of (\ref{ho05}) vanish: 
Indeed, consider the $\bullet$-product on either side, and note that
in one factor, the index $2$ appears an even number of times, 
while in the other, it appears odd times. Hence by (\ref{wr86}) and modulo symmetrizing
in the last two factors
one factor is in the span of $\{\textit{\textbf{E}}^1,\textit{\textbf{E}}^3,\textit{\textbf{E}}^5\}$, 
while by (\ref{wr86bis}) the other
is in the one of
$\{\textit{\textbf{E}}^2,\textit{\textbf{E}}^4,\textit{\textbf{E}}^6\}$. Hence by (\ref{wr83}),
the $\bullet$-product vanishes.

\medskip

For $i=j$ in (\ref{ho05}), we first note that by the invariance of $\bullet$ 
under the action (\ref{wr78}) of rotations, and by the symmetry of $\bullet$, (\ref{ho05}) reduces to 
\begin{align*}
e^1\otimes e_1\otimes e_1\bullet e^2\otimes e_2\otimes e_1
= e^2\otimes e_1\otimes e_2 \bullet e^1\otimes e_2\otimes e_2.
\end{align*}
By (\ref{wr86}) and dividing by $16$, this assumes the form
\begin{align*}
(-\textit{\textbf{E}}^1+\textit{\textbf{E}}^3+\textit{\textbf{E}}^5)\bullet (\textit{\textbf{E}}^1-\textit{\textbf{E}}^3+\textit{\textbf{E}}^5)
=(\textit{\textbf{E}}^1-\textit{\textbf{E}}^3+\textit{\textbf{E}}^5)\bullet (\textit{\textbf{E}}^1+3\textit{\textbf{E}}^3-\textit{\textbf{E}}^5).
\end{align*}
It follows from (\ref{wr83}) that both sides take the value $-1$.
\end{proof}


\subsection{Identification of It\^{o} SDEs}\label{section:evolution} We now give the 
proofs of Lemma \ref{lem:evolution-proxy-phi} and Lemma \ref{lem:evolution-f},
which state ODEs for (first and second) moments of 
$|\tilde\phi|^2$, $\det\tilde F$, and $|\tilde F|^2$.
In fact, we shall derive It\^{o} SDEs for these quantities themselves and their squares
(in case of the latter without keeping track of the martingale term),
so that
the lemmas follow after taking the expectation.

\begin{proof}[Proof of Lemma \ref{lem:evolution-proxy-phi}.]$\mbox{}$ 
\medskip

\textit{Identification of $d|\tilde\phi|^2$.} We claim that
\begin{align}\label{wr66}
d|\tilde\phi|^2 = 2 \tilde G.\nabla d\phi+2\tilde\phi\cdot d\phi
+(\frac{1}{2}|\tilde\phi|^2+L^2)\frac{d\tilde\lambda^2}{\tilde\lambda^2},
\end{align}
where $ \tilde G $ is the endomorphism of tangent space 
(of Gram type and thus symmetric)
\begin{align}\label{wr69}
\tilde G:=\tilde\phi^* \otimes\tilde\phi,
\quad\mbox{in coordinates}\;\;\tilde G^j_i = \tilde\phi^i\tilde\phi^j.
\end{align}
Note that since the two first r.~h.~s.~terms of (\ref{wr66}) are of vanishing expectation,
(\ref{wr66}) implies (\ref{wr67}).

\medskip

Applying Itô's formula to
(\ref{wr04}) and appealing to Remark \ref{rmk:vanishing-covariation} we obtain
\begin{eqnarray*}
d | \tilde\phi |^2
&=& 2 \tilde\phi \cdot d\tilde\phi + [ d \tilde\phi \cdot d \tilde\phi ] \\
&\stackrel{(\ref{wr04})}{=}&
2 \tilde\phi \cdot d\phi + 2 \tilde\phi^i \tilde\phi^j \partial_i d \phi^j+ 
[ d\phi \cdot d\phi ]+ \tilde\phi^i \tilde\phi^j 
[ \partial_i d\phi \cdot \partial_j d\phi ].
\end{eqnarray*}
Since $\partial_id\phi^k=(e^k\otimes e_i).\nabla d\phi$, we obtain by definitions
(\ref{wr51}) and (\ref{wr69})
\begin{eqnarray*}
d | \tilde\phi |^2&=&
2 \tilde\phi \cdot d\phi+2\tilde G . \nabla d \phi +
[ d\phi \cdot d\phi ]+ \sum_{k=1}^2 e^k\otimes\tilde\phi \diamond e^k\otimes\tilde\phi 
\frac{ d \tilde\lambda^2 }{ \tilde\lambda^2 }.
\end{eqnarray*}
It remains to insert (\ref{wr75bis}) and (\ref{wr65bis}) to obtain (\ref{wr66}).

\medskip

\textit{Identification of $ d \E | \tilde\phi |^4 $.}
Applying It\^{o}'s formula to (\ref{wr66}), 
appealing to Remark \ref{rmk:vanishing-covariation},
and inserting definition (\ref{wr51}) yields
\begin{align*}
d|\tilde\phi|^4=
(2 \tilde G) \diamond (2 \tilde G)\frac{ d\tilde\lambda^2}{\tilde\lambda^2}
+[2\tilde\phi\cdot d\phi \, 2\tilde\phi\cdot d\phi]
+2|\tilde\phi|^2
(\frac{1}{2}|\tilde\phi|^2 + L^2) 
\frac{d \tilde\lambda^2}{\tilde\lambda^2}+\mbox{martingale}.
\end{align*}
Inserting (\ref{wr75}) and (\ref{wr72}) 
(recall the definition $G$ in (\ref{wr69})) yields
\begin{align*}
d|\tilde\phi|^4
&=\frac{1}{2}|\tilde\phi|^4\frac{ d\tilde\lambda^2}{\tilde\lambda^2}
+2|\tilde\phi|^2L^2\frac{ d\tilde\lambda^2}{\tilde\lambda^2}
+2|\tilde\phi|^2(\frac{1}{2}|\tilde\phi|^2 + L^2)
\frac{d \tilde\lambda^2}{\tilde\lambda^2}+\mbox{martingale}\nonumber\\
&=( \frac{3}{2} |\tilde\phi|^4 + 4 L^2 | \tilde\phi |^2 ) \frac{d \tilde\lambda^2}{\tilde\lambda^2}
+ \mbox{martingale},
\end{align*}
which indeed implies the claim.
\end{proof}


\begin{proof}[Proof of Lemma \ref{lem:evolution-f}.]$\mbox{}$ 
\medskip

\textit{Identification of $d \det \tilde F $.} We claim that
\begin{align}\label{wr57}
d\det\tilde F 
= \tilde\phi^i(\adj \tilde F)^t. \nabla\partial_i d \phi.
\end{align}
In particular, it follows from (\ref{wr57}) that $\det F$ is a martingale,
which implies (\ref{wr44}).
The adjugate $\adj F=(\det F) F^{-1}$,
an endomorphism of tangent space, naturally arises in (\ref{wr57}) 
as the differential of the determinate:
If $F$ depends on some parameter $s$, we have
\begin{align}\label{ho03}
\frac{d}{ds}\det F=(\det F)\tr F^{-1}\frac{d}{ds} F=(\adj F)^t.\frac{d}{ds} F.
\end{align}
Using (\ref{ho03}), and appealing to the definitions (\ref{wr51}) and (\ref{wr80}),
we obtain from (\ref{wr36}) by It\^{o}'s formula\footnote{Recall
that we omit brackets based on the convention that $\otimes$ binds more tightly
than any other product.}
\begin{align*}
d\det\tilde F
&=(\adj \tilde F)^t . \tilde F\nabla d \phi
+ \tilde\phi^i (\adj \tilde F)^t .\nabla\partial_i d \phi \phantom{ \frac{d\tilde\lambda^2}{L^2\tilde\lambda^2} } \nonumber\\
&+\big(\tilde F^t e^1\otimes e_1 \diamond\tilde F^t e^2\otimes e_2
-\tilde F^t e^1\otimes e_2 \diamond \tilde F^t e^2\otimes e_1 \big)
\frac{d\tilde\lambda^2}{\tilde\lambda^2}\nonumber\\
&+\big( e^1\otimes e_1\otimes\tilde\phi \bullet e^2\otimes e_2\otimes\tilde\phi
- e^1\otimes e_2\otimes\tilde\phi \bullet e^2\otimes e_1\otimes\tilde\phi \big)
\frac{d\tilde\lambda^2}{L^2\tilde\lambda^2},
\end{align*}
where we already used Remark \ref{rmk:vanishing-covariation}.
By (\ref{detharm}) and (\ref{detharm02}) also the remaining quadratic variation terms
in the two last lines vanish. Since by definition of the adjunct,
$(\adj\tilde F)^t.\tilde F\nabla d\phi$ $=(\det\tilde F){\rm tr}\nabla d\phi$,
which vanishes by (\ref{ho36}), this yields (\ref{wr57}).

\medskip

\textit{Identification of $ d ( \det \tilde F )^2 $.} We claim that
\begin{equation}\label{wr57c}
d ( \det \tilde F )^2 =
(\adj\tilde F)^t \otimes\tilde\phi \bullet(\adj\tilde F)^t \otimes\tilde\phi 
\frac{d \tilde\lambda^2}{L^2 \tilde\lambda^2}
+ \text{martingale},
\end{equation}
which also implies (\ref{wr45}). Indeed, we apply It\^{o}'s formula to (\ref{wr57}), 
in which we write
\begin{align*}
\tilde\phi^i(\adj\tilde F)^t.\nabla\partial_id\phi
=(\adj\tilde F)^t \otimes\tilde\phi.\nabla^2d\phi,
\end{align*}
and insert the definition (\ref{wr80}). This yields (\ref{wr57c}).

\medskip

\textit{Identification of $ d | \tilde F |^2 $.} We claim 
\begin{align}\label{wr37}
d| \tilde F|^2= 2 (\tilde F^*\tilde F)^t .\nabla d\phi
+ 2\tilde F^{*t} \otimes\tilde\phi.\nabla^2d \phi
+\big(\frac{1}{2}| \tilde F|^2+\frac{|\tilde\phi|^2}{  2 L^2 }\big)
\frac{d\tilde\lambda^2}{\tilde\lambda^2},
\end{align}
which implies (\ref{wr43}).

\medskip

We appeal to the representation (\ref{ho38})
and apply It\^{o}'s formula in conjunction with Definition
\ref{defn:quad-var} and Remark \ref{rmk:vanishing-covariation} to (\ref{wr36}),
which yields
\begin{align*}
d|\tilde F|^2
&=2\tr \tilde F^*\tilde F\nabla d\phi+2\tilde\phi^i\tr \tilde F^*\nabla\partial_id\phi\nonumber\\
&+ \sum_{i,j=1}^2
\big(\tilde F^t e^i\otimes e_j \diamond \tilde F^t e^i\otimes e_j
\frac{d\tilde\lambda^2}{\tilde\lambda^2}
+ e^i\otimes e_j \otimes\tilde\phi \bullet e^i\otimes e_j \otimes\tilde\phi
\frac{d\tilde\lambda^2}{L^2\tilde\lambda^2}\big).
\end{align*}
Appealing to the formulas (\ref{wr38d}), in conjunction with $|\tilde F^t|^2=|\tilde F|^2$, 
and (\ref{dc03}) we obtain (\ref{wr37}).

\medskip

\textit{Identification of $ d | \tilde F |^4 $}. We claim
\begin{align}\label{wr42a}
d | \tilde F |^4
&= \frac{3}{2} | \tilde F |^4 \frac{d \tilde\lambda^2}{\tilde\lambda^2}
- 2 \det { \tilde F } \frac{d \tilde\lambda^2}{\tilde\lambda^2}
+ | \tilde F |^2 | \tilde\phi |^2 \frac{d \tilde\lambda^2}{L^2 \tilde\lambda^2} 
+ 4 \, \tilde F^{*t} \otimes \tilde\phi \bullet \tilde F^{*t} \otimes \tilde\phi
\frac{d\tilde\lambda^2}{L^2 \tilde\lambda^2} \\
&+ \text{martingale}, \nonumber
\end{align}
which implies (\ref{wr42}). Indeed, from (\ref{wr37}) and It\^{o}'s formula we obtain
$$
\begin{aligned}
d | \tilde F |^4
& = | \tilde F |^4 \frac{ d \tilde\lambda^2 }{ \tilde\lambda^2 }
 + |\tilde F|^2 |\tilde\phi|^2 \frac{ d\tilde\lambda^2 }{ L^2 \tilde\lambda^2 } \\
& + 4 (\tilde F^*\tilde F)^t \diamond (\tilde F^*\tilde F)^t
\frac{d\tilde\lambda^2}{\tilde\lambda^2}
+ 4 \, \tilde F^{*t} \otimes \tilde\phi \bullet \tilde F^{*t} \otimes \tilde\phi
\frac{d\tilde\lambda^2}{L^2 \tilde\lambda^2}
+ \text{martingale}.
\end{aligned}
$$
We now appeal to formula (\ref{wr45bis}) to the symmetric $G=(\tilde F^*\tilde F)^t$
to the effect of
\begin{equation}\label{wr42b}
(\tilde F^*\tilde F)^t \diamond (\tilde F^*\tilde F)^t
= \frac{1}{8} ( \tr (\tilde F^*\tilde F)^t )^2 
- \frac{1}{2} \det (\tilde F^*\tilde F)^t
= \frac{1}{8} | \tilde F |^4 - \frac{1}{2} ( \det \tilde F )^2,
\end{equation}
so that (\ref{wr42a}) follows.
\end{proof}


\subsection{Derivation of asymptotics}\label{section:proofs}

We now integrate the odes that we derived in the last section to prove Lemma \ref{lem:growth-proxy-phi} and the asymptotics in Theorem \ref{thm:proxy-intermittency}. We start with the former.

\begin{proof}[Proof of Lemma \ref{lem:growth-proxy-phi}.]$\mbox{}$ 
\medskip

\textit{Growth of $ \E | \tilde\phi |^2 $.} Our goal is to prove (\ref{ode02a}). 
To this end, we let $ x \coloneqq \tilde\lambda^2 $, which by (\ref{tl01}) means 
$ x = 1 + \varepsilon^2 \ln L $, and $ a \coloneqq \E| \tilde\phi |^2 $. 
Equation (\ref{wr67}) may be rewritten as
$$
x^\frac{1}{2} \frac{d}{dx} \frac{a}{x^{\frac{1}{2}}}
= \frac{da}{dx} - \frac{1}{2}\frac{a}{x}
= \frac{ \exp( \frac{2}{\varepsilon^2}( x - 1 ) ) }{ x }
$$
and integrated explicitly, i.~e.
\begin{align}\label{ho50}
a
= x^\frac{1}{2} \int_1^x dy \frac{\exp(\frac{2}{\varepsilon^2}(y-1))}{y^{\frac{3}{2}}}
= \frac{\exp(\frac{2}{\varepsilon^2}(x-1)) }{x} 
\int_1^x dy(\frac{x}{y})^{\frac{3}{2}}\exp(\frac{2}{\varepsilon^2}(y-x)).
\end{align}
To read off the asymptotics, it is convenient to change variables to 
$ - \widehat{y} = \frac{1}{\varepsilon^2} ( y - x ) $ and thus
$ - \widehat{x} = \frac{1}{\varepsilon^2} ( 1 - x ) $, 
which in view of (\ref{tl01}) is equivalent to $ \widehat{x} = \ln L \gg 1 $, so that
\begin{equation}\label{ode02}
\int_1^x dy ( \frac{ x }{ y } ) ^{ \frac{3}{2} }  \exp( \frac{2}{\varepsilon^2}( y - x ) )
= \varepsilon^2 
\int_0^{\widehat{x}} d\widehat{y}  (\frac{\varepsilon^2\widehat{x}+1}
{\varepsilon^2(\widehat{x}-\widehat{y})+1})^{\frac{3}{2}}\exp( - 2 \widehat{y} ).
\end{equation}
Due to the presence of $\exp(-2\widehat{y})$, uniformly in $\varepsilon\le 1$,
the main contribution to the integral comes from $\widehat{y}\lesssim 1$. For these $\widehat{y}$
we have $\frac{\varepsilon^2\widehat{x}+1}
{\varepsilon^2(\widehat{x}-\widehat{y})+1}\approx 1$ in our regime $\widehat{x}\gg 1$.
Hence the integral is approximated as follows
\begin{equation}\label{ho21}
\int_0^{\widehat{x}} d\widehat{y}  (\frac{\varepsilon^2\widehat{x}+1}
{\varepsilon^2(\widehat{x}-\widehat{y})+1})^{\frac{3}{2}}\exp( - 2 \widehat{y} )
\approx\int_0^\infty d\widehat{y}\exp( - 2 \widehat{y} )=\frac{1}{2},
\end{equation}
so that we learn
$$
a \approx \frac{ \varepsilon^2 }{ 2 } \frac{ \exp( \frac{2}{\varepsilon^2} (x - 1) ) }{ x }.
$$
By definition of $ x $, $ \widehat{x} $ and $ a $ this translates into (\ref{ode02a}).

\medskip

\textit{Growth of $ \E | \tilde\phi |^4 $.} We will now prove (\ref{ode03a}). 
We continue to use the abbreviations $x = \tilde\lambda^2$ 
and $a = \E | \tilde\phi |^2$ and introduce
$b \coloneqq \E| \tilde\phi |^4$ so that (\ref{wr70}) assumes
the compact form
$$
x^{ \frac{3}{2} } \frac{d}{dx} \frac{b}{x^{ \frac{3}{2}} }
= \frac{db}{dx} - \frac{3}{2} \frac{b}{x}
= 4 \frac{ \exp( \frac{2}{\varepsilon^2} ( x - 1 ) ) }{ x } a.
$$
Integrating and inserting (\ref{ode02}) yields
\begin{equation}\label{ode03}
\begin{aligned}
b
&= 4 x^{ \frac{3}{2} }
\int_1^{x} dy \frac{ \exp( \frac{2}{\varepsilon^2}( y - 1 ) ) }{ y^2 }
\int_1^y dz \frac{ \exp( \frac{2}{\varepsilon^2}( z - 1 ) ) }{ z^{ \frac{3}{2} } } \\
&= 4 \frac{ \exp( \frac{4}{\varepsilon^2}( x - 1 ) ) }{ x^2 }
\int_1^{x} dy ( \frac{ x }{ y } )^{ \frac{7}{2} } \exp( \frac{2}{\varepsilon^2}( y - x ) )
\int_1^y dz ( \frac{ y }{ z } )^{ \frac{3}{2} } \exp( \frac{2}{\varepsilon^2}( z - x ) ).
\end{aligned}
\end{equation}
We continue to transform variables according to
$ - \widehat{x} = \frac{1}{\varepsilon^2} ( 1 - x ) $, 
$ - \widehat{y} = \frac{1}{\varepsilon^2} ( y - x ) $ and now also
$ - \widehat{z} = \frac{1}{\varepsilon^2} ( z - x ) $ to the effect of
\begin{align}\label{ode03b}
&\int_1^x dy(\frac{x}{y})^{\frac{7}{2}}\exp(\frac{2}{\varepsilon^2}(y-x))
\int_1^y dz(\frac{y}{z})^{\frac{3}{2}}\exp(\frac{2}{\varepsilon^2}(z-x))\nonumber\\
&=\varepsilon^4\int_{0}^{\widehat{x}}d\widehat{y}(\frac{\varepsilon^2 \widehat{x}+1}
{\varepsilon^2(\widehat{x}-\widehat{y})+1})^{\frac{7}{2}}\exp(-2\widehat{y})
\int_{\widehat{y}}^{\widehat{x}} d\widehat{z}(\frac{\varepsilon^2(\widehat{x}-\widehat{y})+1}
{\varepsilon^2(\widehat{x}-\widehat{z})+1})^{\frac{3}{2}}\exp(-2\widehat{z}).
\end{align}
Again, in our regime $\hat x\gg 1$, the main contribution comes from
$\hat y,\hat z\lesssim 1$, so that the double integral is
\begin{align*}
\approx \int_{0}^{\infty}d\widehat{y}\exp(-2\widehat{y})
\int_{\widehat{y}}^{\infty} d\widehat{z}\exp(-2\widehat{z})
=\frac{1}{2}\int_0^\infty d\widehat{y}\exp(-4\widehat{y})=\frac{1}{8},
\end{align*} 
so that
$$
b \approx \frac{ \varepsilon^4 }{ 2 } \frac{ \exp( \frac{4}{\varepsilon^2}( x - 1 ) ) }{ x^2 }
\quad \text{as} ~ \widehat{x} \gg 1.
$$
By definition of $ x $, $ \widehat{x} $ and $ b $ this translates into (\ref{ode03a}).
\end{proof}

We note that the l.~h.~s.~integral in (\ref{ho21}) is monotone increasing in $\varepsilon$
and thus estimated as
\begin{align}\label{ho49}
& \int_0^{\widehat{x}}d\widehat{y}(\frac{\varepsilon^2\widehat{x}+1}
{\varepsilon^2(\widehat{x}-\widehat{y})+1})^\frac{3}{2}
\exp(-2\widehat{y}) \nonumber\\
&\le\int_0^{\widehat{x}}d\widehat{y}(\frac{\widehat{x}+1}{\widehat{x}-\widehat{y}+1})^\frac{3}{2}
\exp(-2\widehat{y})
\le\int_0^{\infty}d\widehat{y}(1+\widehat{y})^\frac{3}{2}
\exp(-2\widehat{y})<\infty.
\end{align}
Hence an inspection of the proof of (\ref{ode02a})
establishes the pre-asymptotic estimate (\ref{ode06a})
below. Since the inner integral in (\ref{ode03b}) is largest for $\widehat{y}=0$, 
it is also estimated by the constant on the r.~h.~s.~of (\ref{ho49});
the outer integral is then treated like (\ref{ho49}) with the exponent $\frac{3}{2}$
replaced by $\frac{7}{2}$. This yields the pre-asymptotic estimate (\ref{ode06}).

\begin{remark}\label{rmk:growth-proxy-phi-ub}[{see \cite[Lemma 4]{CMOW}}].
We have for $\varepsilon\le 1$
\begin{align}
\E | \tilde\phi |^2 & \lesssim \varepsilon^2 \frac{ L^2 }{ \tilde\lambda^2 }, \label{ode06a}\\
\E | \tilde \phi |^4 &\lesssim \varepsilon^4 \frac{ L^4 }{ \tilde\lambda^4 }. \label{ode06}
\end{align}
%
\end{remark}

We collected all ingredients to prove the asymptotics stated in Theorem \ref{thm:proxy-intermittency}.
 
\begin{proof}[Proof of asymptotics in Theorem \ref{thm:proxy-intermittency}.] $\mbox{}$
\medskip

\textit{Asymptotics of $ \E | \tilde F |^2 $.} 
As in the proof of Lemma \ref{lem:growth-proxy-phi}, we use the convention $ x = \tilde\lambda^2 $ and now consider
$ A \coloneqq \E | \tilde F |^2 $ to rewrite (\ref{wr43}) as
$$
x^\frac{1}{2} \frac{d}{dx} \frac{A}{x^\frac{1}{2}}
= \frac{dA}{dx} - \frac{1}{2} \frac{A}{x}
=  \frac{1}{2} \frac{a}{x} \exp( - \frac{2}{\varepsilon^2} ( x - 1 ) ).
$$
By virtue of (\ref{ho50}) and the initial condition $ F = \rm id $ and thus $A=2$ at $ x = 1 $ we have
$$
\begin{aligned}
\frac{A}{x^\frac{1}{2}} - 2
&= \frac{1}{2} \int_1^x dy \frac{a}{ y^{ \frac{3}{2} } } \exp( - \frac{2}{\varepsilon^2} ( y - 1 ) ) \\
&\stackrel{(\ref{ho50})}{=} \frac{1}{2} \int_1^x dy \frac{ 1 }{ y^{ \frac{5}{2} } }
\int_1^y dz ( \frac{ y }{ z } )^{\frac{3}{2}} \exp( \frac{2}{\varepsilon^2} ( z - y ) ).
\end{aligned}
$$
We continue to change the inner variable according to
$ - \widehat{z} = \frac{1}{\varepsilon^2} ( z - y ) $,
but change the outer one according to
$ - \widehat{y} = \frac{1}{\varepsilon^2} (1 - y) $,
rather like
$ - \widehat{x} = \frac{1}{\varepsilon^2} (1 - x) $.
This yields
\begin{equation}\label{ode10}
\frac{A}{x^\frac{1}{2}}-2
= \frac{\varepsilon^2}{2} \int_{ 0 }^{ \widehat{x} } d \widehat{y} \frac{ \varepsilon^2 }{ ( \varepsilon^2 \widehat{y} + 1 )^{\frac{5}{2}} }
\int_{0}^{\widehat{y}} d\widehat{z} (\frac{\varepsilon^2\widehat{y}+1}
{\varepsilon^2(\widehat{y}-\widehat{z})+1})^{\frac{3}{2}}\exp(-2\widehat{z}).
\end{equation}
By (\ref{ho49}), the inner integral is $\lesssim 1$.
Hence the double integral is
\begin{align*}
\lesssim\int_{0}^{\infty}d\widehat{y}
\frac{\varepsilon^2}{(\varepsilon^2\widehat{y}+1)^\frac{5}{2}}
=\int_1^\infty dy\frac{1}{y^\frac{5}{2}} <\infty.
\end{align*}
Therefore in the regime $\varepsilon\ll 1$, (\ref{ode10}) yields
$$
A \approx 2 {x^\frac{1}{2}},
$$
which translates back to (\ref{mr05}).

\medskip

\textit{Rough upper bound on $ \E | \tilde F |^4 $.} 
By the universality of $\bullet$ established in Appendix \ref{ss:var-nabla-nabla-phi}, 
the last term in (\ref{wr42})
is estimated by the penultimate one:
\begin{align*}
\E(\tilde F^{*t}\otimes\tilde\phi\bullet\tilde F^{*t}\otimes\tilde\phi)
\lesssim\mathbb{E}|\tilde\phi|^2|\tilde F|^2;
\end{align*}
by Cauchy-Schwarz and (\ref{ode06}), the latter is estimated as follows
\begin{align*}
\frac{1}{L^2}\mathbb{E}|\tilde\phi|^2|\tilde F|^2\lesssim\frac{\varepsilon^2}{\tilde\lambda^2}
\mathbb{E}^\frac{1}{2}|\tilde F|^4.
\end{align*}
Hence with the abbreviations $B\coloneqq\E|\tilde F|^4$ and $C\coloneqq\E(\det\tilde F)^2$,
(\ref{wr42}) yields
\begin{align}\label{ho52}
|\frac{dB}{dx}-(\frac{3}{2}B-2C)\frac{1}{x}|\lesssim\varepsilon^2\frac{B^\frac{1}{2}}{x^2}.
\end{align}
Since $C\ge 0$, (\ref{ho52}) in particular entails the differential inequality
$$
x^{\frac{3}{2}} \frac{d}{dx} \frac{B}{x^{ \frac{3}{2} }}
=\frac{dB}{dx}-\frac{3}{2}\frac{B}{x}
\lesssim \varepsilon^2 \frac{ B^{\frac{1}{2}} }{ x^2 }
=\varepsilon^2\frac{1}{x^\frac{5}{4}}(\frac{B}{x^\frac{3}{2}})^\frac{1}{2},
$$
which we rewrite as
$$
\frac{d}{dx}(\frac{B}{x^{\frac{3}{2}}})^\frac{1}{2}
\lesssim\varepsilon^2\frac{1}{x^\frac{11}{4}}.
$$
Integration with the initial condition $B=4$ at $x=1$ gives
\begin{align}\label{ho54}
(\frac{B}{x^{\frac{3}{2}}})^\frac{1}{2}-2\lesssim\varepsilon^2
\quad\mbox{and thus}\quad
B\lesssim x^\frac{3}{2}.
\end{align}

\medskip

\textit{Asymptotics of $ \E (\det\tilde F-1)^2 $.} 
We recall the definitions $B=\E|\tilde F|^4$ and $C=\E(\det\tilde F)^2$;
by the same argument as above, that is, universality of $\bullet$, Cauchy-Schwarz, 
and (\ref{ode06}), the ODE (\ref{wr45}) yields
\begin{align*}
|\frac{dC}{dx}|\lesssim\varepsilon^2\frac{B^\frac{1}{2}}{x^2}.
\end{align*}
By the upper bound (\ref{ho54}), this implies
\begin{align}\label{ho56}
|\frac{dC}{dx}|\lesssim\varepsilon^2\frac{1}{x^\frac{5}{4}}.
\end{align}
Integration with the initial condition $C=1$ at $x=1$ gives
\begin{align}\label{ho55}
|C-1|\lesssim\varepsilon^2,
\end{align}
which turns into (\ref{mr04}) in view of $\E\det\tilde F=1$ as a consequence of
(\ref{wr44}).

\medskip

\textit{Asymptotics of $ \E | \tilde F |^4 $.} 
We return to (\ref{ho52}) and combine it with (\ref{ho56}) to obtain
a differential inequality rewrite on $\frac{3}{2}B-2C$, 
and use (\ref{ho54}) to estimate the r.~h.~s.:
\begin{align*}
|\frac{d}{dx}(\frac{3}{2}B-2C)-\frac{3}{2}(\frac{3}{2}B-2C)\frac{1}{x}|
\lesssim\varepsilon^2\frac{1}{x^\frac{5}{4}}.
\end{align*}
We rewrite this once more as
\begin{align*}
|\frac{d}{dx}\frac{1}{x^\frac{3}{2}}(\frac{3}{2}B-2C)|
\lesssim\varepsilon^2\frac{1}{x^\frac{11}{4}}.
\end{align*}
Integration with the initial condition $B=4$ and $C=1$ at $x=1$ gives
\begin{align*}
|\frac{1}{x^\frac{3}{2}}(\frac{3}{2}B-2C)-4|\lesssim\varepsilon^2,
\end{align*}
which together with (\ref{ho55}) yields
\begin{align*}
|\frac{1}{x^\frac{3}{2}}(\frac{3}{2}B-2)-4|\lesssim\varepsilon^2.
\end{align*}
It is the weaker version
\begin{align*}
B\approx \frac{8}{3}x^\frac{3}{2}+\frac{4}{3}
\end{align*}
that corresponds to (\ref{mr06}).
\end{proof}


\subsection{Proof of main result}\label{section:proof-mr} 

We now complete the proof of Theorem \ref{thm:proxy-intermittency} by establishing \eqref{ao13}. The asymptotics were already established in Section \ref{section:proofs}.

\begin{proof}[Proof of Proposition \ref{prop:proxy-intermittency-2}
--- Non equi-integrability of $ ( \E | \tilde F |^2 )^{ - \frac{1}{2} } \tilde F $.]
The same It\^{o} argument that lead from (\ref{wr37}) to (\ref{wr42}) 
also implies that
that for any test function/observable $\zeta$ of $r=|\tilde F|^2$
\begin{align}\label{ho19}
d\mathbb{E}\zeta(|\tilde F|^2)
&=\mathbb{E}\Big( \frac{d\zeta}{dr}
(|\tilde F|^2)\big(\frac{1}{2}|\tilde F|^2+\frac{|\tilde\phi|^2}{
2L^2})
\nonumber\\
&+\frac{1}{2} \frac{d^2\zeta}{dr^2} (|\tilde F|^2) 
\big( \frac{1}{2}|\tilde F|^4-2(\det\tilde F)^2 
+\frac{ ( 2\tilde F^{*t } \otimes\tilde\phi ) \bullet ( 2\tilde F^{*t } \otimes\tilde\phi ) }{L^2}\big)\Big)
\frac{d\tilde\lambda^2}{\tilde\lambda^2};
\end{align}
in fact, (\ref{wr42}) amounts to (\ref{ho19}) for $\zeta(r)=r^2$.
Since according to Appendix \ref{ss:var-nabla-nabla-phi}, 
the form $\bullet$ is universal, we have
\begin{align*}
| \tilde F^{*t } \otimes\tilde\phi \bullet \tilde F^{*t } \otimes\tilde\phi |
\lesssim|\tilde\phi|^2|\tilde F|^2\quad\mbox{next to}\quad
|{\rm det}\tilde F|\lesssim|\tilde F|^2.
\end{align*}
Hence we obtain the differential inequality (where $r=|\tilde F|^2$ is the argument of
$\zeta$ and its derivatives)
\begin{align}\label{ao01}
\Big|\frac{d}{d\tau}\mathbb{E}\zeta
-\mathbb{E}\big(\frac{1}{2}|\tilde F|^2\frac{d\zeta}{dr}
+\frac{1}{4}|\tilde F|^4\frac{d^2\zeta}{dr^2}\big)\Big|
\lesssim\mathbb{E}\Big( |\det \tilde F| |\tilde F|^2|\frac{d^2\zeta}{dr^2}|
+\frac{|\tilde\phi|^2}{L^2}\big(|\frac{d\zeta}{dr}|+|\tilde F|^2|\frac{d^2\zeta}{dr^2}|\big)\Big)
\end{align}
in terms of the time and space-like variable
\begin{align}\label{ao04}
\tau=\ln\tilde\lambda^2 \quad\mbox{and}\quad r=|\tilde F|^2 ,
\end{align}
respectively.

\medskip

This motivates to consider a $\tau$-dependent $\zeta=\zeta(\tau,r)$
that satisfies the backward equation
\begin{align}\label{ao02}
\frac{\partial\zeta}{\partial\tau}+\frac{r}{2}\frac{\partial\zeta}{\partial r}
+\frac{r^2}{4}\frac{\partial^2\zeta}{\partial r^2}=0
\end{align}
so that by the (pre-asymptotic w.~r.~t.~$L$) bounds
(\ref{mr04}) \& (\ref{mr05}) and (\ref{ode06a}), and using the notation
(\ref{ao04}), (\ref{ao01}) implies
\begin{align*}
|\frac{d}{d\tau}\mathbb{E}\zeta( \tau, |\tilde F|^2)|
\lesssim\sup_{r}r|\frac{\partial^2\zeta}{\partial r^2}|
+\varepsilon^2e^{-\tau}\sup_{r}|\frac{\partial\zeta}{\partial r}|.
\end{align*}
We will use this after integration in $\tau'\in(0,\tau)$ and appealing to $|\tilde F_{ \tau = 0 }|^2=2$, see (\ref{wr04}):
\begin{align}\label{ao03}
\mathbb{E}\zeta(\tau,|\tilde F|^2)\lesssim\zeta(0,2)
+\int_0^\tau d\tau'\sup_{r}r|\frac{\partial^2\zeta}{\partial r^2}(\tau',r)|
+\varepsilon^2\int_0^\tau d\tau'e^{-\tau'}
\sup_{r}|\frac{\partial\zeta}{\partial r}(\tau',r)|.
\end{align}

\medskip

We now construct a suitable solution $\zeta$ of the backwards parabolic equation (\ref{ao02}),
which amounts to a duality argument in the jargon of PDE theory. To this purpose, 
we pass to the natural\footnote{Note that in view of (\ref{ao04}), this amounts to the natural scale $r=\tilde\lambda\hat r$
for $r=|\tilde F|^2$; $\zeta=\hat r\hat\zeta$ means that $\hat\zeta$ modulates
the properly rescaled second moment of $\tilde F$.} variables
\begin{align}\label{ao06}
r=e^\frac{\tau}{2}\hat r \stackrel{(\ref{ao04})}{=}\tilde\lambda\hat r
\quad\mbox{and}\quad\zeta=\hat r\hat\zeta,
\end{align}
in the sense of $\zeta(\tau,e^\frac{\tau}{2}\hat r)$ $=\hat r\hat\zeta(\tau,\hat r)$,
which implies $\frac{\partial\zeta}{\partial\tau}+\frac{r}{2}\frac{\partial\zeta}{\partial r}$ 
$=\hat r\frac{\partial\hat\zeta}{\partial\tau}$.
Hence by $r^2\frac{\partial}{\partial r^2}$ $=\hat r^2\frac{\partial^2}{\partial \hat r^2}$
and 
$\frac{\partial^2\zeta}{\partial\hat r^2}$ $=\hat r\frac{\partial^2\hat\zeta}{\partial\hat r^2}
+2\frac{\partial\hat\zeta}{\partial\hat r}$, the form of (\ref{ao02}) is preserved:
\begin{align}\label{ao05}
\frac{\partial\hat\zeta}{\partial\tau}+\frac{\hat r}{2}\frac{\partial\hat\zeta}{\partial\hat r}
+\frac{\hat r^2}{4}\frac{\partial^2\hat\zeta}{\partial\hat r^2}=0.
\end{align}

\medskip

The scale-invariant (in $\hat r$) nature of (\ref{ao05}) suggests to pass to 
logarithmic independent variable
\begin{align}\label{ao10}
\hat\sigma=\ln\hat r.
\end{align}
Indeed, because of $\hat r\frac{\partial}{\partial\hat r}=\frac{\partial}{\partial\hat\sigma}$ and
$\hat r^2\frac{\partial}{\partial \hat r^2}$
$=\frac{\partial^2}{\partial\hat\sigma^2}-\frac{\partial}{\partial\hat\sigma}$,
(\ref{ao05}) turns into the constant-coefficient equation
\begin{align}\label{ao07}
\frac{\partial\hat\zeta}{\partial\tau}+\frac{1}{4}\frac{\partial\hat\zeta}{\partial\hat\sigma}
+\frac{1}{4}\frac{\partial^2\hat\zeta}{\partial\hat\sigma^2}=0.
\end{align}
For momentarily fixed $\tau\gg 1$ and $\hat\sigma\gg 1$ (as above we use $ \tau' \in ( 0, \tau ) $ and $ \hat\sigma' $ as active variables instead) we consider the $C^2$ 
terminal\footnote{as opposed to initial, in view of the backwards nature of (\ref{ao07})} data 
given by
\begin{align}
& \hat\zeta(\tau,\hat\sigma')\left\{\begin{array}{ll}
=1&\mbox{for}\;\hat\sigma'\le\hat\sigma\\
\in[0,1]&\mbox{for all}\;\hat\sigma'\\
=0&\mbox{for}\;\hat\sigma'\ge\hat\sigma+1\end{array}\right\} \quad\mbox{and}\quad
|\frac{\partial^2\hat\zeta}{d\hat\sigma'^2}(\tau,\hat\sigma')| \lesssim 1.
\label{ao08}
\end{align}
These terminal data are chosen such that the l.~h.~s.~of (\ref{ao03}) controls
the second moments of $\tilde F$ up to tails; indeed, by the definitions
(\ref{ao02}), (\ref{ao06}), and (\ref{ao10}), we have
\begin{align}\label{ao14}
\mathbb{E}\zeta(\tau,|\tilde F|^2)
\ge\frac{1}{\tilde\lambda}\mathbb{E}I(|\tilde F|^2 \le\tilde\lambda\hat r)|\tilde F|^2.
\end{align}

\medskip

We now choose the parameter $\hat\sigma=\ln\hat r$ 
so large that the first r.~h.~s.~term of (\ref{ao03}) satisfies
\begin{align}\label{ao09}
\zeta( \tau' = 0, r' = 2 )
\stackrel{(\ref{ao06}),(\ref{ao10})}{=}
2 \hat\zeta( \tau' = 0, \hat\sigma' = \ln 2)\ll 1.
\end{align}
To this purpose we note that the (translation invariant) semi-group kernel for 
(\ref{ao07}) with terminal data at $\tau'=\tau$ is a 
Gaussian of mean $-\frac{1}{4}(\tau-\tau')$ and variance $\frac{1}{2}(\tau-\tau')$.\footnote{This is conveniently seen by making a Gaussian Ansatz with general mean and variance that lead to the specific form claimed above.} 
Hence in view of (\ref{ao08}) in form of
$\hat\zeta(\tau,\hat\sigma')$ $\le I(\hat\sigma'<\hat\sigma+1)$, 
we have in terms of the error function
\begin{align*}
\hat\zeta(\tau',\hat\sigma')\le F_{ \mathcal{N}(0,1) } 
\Big(\frac{-\hat\sigma'+\hat\sigma+1-\frac{1}{4}(\tau-\tau')}
{\sqrt{ \frac{1}{2}(\tau-\tau')}}\Big)
\end{align*}
and thus $\hat\zeta( \tau' = 0, \hat\sigma' = \ln 2)$ 
$\le  F_{ \mathcal{N}(0,1) } ( \frac{1- \ln 2 +\hat\sigma - \frac{\tau}{4}}
{ \sqrt{\frac{\tau}{2}} } ) $. Hence (\ref{ao09}) is satisfied provided
$\frac{\tau}{4}-\hat\sigma$ $\gg\sqrt{\tau}$, which in view (\ref{ao04}) and (\ref{ao10})
amounts to (\ref{ao12}).

\medskip

We now turn to the two last r.~h.~s.~terms of (\ref{ao03}) and shall establish
\begin{align}
\int_0^\tau d\tau'
\sup_{r}r|\frac{\partial^2\zeta}{\partial r^2}(\tau',r)|
&\lesssim\frac{1}{\sqrt{\tau}},\label{ao15}\\
\int_0^\tau d\tau'e^{-\tau'}
\sup_{r}|\frac{\partial\zeta}{\partial r}(\tau',r)|
&\lesssim 1.\label{ao16}
\end{align}
For this we note that by (\ref{ao06}) and (\ref{ao10})
\begin{align*}
\frac{\partial\zeta}{\partial r}
&=e^{-\frac{\tau'}{2}}\frac{\partial}{\partial\hat r}\hat r\hat\zeta
=e^{-\frac{\tau'}{2}}(\frac{\partial}{\partial\hat\sigma'}+1)\hat\zeta,\\
r\frac{\partial^2\zeta}{\partial r^2}
&=e^{-\frac{\tau'}{2}}\hat r\frac{\partial^2}{\partial\hat r^2}\hat r\hat\zeta
=e^{-\frac{\tau'}{2}}(\frac{\partial}{\partial\hat\sigma'}+1)
\frac{\partial\hat\zeta'}{\partial\hat\sigma'}.
\end{align*}
In order to control suprema in $\sigma'$ of $(\frac{\partial}{\partial\hat\sigma}+1)\hat\zeta$ 
and $(\frac{\partial}{\partial\hat\sigma}+1)
\frac{\partial\hat\zeta}{\partial\tilde\sigma}$ as a function of $\tau'$, 
the drift term in (\ref{ao07}) is irrelevant, so that we may appeal to standard
estimates on the (backward) heat equation. From these we learn that the estimates
on the terminal data, cf.~(\ref{ao08}), both uniform 
$\sup_{\hat\sigma'}(|\hat\zeta|+|\frac{\partial\hat\zeta}{\partial\hat\sigma'}|
+|\frac{\partial^2\hat\zeta}{\partial\hat\sigma'^2}|)$ $\lesssim 1$ 
and integral $\int d\hat\sigma'(|\frac{\partial\hat\zeta}{\partial\hat\sigma'}|
+|\frac{\partial^2\hat\zeta}{\partial\hat\sigma'^2}|)$ $\lesssim 1$, lead to
\begin{align*}
|\hat\zeta|\lesssim 1,\quad
|\frac{\partial\hat\zeta}{\partial\hat\sigma'}|\lesssim\frac{1}{\sqrt{1+\tau-\tau'}},\quad
|\frac{\partial^2\hat\zeta}{\partial\hat\sigma'^2}|\lesssim\frac{1}{1+\tau-\tau'}.
\end{align*}
These estimates immediately imply (\ref{ao16}). For (\ref{ao15}), we appeal to
$\int_0^\tau d\tau'e^{-\frac{\tau'}{2}}$ $\frac{1}{\sqrt{1+\tau-\tau'}}$ 
$\lesssim\frac{1}{\sqrt{\tau}}$, which is elementary.

\medskip

In view of (\ref{mr05}) and $\varepsilon^2\ll 1$, (\ref{ao13}) follows from inserting (\ref{ao14}), 
(\ref{ao09}), (\ref{ao15}), and (\ref{ao16}) into (\ref{ao03}).
\end{proof}


Finally, we collected all ingredients to establish Theorem \ref{thm:intermittency}.

\begin{proof}[Proof of Theorem \ref{thm:intermittency}.] $\mbox{}$ 
\medskip

\textit{Asymptotics of $ \E | \det F | $.} 
Applying Cauchy-Schwarz to the elementary inequality
$|\det F-\det\tilde F|$
$\lesssim(|F|+|\tilde F|)|F-\tilde F|$
$\lesssim|\tilde F||F-\tilde F|+|F-\tilde F|^2$ we obtain
\begin{align}\label{eqdet01}
\E | \det F - \det \tilde F |
\lesssim ( \E| \tilde F |^2 )^{ \frac{1}{2} }( \E | F - \tilde F |^2 )^{ \frac{1}{2} }
+\E | F - \tilde F |^2.
\end{align}
In order to establish the estimate (\ref{wr03bis}) on $\E | F - \tilde F |^2$, 
we apply the divergence to the decomposition (\ref{wr07}) to obtain
$$
\nabla \cdot a( { \rm id } + \nabla \tilde\phi )
= \nabla \cdot f
$$
so that by \eqref{cw14} and the stationarity of $\tilde\phi$
$$
\nabla \cdot a ( \nabla\phi - \nabla\tilde\phi ) = - \nabla \cdot f
\quad\mbox{and}\quad\mathbb{E}\nabla\phi=\mathbb{E}\nabla\tilde\phi=0.
$$ 
Hence because of the form (\ref{ho15}) of $a$ (which implies $\xi\cdot a\xi=|\xi|^2$), 
the energy estimate on the level of stationary gradient fields of
vanishing expectation takes the form
\begin{align*}
\E | \nabla\phi - \nabla\tilde\phi |^2 \leq \E | f |^2,
\end{align*}
which by the definitions (\ref{cw17b}) \& (\ref{cw17bis}) 
and the estimate (\ref{wr03}) yields (\ref{wr03bis}).
%
%

\medskip 

Inserting (\ref{wr03bis}) and (\ref{mr05}) 
into (\ref{eqdet01}), we obtain in the regime of $\varepsilon\ll 1$
\begin{align}\label{ho16}
\E | \det F - \det \tilde F |
\lesssim(\varepsilon+\varepsilon^2)\tilde\lambda\sim\varepsilon\tilde\lambda.
\end{align}
Note that this type of argument also implies \eqref{cw18}, see \cite[(103)]{CMOW}.
Using $\E|\det F|$ $\le\E | \det F - \det \tilde F |$ 
$+(\E(\det \tilde F - 1)^2)^\frac{1}{2}$ $+1$,
into which we insert (\ref{ho16}) and (\ref{mr04}), 
we obtain in the regime of $\varepsilon\ll 1$
\begin{align*}
\E|\det F|\lesssim \varepsilon\tilde\lambda+1,
\end{align*}
which in view of (\ref{cw18}), 
also in form of $\tilde\lambda\approx\lambda$, assumes the desired form of (\ref{mr02}).

\medskip

\textit{The failure of equi-integrability for $ ( \E | F |^2 )^{ - \frac{1}{2} } F $.}
We start from the elementary inequality
$$
\frac{1}{2} |F|^2 I ( |F|^2 \leq \frac{1}{4} r )
\leq |\tilde F|^2 I ( |\tilde F|^2 \leq r ) + |F - \tilde F|^2.
$$
Choosing $ r = \hat r \E|\tilde F|^2 $ with 
$\hat r$ as in (\ref{ao12})
and taking the expectation, we obtain from (\ref{wr03bis}) and (\ref{ao13}) in the regime $\varepsilon\ll 1$
$$
\E|F|^2 I ( |F|^2 \leq \frac{1}{4} \hat r \E|\tilde F|^2 )
\ll\mathbb{E}|\tilde F|^2+\tilde\lambda\stackrel{(\ref{mr05})}{\sim}\tilde\lambda.
$$
In view of (\ref{cw18}), this implies the desired (\ref{ao17}).
\end{proof}

\begin{proof}[Proof of Theorem \ref{th:forFabio}] Let us abbreviate
\begin{align*}
D_T = \int_T^{2T} dt | \nabla \bar X_t |^2
\end{align*}
throughout the proof.

\medskip

\textit{Failure of equi-integrability for $ ( \E D_T )^{-1} D_T $.} We will establish a more precise version of \eqref{so10}, namely
\begin{align}\label{so10precise}
\mathbb{E}D_TI(D_T\le(\varepsilon^2 \ln T)^\alpha \mathbb{E}D_T)\le\frac{1}{2}\mathbb{E}D_T,
\end{align}
for $\varepsilon^2\ln T\gg_\alpha 1$ and $\varepsilon\ll_\alpha 1$.

\medskip

As in the proof of Theorem \ref{thm:intermittency}, we start from the elementary inequality
$$
D_T I ( | D_T | \leq \frac{1}{2} r ) \leq \tilde D_T  I ( | \tilde D_T | \leq r ) + | D_T - \tilde D_T |.
$$
Choosing $ \tilde D_t = T | \tilde F_L |^2 $ and $ r = T \E | \tilde F_L |^2 \widehat{r} $ we obtain
\begin{align}\label{ff01}
\begin{aligned}
\E D_T I ( | D_T | \leq \frac{1}{2} T \E | \tilde F_L |^2 \widehat{r} ) & \leq T \E | \tilde F_L |^2  I ( | \tilde F_L |^2 \leq \E | \tilde F_L |^2 \widehat{r}  ) \\
& + \E | \int_T^{2T} dt | \nabla \bar X_t |^2 - \int_T^{2T} dt | \tilde F_L  |^2 |.
\end{aligned}
\end{align}
The last term in \eqref{ff01} may be estimated by
\begin{align*}
\E | \int_{T}^{2T} dt | \nabla \bar X_t |^2 - \int_T^{2T} dt | \tilde F_L  |^2 |
&\leq \E \int_T^{2T} dt | ( \nabla \bar X_t - \tilde F_L  ) \cdot ( \nabla \bar X_t + \tilde F_L  ) | \\
&\leq \left( \E \int_T^{2T} dt | \nabla \bar X_t  + \tilde F_L  |^2 \right)^{ \frac{1}{2} } \left( \E \int_T^{2T} dt | \nabla \bar X_t - \tilde F_L  |^2 \right)^{ \frac{1}{2} } .
\end{align*}
By fixing $ L^2 = T $, we can invoke \eqref{so11}, \eqref{so13} and \eqref{mr05}, to further estimate
\begin{align}\label{ff02}
\E | \int_T^{2T} dt | \nabla \bar X_t |^2 - \int_T^{2T} dt | \tilde F_L  |^2 |
&\lesssim \varepsilon \tilde\lambda_L T,
\end{align}
so that \eqref{ff01} implies
\begin{align*}
\E D_T I ( | D_T | \leq \frac{1}{2}T \E | \tilde F |^2 \widehat{r} )
\lesssim T \E | \tilde F_L |^2  I ( | \tilde F_L |^2 \leq \E | \tilde F_L |^2 \widehat{r}  ) + \varepsilon \tilde\lambda T.
\end{align*}
By virtue of \eqref{so11} and \eqref{mr05}, the last inequality and \eqref{ao13} imply \eqref{so10precise}.

\medskip

\textit{Asymptotics of $ \int_{T}^{2T} dt | \det \nabla \bar X_t | $.} Integrating \eqref{eqdet01} for $ F = \nabla \bar X $ in time, we obtain by Cauchy-Schwarz in time
\begin{align*}
\E \int_T^{2T} dt | \det \nabla \bar X_t - \det \tilde F_L |
&\lesssim \left( \E \int_T^{2T} dt | \tilde F_L |^2 \right)^{ \frac{1}{2} } \left( \E \int_{T}^{2T} dt | \nabla \bar X_t - \tilde F_L |^2 \right)^{ \frac{1}{2} } \\
&+ \E \int_{T}^{2T} dt | \nabla \bar X_t - \tilde F_L |^2,
\end{align*}
where as above $ L^2 = T $. Hence, by \eqref{so13} and \eqref{mr05} we obtain
\begin{align*}
\E \int_T^{2T} dt | \det \nabla \bar X_t - \det \tilde F_L | \lesssim ( \varepsilon + \varepsilon^2 ) \tilde\lambda_L  T \lesssim \varepsilon \tilde\lambda_L T .
\end{align*}
By triangle inequality and Cauchy-Schwarz we obtain
\begin{align*}
\E \int_T^{2T} dt | \det \nabla \bar X_t | 
\leq \E \int_T^{2T} dt | \det \nabla \bar X_t - \det \tilde F_L |
+ \sqrt{T} ( \E \int_T^{2T} dt ( \det \tilde F_L - 1 )^2 )^{ \frac{1}{2} }
+ T.
\end{align*}
Combing the previous two lines and \eqref{mr04} we estimate
\begin{align*}
\E \int_T^{2T} dt | \det \nabla \bar X_t | 
\leq \varepsilon \tilde\lambda_L  T + \varepsilon T + T.
\end{align*}
In the regime $ \varepsilon \ll 1$ and $ \varepsilon^2 \ln T \gg 1 $, we may use \eqref{so11} to conclude \eqref{ff03}.
\end{proof}

\appendix


\section{Identification of quadratic variation of the drivers}
\label{ref:quadratic-variation}\label{appendix:quad-var}

In this appendix, we identify the quadratic variations of $\nabla d \phi $ and $ \nabla^2 d \phi $.
More specifically, we diagonalize the positive semi-definite quadratic forms
$\diamond$ and $\bullet$ introduced in Definition \ref{defn:quad-var}.
In particular, we will learn that both are universal\footnote{i.~e.~independent of any parameter
like $L$ or $\varepsilon$}. This is an exercise in representation theory for the group of rotations,
which we reproduce for the convenience of the reader.
It is natural to start with $ \nabla d \phi $ in view of its
definition via $ d \psi $, see (\ref{wr01}), which is fully characterized by (\ref{in09}).


\subsection{Quadratic variation of $\nabla d\phi$}\label{ss:qv1}\label{ss:var-nabla-phi}

We will now identify $\diamond$ arising from (\ref{wr51}).
A less representation-theoretic argument for the contraction (\ref{wr38d})
was given in 
\cite[(26)]{SuppCMOW}. 
In view of the canonical isomorphism 
\begin{align*}
(\mbox{cotangent space})\otimes(\mbox{tangent space})
\cong\mbox{endomorphisms of tangent space},
\end{align*}
we (also) view $\diamond$ as a symmetric bilinear form on the latter.
By isotropy, $\diamond$ is invariant under the action of the rotation group
via conjugation of the endomorphisms: 
%
\begin{align}\label{ho28}
Q.G=QGQ^{-1}=QGQ^*\quad\mbox{or equivalently}\quad Q.(\xi\otimes\dot x)=Q^{-t}\xi\otimes Q\dot x.
\end{align}
This group action leaves the following decomposition (\ref{wr50})
of endomorphism space invariant.
%
%
This motivates to introduce the following basis on this four-dimensional space 
\begin{align}\label{wr82}
\begin{array}{ll}
E^1 :=e^1\otimes e_1-e^2\otimes e_2,\;
&E^2:=e^1\otimes e_2+e^2\otimes e_1,\\[1ex]
E^3:=e^1\otimes e_2-e^2\otimes e_1,\;
&E^4:=e^1\otimes e_1+e^2\otimes e_2.
\end{array}
\end{align}
Indeed, $\{E^1,E^2\}$ is a basis of the space of trace-free symmetric endomorphisms,
$E^3=J$ is a skew endomorphism\footnote{We denote the rotation by $\frac{\pi}{2}$
(in the positive direction) on tangent and cotangent space both with $J$.}, 
and $E^4={\rm id}$ is an isotropic endomorphism.
This choice is natural in many ways: 
$E^1,\hdots,E^4$ are orthogonal
w.~r.~t.~the inner product $:$ defined in (\ref{ho40}), more precisely
\begin{align}\label{ho41}
E^m:E^n=0\quad\mbox{unless}~ m=n\quad\mbox{and}\quad
E^m:E^m=2,
\end{align}
as used in the argument for (\ref{wr38d}).

\medskip

We claim that this basis is orthogonal also w.~r.~t.~$\diamond$, i.~e.
\begin{align}
E^m\diamond E^n=0\quad\mbox{unless}\quad m=n\quad\mbox{and}\quad\label{wr52}\\
2 E^1 \diamond E^1
= 2 E^2 \diamond E^2
= E^3 \diamond E^3
= 1
\quad\mbox{and}\quad
E^4 \diamond E^4=0.\label{wr53}
\end{align}

\medskip

We start with the first component in (\ref{wr50}) by establishing conformality:
\begin{align}\label{wr48}
E^1\diamond E^1=E^2\diamond E^2\quad\mbox{and}\quad E^1\diamond E^2=0.
\end{align}
We give an abstract argument for (\ref{wr48}) which
seems a bit pretentious here but is useful
when appealing to it in Subsection \ref{ss:var-nabla-nabla-phi}. 
Let us denote by $Q_\theta$ the rotation by $ \frac{1}{2} \theta$; 
the factor of $\frac{1}{2}$ in front of the angle is natural in view of 
$(\mbox{trace free symmetric})$ $=(\mbox{constant})\times(\mbox{reflection})$,
since angles between reflection lines should be measured mod $\pi$. In view of
\begin{align}\label{wr48bis}
\frac{dQ_{\theta}}{d\theta}\Big|_{\theta=0}= \frac{1}{2} J\quad\mbox{and}\quad
\frac{dQ_{\theta}^{-1}}{d\theta}\Big|_{\theta=0}=\frac{dQ_{-\theta}}{d\theta}\Big|_{\theta=0}
=- \frac{1}{2} J,
\end{align}
the generator of the action (\ref{ho28}) is given by
\begin{align*}
\frac{d}{d\theta}\Big|_{\theta=0}Q_\theta.G= \frac{1}{2} (JG-GJ).
\end{align*}
The definitions (\ref{wr82}) of $E^1,E^2$ are made such that they are related by the generator:
\begin{align}\label{ho29}
\frac{d}{d\theta}\Big|_{\theta=0}Q_\theta.E^1=E^2
\end{align}
By the action properties $Q_{\theta}^{-1}.G=Q_{-\theta}.G$ and 
$Q_{\theta+\theta'}.G$ $=Q_\theta.Q_{\theta'}.G$,
(\ref{ho29}) entails
\begin{align}\label{ho30}
\frac{d}{d\theta}\Big|_{\theta=0}Q_\theta.E^2=-E^1\quad\mbox{and}\quad
\frac{d^2}{d\theta^2}\Big|_{\theta=0}Q_\theta.E^1= - E^1.
\end{align}
We obtain the second part of (\ref{wr48}) from applying $\frac{d}{d\theta}\big|_{\theta=0}$
to $Q_\theta E^1\diamond Q_\theta E^1=E^1\diamond E^1$, which follows from the invariance
of $\diamond$, and then appealing to (\ref{ho29}) and the symmetry of $\diamond$. 
Likewise, we obtain the first part of (\ref{wr48}) 
from applying $\frac{d^2}{d\theta^2}\big|_{\theta=0}$ 
and appealing to (\ref{ho29}) and (\ref{ho30}).

\medskip

We now consider the mixed terms w.~r.~t.~(\ref{wr50}). Since $E^3$ is a fixed point 
under conjugation (with rotations),
$G\mapsto E^3\diamond G$ is an invariant linear form on the (dual of) endomorphism space,
and thus is a multiple of the trace. Hence the first and the second components of (\ref{wr50})
are orthogonal under $\diamond$:
\begin{align}\label{wr30}
E^1\diamond E^3=E^2\diamond E^3=0.
\end{align}
Next, we argue that $E^4$ is in the null space of $\diamond$. 
To this purpose, we consider the Fourier transform\footnote{note
that its argument $k$ naturally is a cotangent vector} of (\ref{wr01}), that is,
\begin{align}\label{ho20}
\tilde\lambda {\mathcal F}d\phi\otimes k= i{\mathcal F}\psi J
+{\mathcal F}d\sigma\otimes Jk.
\end{align}
Contracting (\ref{ho20}) with $k\otimes k^*$ by the skew symmetry of $J$ yields
$\tilde\lambda |k|^2 k.{\mathcal F}d\phi=0$ and thus\footnote{since $\tr\dot x\otimes\xi=\xi.\dot x$} $\tr {\mathcal F}d\phi\otimes k=0$,
which undoing the Fourier transform implies the first item in (\ref{ho36}).
%
%
By definition (\ref{wr82}) this means
\begin{align}\label{wr31bis}
E^4.\nabla d\phi=0.
\end{align}
In view of definition (\ref{wr51}), this implies
\begin{align}\label{wr31}
E^4 \diamond G = 0\quad\mbox{for all}\;G.
\end{align}

\medskip

Because of stationarity, the covariation of
the pair $(\partial_id\phi^k,\partial_jd\phi^l)$ is invariant under permuting $i$ and $j$.
In particular, the covariation of the pairs
$(\partial_1d\phi^1,\partial_2d\phi^2)$ and $(\partial_2d\phi^1,\partial_1d\phi^2)$
coincide. By definition (\ref{wr51}) this translates into
\begin{align*}
e^1 \otimes e_1 \diamond e^2 \otimes e_2
= e^1 \otimes e_2 \diamond e^2 \otimes e_1.
\end{align*}
Using (\ref{wr82}) to express the $e^i\otimes e_j$'s in terms 
of the $E^n$'s yields
$(E^4+E^1)\diamond(E^4-E^1)$ $=(E^2$ $+E^3)\diamond(E^2-E^3)$,
which by (\ref{wr30}) and (\ref{wr31}) reduces to
\begin{align}\label{wr35}
E^3\diamond E^3=E^1\diamond E^1+E^2\diamond E^2.
\end{align}

\medskip

It remains to identify $E^3\diamond E^3$. To this purpose, we contract (\ref{ho20})
with $J(-ik)\otimes k^*$ to the effect of $\tilde\lambda |k|^2 J(-ik).{\mathcal F}d\phi$
$=i{\mathcal F}d\psi JJ(-ik).k^*$; since $J^2=-{\rm id}$, the latter coincides with  
$-|k|^2{\mathcal F}d\psi$, so that we obtain
$\tilde\lambda \tr {\mathcal F}d\phi\otimes J(-ik)$
$=-{\mathcal F}d\psi$, which in real space amounts to the second item in (\ref{ho36}).
%
%
By definition (\ref{wr82}) we have\footnote{
Since here $J=e_1\otimes e^2-e_2\otimes e^1$, we have $JF=F^te_1\otimes e^2
-F^t e_2 \otimes Fe^1$
and thus $ \tr JF =e_1.Fe^2-e_2.Fe^1$
$= (e^2\otimes e_1-e^1\otimes e_2) . F=- E^3.F$} $\tr JF=-E^3.F$; 
hence the second item in (\ref{ho36}) assumes the form of
\begin{align}\label{ho24}
\tilde\lambda E^3.\nabla d\phi=d\psi.
\end{align}
In view of
definition (\ref{wr51}), this means $E^3\diamond E^3\,d\tilde\lambda^2=[d\psi\,d\psi]$
so that by the normalization of the latter,~cf.~\eqref{in09}, we thus obtain
%
%
%
\begin{align}\label{wr39}
E^3\diamond E^3=1.
\end{align}
Now (\ref{wr52}) and (\ref{wr53}) follow from (\ref{wr48}), (\ref{wr30}),
(\ref{wr31}), (\ref{wr35}), and (\ref{wr39}).


\subsection{Quadratic variation of $\nabla^2d\phi$}\label{ss:var-nabla-nabla-phi}

In this subsection we identify the bilinear form $ \bullet $ 
introduced in definition (\ref{wr80}). The contraction \eqref{ho13}
of $\bullet$ was already identified in \cite[(27)]{SuppCMOW}.

\medskip

Recall that by definition (\ref{wr80}), $\bullet$ is a positive semi-definite
symmetric bilinear form on
the 8-dimensional tensor space
\begin{align}\label{wr84}
\mbox{cotangent space}\otimes\mbox{tangent space}\otimes\mbox{tangent space}.
\end{align}
By isotropy, $ \bullet $ is invariant w.~r.~t.~the natural action of rotations $Q$ 
characterized by
\begin{align}\label{wr78}
Q.(\xi \otimes \dot x\otimes\dot y) :=
(Q^{-t}\xi)\otimes(Q\dot x)\otimes(Q\dot y)
\end{align}
and extended by linearity to (\ref{wr84}). Analogous to Subsection \ref{ss:qv1},
there are natural invariant subspace of (\ref{wr84}) under (\ref{wr78}), namely
\begin{align}
\mbox{trace-free symmetric}&:=\{\textit{\textbf{E}}|\textit{\textbf{E}}^{ij}_k\;
\mbox{invariant under permutations of}\;i,j,k\;\mbox{and}\;\textit{\textbf{E}}^{ij}_i=0\},
\label{ho26}\\
\mbox{isotropic}&:=\{\xi\otimes(e_1\otimes e_1+e_2\otimes e_2)|\xi\;\mbox{cotangent vector}\}
\label{ho27},
\end{align}
which obviously have trivial intersection.
Clearly, (\ref{ho27}) is two-dimensional, and a basis is given by $\{\textit{\textbf{E}}^3,
\textit{\textbf{E}}^4\}$, see (\ref{wr85}) since these tensors can be rewritten as
\begin{align}\label{wr77bis}
\textit{\textbf{E}}^3=e^1 \otimes( e_1\otimes e_1+e_2\otimes e_2)
\quad\mbox{and}\quad
\textit{\textbf{E}}^4=e^2 \otimes( e_2\otimes e_2+e_1\otimes e_1).
\end{align}
As for the dimension of (\ref{ho26}), we note that
the space of (totally) symmetric three-tensors is isomorphic to polynomials (in two variables) 
that are homogeneous of degree 3, and thus is of dimension 4; 
the trace condition means that these polynomials are harmonic,
which reduces the dimension to 2. Hence $\{\textit{\textbf{E}}^1,\textit{\textbf{E}}^2\}$,
see (\ref{wr85}), is a basis of this space\footnote{Note that by the evident
total symmetry of $\textit{\textbf{E}}^1$, $\textit{\textbf{E}}^2$, only one of the trace
conditions has to be checked; it is convenient to contract the first two indices.}.

\medskip

We shall argue that $ \bullet $ is diagonal w.~r.~t.
\begin{align}\label{wr85}
\begin{array}{cl@{\hspace{0.0ex}}l}
\textit{\textbf{E}}^1& \coloneqq \phantom{1}-&e^1\otimes e_1\otimes e_1
+e^2 \otimes e_1\otimes e_2 + e^2\otimes e_2\otimes e_1 + e^1 \otimes e_2\otimes e_2,\\
\textit{\textbf{E}}^2& \coloneqq \phantom{1} -  & e^2 \otimes e_2\otimes e_2  +  e^1 \otimes e_2\otimes e_1  + 
e^1 \otimes e_1\otimes e_2  +  e^2 \otimes e_1\otimes e_1,\\
\textit{\textbf{E}}^3& \coloneqq &e^1 \otimes e_1\otimes e_1+ e^1 \otimes  e_2\otimes e_2,\\
\textit{\textbf{E}}^4& \coloneqq \phantom{1} &e^2 \otimes e_2\otimes e_2 
+ e^2 \otimes  e_1\otimes e_1,\\
\textit{\textbf{E}}^5& \coloneqq \phantom{+} 2&e^1 \otimes e_1\otimes e_1 
+ e^2 \otimes  e_1\otimes e_2 + e^2 \otimes  e_2\otimes e_1,\\
\textit{\textbf{E}}^6& \coloneqq\phantom{+} 2& e^2 \otimes e_2\otimes e_2 +   e^1 \otimes  e_2\otimes e_1 
+ e^1 \otimes  e_1\otimes e_2.
\end{array}
\end{align}
Let us continue to explain why this choice is natural. 
Clearly, (\ref{wr85}) comes in pairs
related by rotation\footnote{
We recall that $J$ is characterized by $Je_1=e_2$ and $Je_2=-e_1$, 
so that its transpose is given by $J^te^2=e^1$ and $J^te^1=-e^2$, 
and therefore the inverse of its transpose behaves like $J$ itself:
$J^{-t}e_1=e_2$ and $^{-t}Je_2=-e_1$.
As a consequence, the action $J.$ on $e^i\otimes e_j\otimes e_k$
replaces an index $1$ by $2$, and an index $2$ by $1$ up to a minus sign.}
by the angle $\frac{\pi}{2}$
\begin{align}\label{wr76}
\textit{\textbf{E}}^2= J.\textit{\textbf{E}}^1,\quad \textit{\textbf{E}}^4
= J.\textit{\textbf{E}}^3,
\quad \textit{\textbf{E}}^6= J.\textit{\textbf{E}}^5.
\end{align}
Also the pair 
$\{\textit{\textbf{E}}^5,\textit{\textbf{E}}^6\}$ is natural: Comparing
(\ref{wr85}) to (\ref{wr82}), we realize\footnote{We recall that the subscript sym
indicates that the identity holds modulo symmetrization of the last two factors
in the triple tensor product.}
\begin{align*}
\textit{\textbf{E}}^5=_{sym} 2 \, E^4\otimes e_1\quad\mbox{and}\quad
\textit{\textbf{E}}^6=_{sym} 2 \,  E^4\otimes e_2,
\end{align*}
which implies
\begin{align*}
\textit{\textbf{E}}^5.\nabla^2d\phi= 2\, \partial_1E^4.\nabla d\phi\quad\mbox{and}\quad
\textit{\textbf{E}}^6.\nabla^2d\phi= 2 \, \partial_2E^4.\nabla d\phi.
\end{align*}
Hence we learn from (\ref{wr31bis}) that
%
\begin{align}\label{wr77}
\textit{\textbf{E}}^5\bullet \textit{\textbf{G}} = \textit{\textbf{E}}^6\bullet \textit{\textbf{G}} = 0\quad\mbox{for all}\; \textit{\textbf{G}}.
\end{align}

\medskip

These 6 elements indeed form a basis of the 6-dimensional 
sub-space relevant for $\bullet$
\begin{align*}
(\mbox{cotangent space})\otimes(\mbox{tangent space})\otimes_{sym}(\mbox{tangent space}),
\end{align*}
which follows from expressing the standard basis as
\begin{equation}\label{wr86}
\begin{aligned}
4 \, e^1 \otimes e_1 \otimes e_1 &= - \textit{\textbf{E}}^1 + \phantom{3\,} \textit{\textbf{E}}^3 + \textit{\textbf{E}}^5, \\
4 \, e^1 \otimes  e_2 \otimes e_2 &= \phantom{-} \textit{\textbf{E}}^1 + 3\, \textit{\textbf{E}}^3 - \textit{\textbf{E}}^5, \\
4 \, e^2 \otimes  e_1 \otimes e_2 
=_{sym} 2 \, ( e^2 \otimes e_1 \otimes e_2 + e^2 \otimes  e_2 \otimes e_1) 
&= \phantom{-} \textit{\textbf{E}}^1 - \phantom{3\,} \textit{\textbf{E}}^3 + \textit{\textbf{E}}^5.
\end{aligned}
\end{equation}
Applying $J.$ to these identities we obtain by definition (\ref{wr78})
and the relation (\ref{wr76}) 
\begin{equation}\label{wr86bis}
\begin{aligned}
4 \, e^2 \otimes e_2 \otimes e_2 &= - \textit{\textbf{E}}^2 + \phantom{3\,} \textit{\textbf{E}}^4 + \textit{\textbf{E}}^6, \\
4 \, e^2 \otimes  e_1 \otimes e_1 &= \phantom{-} \textit{\textbf{E}}^2 + 3\, \textit{\textbf{E}}^4 - \textit{\textbf{E}}^6, \\ 
4 \, e^1 \otimes  e_2 \otimes e_1 
&=_{sym} 
\phantom{-} \textit{\textbf{E}}^2 - \phantom{3\,} \textit{\textbf{E}}^4 + \textit{\textbf{E}}^6.
\end{aligned}
\end{equation}

\medskip

We now give the argument that indeed
\begin{align}\label{wr83}
\textit{\textbf{E}}^m\bullet \textit{\textbf{E}}^n=0\quad\mbox{unless}\;m=n\quad\mbox{and}\quad
\textit{\textbf{E}}^m\bullet \textit{\textbf{E}}^m=\frac{1}{2}\quad\mbox{for}\;m\le 4,
\end{align}
where $\textit{\textbf{E}}^m\bullet \textit{\textbf{E}}^m=0$ for $m\in\{5,6\}$ by (\ref{wr77}). 
We first leverage the invariance of $\bullet$ under the natural action of rotations,
~cf.~(\ref{wr78}). By the argument of Subsection \ref{ss:qv1}
based on the action of rotations on a two-dimensional subspace, which now is (\ref{ho26}), 
and the parameterization of the rotation group via $Q_\theta$, which
now is the rotation by the angle $ -\frac{\theta}{3} $ (as opposed to $\frac{1}{2}\theta$ before), 
so that the first item in (\ref{wr76}) assumes the form of (\ref{ho29}),
and the invariance of a symmetric bilinear form, which now is $\bullet$, we obtain
conformality
\begin{align*}
\textit{\textbf{E}}^1\bullet \textit{\textbf{E}}^1
=\textit{\textbf{E}}^2\bullet \textit{\textbf{E}}^2
\quad\mbox{and}\quad
\textit{\textbf{E}}^1\bullet \textit{\textbf{E}}^2=0,
\end{align*}
and the same statement for $\{1,2\}$ replaced by $\{3,4\}$ and a rotation by 
$ \theta $ instead of $ -\frac{\theta}{3} $.

\medskip

We now give a similar, slightly more involved, argument for
\begin{align}\label{wr79}
\textit{\textbf{E}}^m\bullet \textit{\textbf{E}}^n=0
\quad\mbox{for}\;m\in\{1,2\}\;\mbox{and}\;n\in\{3,4\},
\end{align}
which amounts to showing that the rotation-invariant sub-spaces
(\ref{ho26}) and (\ref{ho27}) are orthogonal w.~r.~t.~$\bullet$.
Using (\ref{wr78}) for $Q=Q_\theta$, applying $\frac{d}{d\theta} \big|_{\theta=0}$,
and appealing to Leibniz' rule and to (\ref{wr48bis}), we obtain\footnote{it
is crucial that the derivative is applied to the action $Q_\theta.$, not just the rotation
$Q_\theta$}
%
\begin{align*}
\frac{d}{d\theta} \Big|_{\theta = 0} Q_\theta.
= -  J^t \otimes  {\rm id} \otimes {\rm id}
+ { \rm id } \otimes J \otimes {\rm id}
+{ \rm id } \otimes  {\rm id} \otimes J
\end{align*}
to which we apply $ J . $, see \eqref{wr78}, which by $ J^2 = - { \rm id } $ and thus by $ J^{-t} = - J^t $ yields
\begin{align*}
J.\frac{d}{d\theta} \Big|_{\theta = 0} Q_\theta.
= - {\rm id} \otimes  J \otimes J
+ J^t \otimes {\rm id}\otimes J
+ J^t \otimes J \otimes {\rm id}.
\end{align*}
We use this to formulate two identities on
${\rm id}+J.\frac{d}{d\theta} \big|_{\theta=0} Q_\theta.$, namely
%
\begin{align*}
({\rm id}+J.\frac{d}{d\theta} \Big|_{\theta = 0} Q_\theta.)
&= {\rm id}\otimes {\rm id} \otimes {\rm id}
-  {\rm id} \otimes J \otimes J
+ J^t \otimes  {\rm id} \otimes J
+ J^t \otimes J \otimes {\rm id},
\end{align*}
which entails
\begin{align*}
({\rm id}+J.\frac{d}{d\theta} \Big|_{\theta = 0} Q_\theta.)
& \big( {\rm id} \otimes {\rm id}\otimes{\rm id}
+{\rm id} \otimes J\otimes J \big) =0.
\end{align*}
Applying these identities to $e^1\otimes e_1\otimes e_1$
we learn by definition (\ref{wr85}) and $ Je_1 = e_2 $, $ J^t e^1 = - e^2 $
\begin{align*}
\textit{\textbf{E}}^1\in {\rm im} \big({\rm id}+J.\frac{d}{d\theta} \Big|_{\theta = 0} Q_\theta.\big)
\quad\mbox{and}\quad
\textit{\textbf{E}}^3\in {\rm ker}\big({\rm id}+J.\frac{d}{d\theta} \Big|_{\theta = 0} Q_\theta.\big).
\end{align*}
Since $J.$ commutes with 
$ {\rm id}+J.\frac{d}{d\theta} \big|_{\theta=0} Q_\theta. $ this entails by (\ref{wr76})
\begin{align*}
\textit{\textbf{E}}^2\in {\rm im} \big({\rm id}+J.\frac{d}{d\theta} \Big|_{\theta = 0} Q_\theta.\big)
\quad\mbox{and}\quad
\textit{\textbf{E}}^4\in {\rm ker}\big({\rm id}+J.\frac{d}{d\theta} \Big|_{\theta = 0} Q_\theta.\big).
\end{align*}
This yields (\ref{wr79}) since 
${\rm id}+J.\frac{d}{d\theta} \big|_{\theta=0} Q_\theta.$ is symmetric w.~r.~t.~$\bullet$,
which follows from the fact that $J.$ is skew 
(since $J.$ preserves $\bullet$ and by $J^2=- {\rm id}$) 
and that $\frac{d}{d\theta} \big|_{\theta=0} Q_\theta.$ is skew (by Leibniz' rule for $\bullet$). 
Hence (\ref{wr79}), and therefore the first item in (\ref{wr83}), are established.

\medskip

Towards the second item in (\ref{wr83}), 
we now claim that by an additional symmetry of $\bullet$ we have
\begin{align}\label{wr81}
\textit{\textbf{E}}^1\bullet \textit{\textbf{E}}^1 - \textit{\textbf{E}}^3 \bullet \textit{\textbf{E}}^3 = \textit{\textbf{E}}^2 \bullet \textit{\textbf{E}}^2 - \textit{\textbf{E}}^4\bullet \textit{\textbf{E}}^4 = 0
\end{align}
so that $ \textit{\textbf{E}}^1\bullet \textit{\textbf{E}}^1 = \textit{\textbf{E}}^2 \bullet \textit{\textbf{E}}^2 = \textit{\textbf{E}}^3 \bullet \textit{\textbf{E}}^3 = \textit{\textbf{E}}^4\bullet \textit{\textbf{E}}^4 $.
Indeed, by stationarity of $d\phi$, which implies that
$[\partial_i\partial_jd\phi\,\partial_k\partial_ld\phi]$
is invariant under permuting $i,j,k,l$,
and by definition (\ref{wr80}) of $\bullet$, the latter
enjoys the additional symmetry 
\begin{equation}\label{wr822}
(\xi \otimes e_i\otimes e_j ) \bullet (\xi' \otimes e_k\otimes e_l )
\quad \mbox{is invariant under permuting}\;i,j,k,l.
\end{equation}
Rewriting the first item in (\ref{wr85}) as
\begin{align*}
\textit{\textbf{E}}^1=-e^1\otimes(e_1\otimes e_1-e_2\otimes e_2)
+e^2\otimes (e_1\otimes e_2+e_2\otimes e_1),
\end{align*}
so that together with the first item in (\ref{wr77bis}) we have
\begin{align*}
\textit{\textbf{E}}^1+\textit{\textbf{E}}^3&=_{sym}\phantom{+}2e^1\otimes e_2\otimes e_2
+2e^2\otimes e_1\otimes e_2,\nonumber\\
\textit{\textbf{E}}^1-\textit{\textbf{E}}^3&=_{sym}-2e^1\otimes e_1\otimes e_1
+2e^2\otimes e_1\otimes e_2,
\end{align*}
we obtain by the symmetry of $\bullet$
\begin{eqnarray}\label{ho22}
\textit{\textbf{E}}^1 \bullet \textit{\textbf{E}}^1 - \textit{\textbf{E}}^3 \bullet \textit{\textbf{E}}^3
&=& - 4 ( e^1 \otimes e_1 \otimes e_1 ) \bullet ( e^1 \otimes  e_2 \otimes e_2 ) 
- 4 ( e^2 \otimes e_1 \otimes e_2 ) \bullet ( e^1 \otimes e_1 \otimes e_1 )\nonumber \\
&& + 4 ( e^1 \otimes e_2 \otimes e_2 ) \bullet ( e^2 \otimes e_1 \otimes e_2 ) 
+ 4 ( e^2 \otimes e_1 \otimes e_2 ) \bullet ( e^2 \otimes e_1 \otimes e_2 ).
\end{eqnarray}
We now appeal to the invariance of $\bullet$ under (\ref{wr78}) with
$Q=J$, in order to rewrite the two last terms in (\ref{ho22}) as
\begin{align*}
( e^1 \otimes e_2 \otimes e_2 ) \bullet ( e^2 \otimes e_1 \otimes e_2 ) 
&= (e^2 \otimes e_1 \otimes e_1 ) \bullet ( e^1 \otimes e_2 \otimes e_1 ),\nonumber\\
( e^2 \otimes e_1 \otimes e_2 ) \bullet ( e^2 \otimes e_1 \otimes e_2 )
&= ( e^1 \otimes e_2 \otimes e_1 ) \bullet ( e^1 \otimes e_2 \otimes e_1 ).
\end{align*}
Hence we learn from
(\ref{wr822}) that the first and last, and the second and third term
in (\ref{ho22}) cancel. This establishes the first equality in (\ref{wr81}). 
In view of (\ref{wr76}), the second equality follows by invariance of $ \bullet $ under the action of $J$.

\medskip

In order to conclude the proof of (\ref{wr83}), we argue
\begin{align}\label{wr88}
\textit{\textbf{E}}^3\bullet \textit{\textbf{E}}^3
+\textit{\textbf{E}}^4\bullet \textit{\textbf{E}}^4=1,
\end{align}
which by (\ref{wr76}) implies $ \textit{\textbf{E}}^3\bullet \textit{\textbf{E}}^3 
= \textit{\textbf{E}}^4\bullet \textit{\textbf{E}}^4 = \frac{1}{2} $. 
To this purpose we introduce
\begin{align}\label{ho25}
\begin{array}{cl@{\hspace{0.0ex}}l}
\textit{\textbf{E}}^7& \coloneqq - & 2 e^1 \otimes  e_2 \otimes e_2 
+ e^2 \otimes e_1\otimes e_2 
+ e^2 \otimes e_2 \otimes e_1, \\
\textit{\textbf{E}}^8& \coloneqq - & 2 e^2 \otimes e_1\otimes e_1  + e^1 \otimes e_2\otimes e_1 
+ e^1 \otimes e_1\otimes e_2,
\end{array}
\end{align}
which are defined such that, see (\ref{wr82}),
\begin{align*}
\textit{\textbf{E}}^7=_{sym}-2 E^3\otimes e_2\quad\mbox{and}\quad
\textit{\textbf{E}}^8=_{sym}-2 E^3\otimes e_1.
\end{align*}
This results in
\begin{align*}
\textit{\textbf{E}}^7.\nabla^2d\phi=2\partial_2 E^3.\nabla d\phi\quad\mbox{and}\quad
\textit{\textbf{E}}^8.\nabla^2d\phi=-2\partial_1 E^3.\nabla d\phi,
\end{align*}
which by (\ref{ho24}) implies
\begin{align*}
\tilde\lambda\textit{\textbf{E}}^7.\nabla^2d\phi=-2\partial_2 d\psi\quad\mbox{and}\quad
\tilde\lambda\textit{\textbf{E}}^8.\nabla^2d\phi=-2\partial_1 d\psi.
\end{align*}
Hence the characterization (\ref{qvarpsi}) of $[\nabla d\psi\cdot\nabla d\psi]$
translates by definition (\ref{wr80}) of $\bullet$ into
\begin{align}\label{wr87}
\textit{\textbf{E}}^7\bullet \textit{\textbf{E}}^7+\textit{\textbf{E}}^8\bullet \textit{\textbf{E}}^8=4.
\end{align}
Because of the representation, cf.~(\ref{wr85}) and (\ref{ho25}),
\begin{align*}
\textit{\textbf{E}}^7=\textit{\textbf{E}}^5-2\textit{\textbf{E}}^3\quad\mbox{and}\quad \textit{\textbf{E}}^8=\textit{\textbf{E}}^6-2\textit{\textbf{E}}^4
\end{align*}
and appealing to (\ref{wr77}), (\ref{wr87}) reduces to (\ref{wr88}).

\section*{Acknowledgements}

The authors thank Princeton University for its warm hospitality in Fall 2023, where this work was started.

\end{document}